\newtheorem{thm}{Theorem}[section]
\newtheorem{lem}{Lemma}[section]
\newtheorem{cor}[lem]{Corollary}
\newtheorem{prop}[thm]{Proposition}
\newtheorem{rem}[thm]{Remark}
\numberwithin{equation}{section}
\newcommand{\abs}[1]{\left\vert#1\right\vert}
\newcommand{\eps}{\varepsilon}
\newcommand{\E}{\mathbf{E}\,}
\newcommand{\R}{\mathbf{R}}
\newcommand{\im}{\mathrm{Im}\;\!}
\newcommand{\Tr}{\mathrm{Tr}\;\!}
\newcommand{\q}{\quad}
\newenvironment{Proof of}{\removelastskip\par\medskip
\noindent{\em Proof of} \rm}{\penalty-20\null\hfill$\square$\par\medbreak}
\def\be{\begin{equation}}
\def\en{\end{equation}}
\def\bee{\begin{eqnarray*}}
\def\ene{\end{eqnarray*}}
\def\E{{\bf E}}
\def\R{{\mathbb R}}
\def\Tr{{\rm Tr}\,}
\def\Im{{\rm Im}\,}
\def\<{\left<}
\def\>{\right>}
\def\1{{\bf 1}}
\def\4{\kern1pt}
\begin{document}
\bibliographystyle{}

\vspace{1in}
 \date{}
\title{\bf Rate of  Convergence of the Empirical Spectral Distribution Function to the Semi-Circular Law}

\author{{\bf F. G\"otze}\\{\small Faculty of Mathematics}
\\{\small University of Bielefeld}\\{\small Germany}
\and {\bf A. Tikhomirov}$^{1}$\\{\small Department of Mathematics
}\\{\small Komi Research Center of Ural Division of RAS,}\\{\small Syktyvkar State University}
\\{\small  Russia}
}
\maketitle
 \footnote{$^1$Research supported   by SFB 701 ``Spectral Structures and Topological Methods in Mathematics'' University of Bielefeld.
Research supported   by grants  RFBR N~14-01-00500  and by Program of Fundamental Research Ural Division of RAS, Project ¹ 12-P-1-1013}

\date{}

\maketitle
\begin{abstract}
Let $\mathbf X=(X_{jk})_{j,k=1}^n$ denote a  Hermitian random matrix with
entries $X_{jk}$, which are independent for $1\le j\le  k\le n$. We consider
the rate of convergence of the empirical  spectral distribution function of
the matrix $\mathbf W=\frac1{\sqrt n}\mathbf X$ to the semi-circular law assuming that $\E X_{jk}=0$,
$\E X_{jk}^2=1$ and that 
\begin{equation}
\sup_{n\ge1}\sup_{1\le j,k\le n}\E|X_{jk}|^4=: \mu_4<\infty
\text{  and  }
\sup_{1\le j,k\le n}|X_{jk}|\le Dn^{\frac14}.
\end{equation}
By means of a recursion argument it is shown that the
 Kolmogorov distance between the empirical spectral
distribution of the Wigner matrix $\mathbf W$
and the  semi--circular law is of order $O(n^{-1}\log^{5}n)$ with high probability.
\end{abstract}
\maketitle


\section{Introduction}
\setcounter{equation}{0}

 The present paper is a continuation of the paper \cite{GT:2014}, where we   proved   under the assumptions of Theorem \ref{main}
below a non improvable bound for the Kolmogorov distance between the expected spectral distribution function of Wigner matrices  and the semicircular 
distribution function. In this paper we estimate the $L_p$-norm of the Kolmogorov distance between the empirical spectral distribution function of Wigner matrices  
and the semicircular distribution function, for $1\le p\le C\log n$.

Consider a family $\mathbf X = \{X_{jk}\}$, $1 \leq j \leq k \leq n$,
of independent real random variables defined on some probability space
$(\Omega,{\textfrak M},\Pr)$, for any $n\ge 1$. Assume that $X_{jk} = X_{kj}$, for
$1 \leq k < j \leq n$, and introduce the symmetric matrices
\begin{displaymath}\notag
 \mathbf W = \ \frac{1}{\sqrt{n}} \left(\begin{array}{cccc}
 X_{11} &  X_{12} & \cdots &  X_{1n} \\
 X_{21} & X_{22} & \cdots &  X_{2n} \\
\vdots & \vdots & \ddots & \vdots \\
 X_{n1} &  X_{n2} & \cdots &  X_{nn} \\
\end{array}
\right).
\end{displaymath}

The matrix $\mathbf W$ has a random spectrum $\{\lambda_1,\dots,\lambda_n\}$ and an
associated spectral empirical distribution function
$\mathcal F_{n}(x) = \frac{1}{n}\ {\rm card}\,\{j \leq n: \lambda_j \leq
x\}, \quad x \in \R$.
Averaging over the random values $X_{ij}(\omega)$, define the expected
(non-random) empirical distribution functions
$ F_{n}(x) = \E\,\mathcal F_{n}(x)$.
Let $G(x)$ denote the semi-circular distribution function with density
$g(x)=G'(x)=\frac1{2\pi}\sqrt{4-x^2}\mathbb I_{[-2,2]}(x)$, where $\mathbb I_{[a,b]}(x)$
denotes the indicator--function of the interval $[a,b]$. 
 The rate of convergence to the semi-circular law has been studied by several authors.  For a  detailed discussion of  previous results see  \cite{GT:2014}.
 
 We shall estimate  the  Kolmogorov distance between $\mathcal F_n(x)$ and
 the distribution function
 $G(x)$,
 $\Delta_n^*:=\sup_x|\mathcal F_n(x)-G(x)|$. 

The main result of this paper is the following
\begin{thm}\label{main} Let $\E X_{jk}=0$, $\E X_{jk}^2=1$.  Assume that
there exists a constant $\mu_4>0$ such that   
\begin{equation}\label{moment}
 \sup_{n\ge1}\,\sup_{1\le j,k\le n}\E|X_{jk}|^4=: \mu_4<\infty.
\end{equation}
 Furthermore, assume that there exists a constant $D$ such that for all $n$
\begin{equation}\label{trun}
 \sup_{1\le j,k\le n}|X_{jk}|\le Dn^{\frac14}.
\end{equation}

Then,   there exist  positive  constants $C=C(D,\mu_4)$ and $C'=C'(D,\mu_4)$ depending on
 $D$ and $\mu_4$ only
such that, for $p\le C'\log n$
\begin{equation} \label{kolmog}
\E^{\frac1p}{\Delta_n^*}^p\le Cn^{-1}\log^4 n.
\end{equation}

\end{thm}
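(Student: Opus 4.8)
The strategy is the standard Stieltjes-transform/recursion approach, now carried out on the level of $L_p$-norms rather than expectations. Let $s(z)=\int (x-z)^{-1}\,dG(x)$ be the Stieltjes transform of the semicircular law and $m_n(z)=\frac1n\Tr(\mathbf W-zI)^{-1}$ the Stieltjes transform of $\mathcal F_n$, with $z=u+iv$. Recall that $s(z)$ satisfies $s^2(z)+zs(z)+1=0$. Using the resolvent identity and Schur complement formula applied to the $j$-th diagonal entry of the resolvent $\mathbf R=(\mathbf W-zI)^{-1}$, one obtains the self-consistent equation
\begin{equation*}
m_n(z)+\frac1{z+m_n(z)} = \frac1n\sum_{j=1}^n \frac{\varepsilon_j}{(z+m_n(z))(\ldots)} =: \delta_n(z),
\end{equation*}
where each $\varepsilon_j$ collects the fluctuation terms $\frac1{\sqrt n}X_{jj}$, the quadratic form $\frac1n\sum_{k,l\ne j}X_{jk}X_{jl}R^{(j)}_{kl}-\frac1n\Tr\mathbf R^{(j)}$, and the negligible rank-one correction $\frac1n(m_n-m_n^{(j)})$. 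The first step is therefore to bound $\E|\delta_n(z)|^p$ by expanding the $p$-th power, applying Rosenthal's/Burkholder's inequality to the martingale-difference or independent-sum structure in the quadratic forms, and using the moment assumption \eqref{moment} together with the truncation bound \eqref{trun}; the truncation at $Dn^{1/4}$ is exactly what keeps the higher moments under control when $p\sim\log n$. The diagonal entries of $\mathbf R^{(j)}$ are controlled by $\Im R^{(j)}_{kk}\le v^{-1}$ deterministically, and by $\frac1n\sum_k|R^{(j)}_{kk}|^2\le \frac1v\Im m_n^{(j)}$, which links the error to $\Im m_n$ itself.

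The second step is to turn the self-consistent equation into a bound on $|m_n(z)-s(z)|$. Writing the quadratic equation for $m_n$ as $(m_n-s)(m_n-\tilde s)=(z+m_n)\delta_n$ for the appropriate root structure, and using the stability estimate $|z+s(z)|=|s(z)|^{-1}\ge c\sqrt{v}$ valid for $z$ near the edge, one gets
\begin{equation*}
|m_n(z)-s(z)|\le \frac{C|\delta_n(z)|}{\sqrt{v+|m_n(z)-s(z)|}+\text{(distance to edge)}},
\end{equation*}
a quadratic-in-$|m_n-s|$ inequality whose resolution yields $|m_n-s|\lesssim |\delta_n|^{1/2}$ or $|m_n-s|\lesssim |\delta_n|/\sqrt{v}$ depending on the regime. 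Raising to the $p$-th power and taking expectations converts this into a bound on $\E^{1/p}|m_n(z)-s(z)|^p$ in terms of $\E^{1/p}|\delta_n(z)|^p$. This is the recursion: the bound on $\delta_n$ involves $\Im m_n\le \Im s + |m_n-s|$, so one feeds the previous bound back in and iterates. Carrying this out on a grid of points $z=u+iv$ with $v$ ranging from $\sim n^{-1}\log^5 n$ up to a constant, one shows $\E^{1/p}|m_n(z)-s(z)|^p\le C (nv)^{-1}\log^4 n$ for $v$ down to the optimal scale — this is the $L_p$-analogue of the main estimate of \cite{GT:2014}, and the recursion argument advertised in the abstract.

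The third step is a smoothing inequality: one bounds the Kolmogorov distance $\Delta_n^*$ by an integral of $|m_n(z)-s(z)|$ over a contour with $\Im z = v$ plus error terms of size $O(v\log(1/v))$; taking $L_p$-norms through this inequality (Minkowski's integral inequality) and choosing $v\asymp n^{-1}\log^5 n$ gives \eqref{kolmog}. The main obstacle is the second step near the spectral edge: the stability of the self-consistent equation degenerates like $\sqrt{v}$ there, so the naive bound loses a power of $v$ and one only gets $\Delta_n^*\lesssim n^{-1/2}$. Overcoming this requires the more delicate argument — already present in \cite{GT:2014} — that exploits cancellation in the imaginary part, i.e. one estimates $\Im(m_n-s)$ separately and more sharply, and uses that near the edge the relevant contributions to the smoothing integral come with an extra factor that compensates the $\sqrt{v}$ loss. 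The second genuine difficulty, specific to the $L_p$ setting, is bookkeeping the dependence of all constants on $p$: every application of Rosenthal's inequality contributes a factor growing polynomially (or like $p^{p}$) in $p$, and one must check that with $p\le C'\log n$ these accumulate to at most a fixed power of $\log n$, which is exactly why the final bound has $\log^4 n$ rather than a larger power and why $p$ is restricted to $O(\log n)$.
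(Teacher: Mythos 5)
Your overall plan (self-consistency equation, Rosenthal/Burkholder with $p\sim\log n$, then a smoothing inequality) points in the right direction, but two of its load-bearing steps are not actually carried out, and one of them, as you state it, cannot give the claimed rate. The smoothing step you describe --- bound $\Delta_n^*$ by $\int|m_n(u+iv)-s(u+iv)|\,du$ along a single horizontal contour at height $v\asymp n^{-1}\log^5 n$ plus an error $O(v\log(1/v))$ --- is quantitatively broken: even with the optimal bulk bound $|m_n-s|\lesssim (nv)^{-1}$, the integral over a segment of unit length at that height is of order $\log^{-5}n$, nowhere near $n^{-1}\log^4 n$; optimizing the classical inequality in $v$ only recovers $n^{-1/2}$. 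The inequality actually used (Proposition \ref{smoothing1} and its $L_p$-version Corollary \ref{smoothing2}) has a very specific anisotropic structure: the full-line integral is taken at the fixed height $V=4$, where one proves $\E^{1/p}|\Lambda_n|^p\le Cp|s(z)|^{1+1/p}/n$ and hence a contribution $O(n^{-1}\log^2 n)$ after integration in $u$; the behaviour near the real axis enters only through $\sup_x\bigl|\int_{v_0/\sqrt\gamma}^{V}(m_n-s)(x+iu)\,du\bigr|$ for $x$ in the bulk $\mathbb J'_\varepsilon$, which is $O(n^{-1}\log^2 n)$ by $\int_{v_0}^{V}(nu)^{-1}du$; and the remaining error is $C_1v_0+C_2\varepsilon^{3/2}$. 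It is this last term, with the choices $v_0=A_0n^{-1}\log^4 n$ and $\varepsilon^{3/2}=2av_0$, that produces the final $n^{-1}\log^4 n$ --- not an accumulation of Rosenthal constants, as you suggest. Without this refined inequality (or an equivalent device) your step 3 does not close.

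The second gap is the edge. You assert an $L_p$ bound on $m_n-s$ down to $v\sim n^{-1}\log^4 n$ \emph{uniformly up to the spectral edge}, to be rescued by an unspecified ``cancellation in the imaginary part''; this is precisely the hard part and you neither prove nor concretely sketch it. The paper never proves such an edge bound: Theorem \ref{stiltjesmain} is stated only on the bulk region $\mathbb G$ (with $|u|\le 2-\varepsilon$ and $v\ge v_0/\sqrt{\gamma}$), and the edge strip is disposed of entirely by the $C_2\varepsilon^{3/2}$ term of the smoothing inequality, exploiting that the semicircle density vanishes like a square root there. Inside $\mathbb G$ the recursion is in the moment index $p$, not a bootstrap over a grid in $v$: the inputs are $\E|R_{jj}|^p\le C^p$ and $\E(1/|z+m_n|^p)\le C^p$ (Corollary \ref{cor8}, from \cite{GT:2014}) together with the quadratic-form bounds of Lemmas \ref{bp1}, \ref{bp1*}, valid for $p\le A_1(nv)^{1/4}$; the martingale decomposition of Section \ref{expect} then yields $J_p\le \frac{Cp}{nv}J_p^{(p-1)/p}+\frac{(Cp)^{p-1}}{(nv)^p}$, resolved by the elementary Lemma \ref{frak}. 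Note also that the relevant stability factor is $|z+m_n(z)+s(z)|\gtrsim\sqrt{|z^2-4|}$ (Lemma \ref{lem00}); your identity $|z+s(z)|=|s(z)|^{-1}$ gives a quantity bounded below by $1$, so it cannot be the one that ``degenerates like $\sqrt v$''. Finally, the constraint $p\le A_1(nv)^{1/4}$ with $p\asymp\log n$ is what forces $v_0\gtrsim n^{-1}\log^4 n$, which is the true source of the $\log^4$ in \eqref{kolmog}.
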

\begin{cor}\label{cormain}Let $\E X_{jk}=0$, $\E X_{jk}^2=1$.  Assume that
\begin{equation}\label{moment1}
 \sup_{n\ge1}\,\sup_{1\le j\le k\le n}\E|X_{jk}|^8=: \mu_8<\infty.
\end{equation}
 Then,    there exists  positive  constants $C=C(\mu_8)$  and $C'=C'(\mu_8)$ depending on  
 $\mu_8$   only
such that, for $p\le C'\log n$
\begin{equation} \label{kolmog1}
\E^{\frac1p}{\Delta_n^*}^p\le Cn^{-1}\log^4 n.
\end{equation}
\end{cor}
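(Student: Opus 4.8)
The plan is to deduce Corollary \ref{cormain} directly from Theorem \ref{main} by a standard truncation argument. Since the corollary assumes a uniform bound on the eighth moments, $\mu_8<\infty$, but not the hard truncation \eqref{trun}, I first replace the entries $X_{jk}$ by truncated versions $\widetilde X_{jk}:=X_{jk}\1\{|X_{jk}|\le Dn^{\frac14}\}$ for a suitable constant $D=D(\mu_8)$, then recenter and renormalize so that the new entries have mean zero and variance one, producing a matrix $\widetilde{\mathbf W}$ to which Theorem \ref{main} applies. The key point is that $\mu_8<\infty$ gives quantitative control, via Markov's inequality and the elementary bound $\E|X_{jk}|^4\1\{|X_{jk}|>Dn^{\frac14}\}\le (Dn^{\frac14})^{-4}\E|X_{jk}|^8=D^{-4}n^{-1}\mu_8$, on how much the truncation changes the relevant quantities. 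In particular, with probability at least $1-n^2\,\p\{|X_{jk}|>Dn^{\frac14}\}\ge 1-D^{-8}n^{-2}\mu_8$, no truncation occurs at all, so $\mathbf W=\widetilde{\mathbf W}$ on this event.

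First I would record that the truncated entries still satisfy $\E|\widetilde X_{jk}|^4\le C(\mu_8)$ and, after the affine correction $\widehat X_{jk}:=(\widetilde X_{jk}-\E\widetilde X_{jk})/\sqrt{\Var\widetilde X_{jk}}$, that $|\widehat X_{jk}|\le D'n^{\frac14}$ for some $D'=D'(\mu_8)$ and $\E|\widehat X_{jk}|^4\le\mu_4'(\mu_8)$; this is just the observation that the mean shift is $O(n^{-3/4})$ and the variance defect is $O(n^{-1})$, both negligible against the truncation level. Thus the hypotheses \eqref{moment} and \eqref{trun} of Theorem \ref{main} hold for $\widehat{\mathbf W}:=\frac1{\sqrt n}(\widehat X_{jk})$, giving $\E^{1/p}(\widehat\Delta_n^*)^p\le C n^{-1}\log^4 n$ for $p\le C'\log n$, where $\widehat\Delta_n^*:=\sup_x|\widehat{\mathcal F}_n(x)-G(x)|$.

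Next I would compare the three spectral distribution functions. By the rank/perturbation inequality for Kolmogorov distance between empirical spectral distributions (the standard consequence of the interlacing/Hoffman--Wielandt bounds), $\sup_x|\mathcal F_n(x)-\widehat{\mathcal F}_n(x)|$ is controlled on the no-truncation event (where it is $0$) and, on its complement, is trivially bounded by $1$; additionally, the deterministic affine map from $\widetilde{\mathbf W}$ to $\widehat{\mathbf W}$ moves eigenvalues by at most $\|\widehat{\mathbf W}-\widetilde{\mathbf W}\|\le\|\widehat{\mathbf W}-\widetilde{\mathbf W}\|_{HS}$, which is $O(n^{-1/4}\log n)$ deterministically and hence negligible after using the Lipschitz property of $G$ together with a bound on the number of eigenvalues in short intervals. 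Assembling: for $p\le C'\log n$,
\begin{equation}\notag
\E^{\frac1p}(\Delta_n^*)^p\le \E^{\frac1p}(\widehat\Delta_n^*)^p+\E^{\frac1p}\big(\sup_x|\mathcal F_n-\widehat{\mathcal F}_n|\big)^p\le Cn^{-1}\log^4 n+\big(\p\{\text{truncation occurs}\}\big)^{\frac1p}+Cn^{-1}\log n,
\end{equation}
and since $\p\{\text{truncation occurs}\}\le D^{-8}\mu_8 n^{-2}$ while $p\ge 1$, the middle term is $O(n^{-2/p})$; choosing $p$ of order $\log n$ makes it $O(n^{-2/(C'\log n)})$, which is bounded, so one must instead absorb it by noting $\Delta_n^*\le 1$ always and using Hölder to split off the bad event, yielding $\E(\Delta_n^*)^p=\E(\Delta_n^*)^p\1\{\text{good}\}+\E(\Delta_n^*)^p\1\{\text{bad}\}\le \E(\widehat\Delta_n^*+o(n^{-1}))^p+\p\{\text{bad}\}$, and $(\p\{\text{bad}\})^{1/p}\le (\mu_8 n^{-2})^{1/p}$. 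The main (minor) obstacle is exactly this bookkeeping: ensuring the failure probability of the truncation event, raised to the power $1/p$ with $p\asymp\log n$, does not destroy the $n^{-1}\log^4 n$ rate — this is handled by choosing $D=D(\mu_8)$ large enough that $\p\{\text{bad}\}\le n^{-Kp}$ for the relevant range of $p$, which is possible since each tail probability is $\le D^{-8}\mu_8 n^{-2}$ and there are only $n^2$ entries, so in fact $\p\{\text{bad}\}\le \mu_8 D^{-8}$ can be made smaller than any fixed negative power of $n$ only if we use the full strength $\p\{|X_{jk}|>Dn^{1/4}\}\le \mu_8 D^{-8}n^{-2}$ and note $n^2\cdot\mu_8 D^{-8}n^{-2}=\mu_8 D^{-8}$ — hence one should instead truncate at level $Dn^{1/4}(\log n)^{\beta}$ or simply accept the $O(1)$-probability-free regime by a separate elementary argument bounding $\E(\Delta_n^*)^p$ on the rare event via the crude deterministic bound $\Delta_n^*\le 1$ and Cauchy--Schwarz against $\p\{\text{bad}\}^{1/2}$ with $p$ replaced by $2p$. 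In any case the conclusion \eqref{kolmog1} follows for $p\le C'(\mu_8)\log n$.
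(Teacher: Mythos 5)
There is a genuine gap, and it is exactly at the point you flagged yourself: the probability of the ``bad'' event that some entry exceeds the truncation level $Dn^{1/4}$ is \emph{not} small in $n$ under an eighth-moment assumption. Your opening estimate $1-n^2\,\Pr\{|X_{jk}|>Dn^{1/4}\}\ge 1-D^{-8}n^{-2}\mu_8$ drops the factor $n^2$; the correct union bound gives only $\Pr\{\text{bad}\}\le \mu_8D^{-8}$, a constant (and for heavy-tailed entries with tails of exact order $n^{-2}$ at level $n^{1/4}$ the number of exceedances is asymptotically Poisson with constant mean, so this cannot be improved). Consequently the term you must add to $\E^{1/p}(\Delta_n^*)^p$ from the bad event, namely $(\E(\Delta_n^*)^p\1\{\text{bad}\})^{1/p}\le(\Pr\{\text{bad}\})^{1/p}$, is of size $\exp\{-c/p\}\to 1$ for $p\asymp\log n$ — it is a constant, not $O(n^{-1}\log^4n)$. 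Neither of your proposed repairs closes this: truncating at $Dn^{1/4}\log^{\beta}n$ destroys hypothesis \eqref{trun}, so Theorem \ref{main} no longer applies to the truncated matrix; and Cauchy--Schwarz against $\Pr\{\text{bad}\}^{1/2}$ still leaves a constant before taking the $p$-th root. So as written the argument does not yield \eqref{kolmog1}.

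The fix — and this is what the paper does — is to never introduce a bad event at all, but to use the rank inequality \emph{quantitatively on every realization}: the truncation changes the spectral data by at most (number of truncated entries)$/n$ in Kolmogorov distance, respectively by $\frac1{nv}\,\#\{(j,k):|X_{jk}|>cn^{1/4}\}$ at the level of Stieltjes transforms, and since the expected number of exceedances is $O(n^2\cdot n^{-2})=O(1)$, Chebyshev plus Rosenthal give a $p$-th moment of order $(Cp)^p$ for that count; this is Lemma \ref{trunc}. The paper then handles recentering and renormalization not by eigenvalue perturbation but by resolvent and Hilbert--Schmidt estimates (Lemmas \ref{trunc2} and \ref{trunc1}), arriving at Corollary \ref{stieltjesmaincor}, i.e.\ the bound \eqref{jpf} under $\mu_8<\infty$ alone, and finally reruns the smoothing/contour argument of Section 3 with that input — rather than invoking Theorem \ref{main} as a black box. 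Your black-box strategy at the Kolmogorov level could in principle be salvaged by the same quantitative device (bound $\sup_x|\mathcal F_n-\widehat{\mathcal F}_n|\le N/n$ pathwise, with $\E^{1/p}(N/n)^p\le Cp/n\le Cn^{-1}\log n$ for $p\le C'\log n$, and treat the mean-shift and rescaling by operator-norm perturbation plus the Lipschitz property of $G$), but the event-based bookkeeping you actually wrote down is the step that fails.
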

\begin{cor} Assume that conditions \eqref{moment} and \eqref{trun} or \eqref{moment1} hold.
Then there exist  positive constants $C,c$ depending on $D, \mu_4$ or $\mu_8$ only such that  
 \begin{equation}\notag
  \Pr\{\Delta_n^*\ge Cn^{-1}\log^5 n\}\le n^{-c\log\log n}.
 \end{equation}

\end{cor}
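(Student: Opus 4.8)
\medskip
\noindent\emph{Proof idea.}
The plan is to obtain this tail estimate as an essentially routine consequence of the $L_p$-bound already in hand: Theorem~\ref{main} under \eqref{moment} and \eqref{trun}, and Corollary~\ref{cormain} under \eqref{moment1} (the latter having already reduced the problem to the truncated situation \eqref{trun}). Thus in both cases one may start from \eqref{kolmog}, i.e.\ from the existence of constants $C_0,C'>0$ depending only on $D,\mu_4$ (resp.\ $\mu_8$) with
\begin{equation}\notag
\E(\Delta_n^*)^p\le\bigl(C_0n^{-1}\log^4 n\bigr)^p\qquad\text{for all }1\le p\le C'\log n,
\end{equation}
and then convert this moment bound into a probability bound by Markov's inequality applied with an exponent $p$ of order $\log n$. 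This is precisely where the additional factor $\log n$ (promoting $\log^4$ to $\log^5$) and the super-polynomial decay $n^{-c\log\log n}$ originate.

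Concretely I would take $p=p_n:=\lfloor C'\log n\rfloor$ and $t=Cn^{-1}\log^5 n$ with $C\ge C_0$, so that
\begin{equation}\notag
\Pr\bigl\{\Delta_n^*\ge Cn^{-1}\log^5 n\bigr\}\le t^{-p_n}\,\E(\Delta_n^*)^{p_n}\le\Bigl(\frac{C_0}{C\log n}\Bigr)^{p_n}=\exp\Bigl(-p_n\log\tfrac{C\log n}{C_0}\Bigr).
\end{equation}
For $n$ large enough that $C'\log n\ge2$ one has $p_n\ge\tfrac{C'}{2}\log n$, while $C\ge C_0$ gives $\log\tfrac{C\log n}{C_0}\ge\log\log n\ge0$; multiplying these two lower bounds yields
\begin{equation}\notag
\Pr\bigl\{\Delta_n^*\ge Cn^{-1}\log^5 n\bigr\}\le\exp\Bigl(-\tfrac{C'}{2}\log n\,\log\log n\Bigr)=n^{-\frac{C'}{2}\log\log n},
\end{equation}
which is the asserted inequality with $c=C'/2$.

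It then remains to dispose of the finitely many small values of $n$ excluded above. For $n\le e$ one has $\log\log n\le0$, hence $n^{-c\log\log n}\ge1$ and the inequality is vacuous; for the remaining bounded range of integers $n\ge3$ it suffices to enlarge $C$ so that $Cn^{-1}\log^5 n\ge1\ge\Delta_n^*$ there, which is possible since $n^{-1}\log^5 n$ is bounded below by a positive constant on any finite set of such $n$. I do not expect a genuine obstacle in this argument: all of the analytic work sits in Theorem~\ref{main}, and the only points that require a little care are respecting the admissible range $p\le C'\log n$ of the exponent in \eqref{kolmog} and checking that the exponent produced by the optimization genuinely has the claimed form $c\log\log n$.
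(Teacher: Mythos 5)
Your argument is correct and is exactly the route the paper takes: its proof of this corollary is simply ``Theorem \ref{main} (or Corollary \ref{cormain}) plus Chebyshev's inequality,'' and your computation with $p\asymp\log n$, the threshold $Cn^{-1}\log^5 n$, and the resulting factor $\exp(-c\log n\log\log n)$ just spells out the details the paper leaves implicit. The care you take with the admissible range $p\le C'\log n$ and with small $n$ (using $\Delta_n^*\le 1$) is sound but not a deviation from the paper's approach.
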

\begin{proof} 
 The result follows immediately from Theorem \ref{main} or Corollary \ref{cormain} and Chebyshev's inequality.
\end{proof}
\begin{cor}Inequality \eqref{kolmog1} implies that
 \begin{align} 
\Pr\Bigl\{\, \exists\,\,j&\in[c_1\log^5 n,n-c_1\log^5 n]:\notag\\& \,|\lambda_j-\gamma_{nj}|\ge C\log^5 n
\Big[\min\{j,n-j+1\}\Big]^{-\frac13}n^{-\frac23} \Bigr\}  \le n^{-c\log\log n}. \notag
\end{align}
\end{cor}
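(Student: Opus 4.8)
The plan is to derive this eigenvalue-rigidity estimate from the preceding corollary — the high-probability Kolmogorov bound $\Pr\{\Delta_n^*\ge Cn^{-1}\log^5 n\}\le n^{-c\log\log n}$ — by the classical device of converting a uniform bound on $|\mathcal F_n(x)-G(x)|$ into estimates for individual quantiles. Throughout, $\gamma_{nj}$ denotes the classical location of the $j$-th eigenvalue, i.e. $G(\gamma_{nj})=j/n$. Since the right-hand side of the assertion refers to a single event, it suffices to work on
\[
\mathcal E:=\{\Delta_n^*\le Cn^{-1}\log^5 n\},\qquad \Pr(\mathcal E)\ge 1-n^{-c\log\log n},
\]
and to establish the eigenvalue bound deterministically on $\mathcal E$, simultaneously for all admissible $j$; no further union bound over $j$ is needed.

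On $\mathcal E$ I would first localize each eigenvalue through $G$. From $\mathcal F_n(\lambda_j)\ge j/n$ and $\mathcal F_n(\lambda_j-)\le (j-1)/n$, together with $|\mathcal F_n(x)-G(x)|\le\Delta_n^*$ for all $x$ and the continuity of $G$, one obtains $|G(\lambda_j)-j/n|\le\Delta_n^*$, that is $|G(\lambda_j)-G(\gamma_{nj})|\le Cn^{-1}\log^5 n$. Set $m:=\min\{j,n-j+1\}$ and recall the edge behaviour of the semicircular law: $G(-2+t)\asymp t^{3/2}$ as $t\downarrow 0$ (symmetrically at $+2$), so that $2-|\gamma_{nj}|\asymp (m/n)^{2/3}$ and $g(\gamma_{nj})\asymp (m/n)^{1/3}$. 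Choosing the constant $c_1$ in the statement large enough that $Cn^{-1}\log^5 n\le\tfrac12\min\{j/n,(n-j+1)/n\}$ for every $j\in[c_1\log^5 n,\,n-c_1\log^5 n]$, the previous inequality forces $G(\lambda_j)$ to lie within a fixed ratio (between $\tfrac12$ and $\tfrac32$, say) of $j/n$; by monotonicity of $G^{-1}$ and the edge asymptotics this gives $2-|\lambda_j|\asymp (m/n)^{2/3}$ as well, and hence $g(\xi)\asymp (m/n)^{1/3}$ at every point $\xi$ of the interval joining $\lambda_j$ and $\gamma_{nj}$.

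It then remains to apply the mean value theorem: $|G(\lambda_j)-G(\gamma_{nj})|=g(\xi_j)\,|\lambda_j-\gamma_{nj}|$ for some $\xi_j$ between $\lambda_j$ and $\gamma_{nj}$, so that
\[
|\lambda_j-\gamma_{nj}|\le \frac{Cn^{-1}\log^5 n}{g(\xi_j)}\le C'\log^5 n\,\bigl[\min\{j,n-j+1\}\bigr]^{-1/3}n^{-2/3}
\]
for all $j\in[c_1\log^5 n,\,n-c_1\log^5 n]$ on $\mathcal E$; passing to the complement of $\mathcal E$ yields the stated probability bound. The only step requiring genuine care is the localization of $\lambda_j$ in the previous paragraph: since $G$ is strongly nonlinear near $\pm2$, one must first secure the crude two-sided estimate $G(\lambda_j)\asymp j/n$ — which is exactly the role of the threshold $c_1\log^5 n$ — before the mean value step can produce the sharp power $[\min\{j,n-j+1\}]^{-1/3}$. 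Everything else is routine and introduces no probabilistic input beyond the preceding corollary.
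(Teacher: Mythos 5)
Your proof is correct, and it is the standard argument that the paper's (purely referential) proof points to: the paper gives no argument itself beyond citing Subsection 9.1 of \cite{GT:2013}, whose content is exactly your conversion of the high-probability Kolmogorov bound into quantile rigidity via the edge behaviour $G(-2+t)\asymp t^{3/2}$ and the mean value theorem. Two small touch-ups: the quantile localization actually gives $|G(\lambda_j)-j/n|\le \Delta_n^*+\tfrac1n$ rather than $\le\Delta_n^*$ (harmless, since $\tfrac1n\le n^{-1}\log^5 n$), and of the claimed two-sided relation $g(\xi)\asymp(m/n)^{1/3}$ on the segment joining $\lambda_j$ and $\gamma_{nj}$ only the lower bound is needed, and it is the one that holds uniformly on the segment because $2-|\xi|$ is concave and hence minimized at an endpoint.
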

\begin{proof}For a proof of this Corollary see  Subsection 9.1, Appendix  in \cite{GT:2013}. 
This shows  the localization rigidity  of eigenvalues except 
for a neighborhood of the  edges given by $k\le C\log^5n$ or $k\ge n-C\log^5n$.
\end{proof}

 We denote the Stieltjes transform of $\mathcal F_n(x)$
by $m_n(z)$ and the Stieltjes transform of the semi-circular law by $s(z)$.  Let
$\mathbf R=\mathbf R(z)$ be the resolvent matrix of $\mathbf W$ given by
$\mathbf R=(\mathbf W-z\mathbf I_n)^{-1}$,
for all $z=u+iv$ with $v\ne 0$. Here and in what follows $\mathbf I_n$ denotes the identity matrix of dimension $n$.
Sometimes we shall omit the sub index in the notation of an identity  matrix.
It is well-known that the Stieltjes transform of the semi-circular distribution
satisfies the equation
\begin{equation}\label{stsemi}
s^2(z)+zs(z)+1=0
\end{equation}
(see, for example, equality (4.20) in \cite{GT:03}). 
Let 
\begin{equation}\label{v0}
v_0:= A_0n^{-1}\log^4n
\end{equation}
 and $\gamma(z):=|2-|u||$, for $z=u+iv$.
Introduce the region 
$\mathbb G=\mathbf G(A_0, n,\varepsilon)\subset\mathbb C_+$ 
\begin{equation}\notag
 \mathbb G:=\{z=u+iv\in\mathbb C_+: -2+\varepsilon\le u\le 2-\varepsilon,\, v\ge v_0/\sqrt{\gamma(z)}\}.
\end{equation}
Let  $a>0$  be positive number such that
\begin{equation} \label{constant}
 \frac1{\pi}\int_{|u|\le a}\frac1{u^2+1}du=\frac34.
\end{equation}
We prove the following result.
\begin{thm}\label{stiltjesmain}Let  $\frac12>\varepsilon>0$ be positive numbers such that
\begin{equation}\label{avcond}
 \varepsilon^{\frac32}:=2v_0a.
\end{equation}
Assuming the conditions of Theorem \ref{main}, for any $A_1>0$ there exist  positive constants $C=C(D,\mu_4, A_1)$ and $A_0=A_0(\mu_4,D,A_1)$ depending on $D$, $A_1$ 
and $\mu_4$ only, such that, for $z\in\mathbb G$ and for $1\le p\le A_1(nv)^{\frac14}$
\begin{align}\notag
 \E|m_n(z)-s(z)|^p\le (Cp)^p(nv)^{-p}.
\end{align}

\end{thm}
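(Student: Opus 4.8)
The plan is to prove the bound on $\E|m_n(z)-s(z)|^p$ via a self-consistent (stability) equation for the Stieltjes transform together with a recursive/bootstrap control of the error terms, following the approach used to establish local semicircle laws. First I would derive the standard resolvent identity: writing $m_n(z) = \frac1n\Tr\mathbf R(z) = \frac1n\sum_{j=1}^n R_{jj}(z)$, and using the Schur complement formula on the $j$-th diagonal entry, one gets
\begin{equation}\notag
 R_{jj} = \frac{-1}{z + m_n^{(j)}(z) + \varepsilon_j},
\end{equation}
where $m_n^{(j)}$ is the Stieltjes transform of the minor $\mathbf W^{(j)}$ and $\varepsilon_j$ collects the fluctuation terms: the diagonal entry $\frac1{\sqrt n}X_{jj}$, the quadratic form $\frac1n\sum_{k,l\ne j}X_{jk}X_{jl}R^{(j)}_{kl} - \frac1n\Tr\mathbf R^{(j)}$, and the rank-one perturbation difference $m_n - m_n^{(j)}$ (which is $O(\frac1{nv})$ deterministically). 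Averaging over $j$ and comparing with \eqref{stsemi}, I obtain an identity of the form $s^2_n + z s_n + 1 = \delta_n$ where $s_n = m_n$ and $\delta_n$ is an explicit average of the $\varepsilon_j$ weighted by resolvent entries; solving the resulting quadratic perturbatively gives $|m_n - s| \le C\frac{|\delta_n|}{\sqrt{|z^2-4|} + |\delta_n|}$ on the region $\mathbb G$, using that $\re z$ is bounded away from $\pm 2$ by $\varepsilon$ and that $v \ge v_0/\sqrt{\gamma(z)}$ keeps us at the right scale (this is exactly where the definitions of $\mathbb G$, $v_0$, $a$, and the relation \eqref{avcond} are used to control the stability denominator).

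Next I would take $p$-th moments. The main task is to bound $\E|\delta_n|^p$ by $(Cp)^p(nv)^{-p}$. The dangerous term in $\delta_n$ is the average of the centered quadratic forms $Q_j := \frac1n\sum_{k,l\ne j}X_{jk}X_{jl}R^{(j)}_{kl} - \frac1n\sum_k R^{(j)}_{kk}$. For this I would use a large-deviation bound for quadratic forms: conditionally on $\mathbf W^{(j)}$, $\E_j|Q_j|^p \le (Cp)^p\big(\frac1{n^2}\sum_{k,l}|R^{(j)}_{kl}|^2\big)^{p/2} + (Cp)^p\frac1{n^p}\sum_k |R^{(j)}_{kk}|^p \cdot (\text{moment factors})$, and then use the Ward identity $\sum_{k,l}|R^{(j)}_{kl}|^2 = \frac1v\Im\Tr\mathbf R^{(j)} \le \frac{n}{v}\Im m_n^{(j)}$ together with the a priori (but weaker) bound on $\Im m_n$ to conclude $\E_j|Q_j|^p \le (Cp)^p(nv)^{-p/2}(\Im m_n)^{p/2}$ up to lower-order pieces; the truncation condition \eqref{trun} $|X_{jk}|\le Dn^{1/4}$ and the fourth-moment bound \eqref{moment} are what make the Rosenthal/Burkholder-type estimates for the quadratic form go through with the stated constants. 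Averaging over $j$ and then over the randomness, combined with the (standard) crude bound $\Im m_n \le C$ in the bulk, I would get a first-pass estimate $\E|\delta_n|^p \le (Cp)^p(nv)^{-p/2}$, hence $\E|m_n-s|^p \le (Cp)^p(nv)^{-p/2}$.

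Then comes the bootstrap, which is the heart of the recursion argument advertised in the abstract. Having the crude bound, I re-enter the estimate: $\Im m_n \le \Im s + |m_n - s|$, and $\Im s(z) \asymp \sqrt{|z^2-4|} + v \le C$ but more importantly near the bulk $\Im s \asymp \sqrt{\gamma(z)+v}$, so feeding back $\E|m_n - s|^{p}\le (Cp)^p(nv)^{-p/2}$ improves the estimate of $\E_j|Q_j|^p$ (since $\Im m_n^{(j)}$ is now controlled), and iterating this finitely many times — each iteration roughly halving the gap to the target exponent — yields $\E|m_n - s|^p \le (Cp)^p(nv)^{-p}$. One must check that the number of iterations needed is $O(\log\log n)$ or bounded, that the implied constants do not blow up (this forces the restriction $p \le A_1(nv)^{1/4}$, since the quadratic-form moment bounds and the union over the $n$ indices $j$ only absorb into $(Cp)^p$ as long as $p$ is not too large relative to $nv$), and that the rank-one perturbation terms $m_n - m_n^{(j)} = O(1/(nv))$ and the diagonal terms $\frac1{\sqrt n}X_{jj} = O(n^{-1/4})$ are already at or below the target order. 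The main obstacle is executing this bootstrap cleanly: controlling $\E|\delta_n|^p$ requires simultaneously handling the $p$-dependence (to keep the $(Cp)^p$ form rather than a worse $p!$ or $p^{2p}$), the averaging over $j$ of correlated quantities $Q_j$ (which needs either a further martingale decomposition or a second application of the large-deviation bound to the sum $\frac1n\sum_j Q_j$), and ensuring the stability estimate for the quadratic equation remains valid with these probabilistic (not almost-sure) bounds throughout the region $\mathbb G$, in particular near the spectral edge where $\gamma(z)$ is small and the denominator $\sqrt{|z^2-4|}$ degenerates — this is precisely what the lower cutoff $v \ge v_0/\sqrt{\gamma(z)}$ in the definition of $\mathbb G$ is designed to neutralize.
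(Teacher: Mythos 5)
Your setup (Schur complement, self-consistency equation, Ward identity, Rosenthal/Burkholder bounds for the conditional quadratic forms, the role of the truncation \eqref{trun} and of the region $\mathbb G$) matches the paper's, but the mechanism you propose for passing from the crude bound $(Cp)^p(nv)^{-p/2}$ to the target $(Cp)^p(nv)^{-p}$ --- iterating the self-consistent equation while feeding improved control of $\Im m_n$ back into the quadratic-form estimate --- does not work, and this is exactly where the content of the theorem lies. The conditional bound for a single quadratic form saturates at $\E\{|\varepsilon_{j2}|^2\mid\mathfrak M^{(j)}\}\le C\,\Im m_n^{(j)}(z)/(nv)$, and this is its true order; once $\Im m_n$ is known to be comparable to $\Im s$, further iterations give nothing new, so your scheme stalls at $\E|m_n-s|^p\lesssim (Cp)^p\bigl(\Im s/(nv)\bigr)^{p/2}$, which on all of $\mathbb G$ exceeds $(nv)^{-p}$ by a factor of at least $(nv\,\Im s)^{p/2}\ge C\log^{2p}n$ (and by much more in the interior of the bulk). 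The extra factor $(nv)^{-1/2}$ is not a scale-bootstrap phenomenon but a fluctuation-averaging one: it comes from $\E\{\varepsilon_{j\nu}\mid\mathfrak M^{(j)}\}=0$ for $\nu=1,2,3$, so that in $\E\,\widehat T_n\varphi_p(\Lambda_n)$ the leading contribution --- obtained by replacing $R_{jj}$ by $-(z+m_n^{(j)}(z))^{-1}$ and $\Lambda_n$ by the $\mathfrak M^{(j)}$-measurable $\widetilde\Lambda_n^{(j)}$ --- vanishes identically (the terms $H_{11}$ and $M_1$ in the paper), and every surviving term carries a product of \emph{two} small factors: $\varepsilon_{j\nu}$ times either another $\varepsilon_{j\mu}$, or $\varepsilon_{j4}$, or $\delta_{j}=\Lambda_n-\widetilde\Lambda_n^{(j)}=O(1/(nv))$. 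You do mention in passing that averaging the $Q_j$ over $j$ may need ``a further martingale decomposition,'' but in the paper this is not a side condition to be checked; it is the proof.

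Concretely, the paper works with $J_p=\E|\Lambda_n|^p=\E\,\Lambda_n\varphi_p(\Lambda_n)$, $\varphi_p(w)=\overline w|w|^{p-2}$, Taylor-expands $\varphi_p(\Lambda_n)-\varphi_p(\widetilde\Lambda_n^{(j)})$, and, using as a priori input the high-moment bounds $\E|R_{jj}|^p\le C^p$ and $\E|z+m_n(z)|^{-p}\le C^p$ of Corollary \ref{cor8} (proved in \cite{GT:2014}) together with the quadratic-form bounds of Lemmas \ref{bp1} and \ref{bp1*}, arrives at the scalar recursion $J_p\le \frac{Cp}{nv}J_p^{\frac{p-1}p}+\frac{(Cp)^{p-1}}{(nv)^p}$, which is closed deterministically by Lemma \ref{frak}; no iteration over scales occurs, and the restriction $p\le A_1(nv)^{\frac14}$ enters through those a priori bounds, not through a union/iteration count. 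Note also that the stability estimate and the ``crude bound $\Im m_n\le C$ in the bulk'' you invoke at the first pass are themselves not free: the required inputs are precisely the moment bounds on $R_{jj}$ and $(z+m_n)^{-1}$ quoted above, without which your first step does not start. As written, the proposal is missing the key cancellation step and its closing inequality, so it does not establish the stated $(Cp)^p(nv)^{-p}$ bound.
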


\begin{cor}\label{stieltjesmaincor}Let  $\frac12>\varepsilon>0$ be positive numbers such that the condition \eqref{avcond} holds.
Let $\E X_{jk}=0$, $\E X_{jk}^2=1$.  Assume that
there exists a constant $\mu_8>0$ such that   for any $1\le j\le k\le n$ 
\begin{equation}\notag
 \sup_{j,k}\E|X_{jk}|^8=: \mu_8<\infty.
\end{equation}
 Then for any $A_1>0$ there exist  positive constants $C=C(\mu_8, A_1)$ and $A_0=A_0(\mu_8,A_1)$  depending on $\mu_8$ and $A_1$ only, such that, for $z\in\mathbb G$
 and $1\le p\le A_1(nv)^{\frac14}$,
\begin{align}\notag
 \E|m_n(z)-s(z)|^p\le (Cp)^p(nv)^{-p}.
\end{align}
\end{cor}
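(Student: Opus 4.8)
The plan is to deduce Corollary~\ref{stieltjesmaincor} from Theorem~\ref{stiltjesmain} by truncating the matrix entries and controlling the resulting perturbation of the Stieltjes transform in $L_p$. Put $T_n:=n^{\frac14}$ and introduce
\begin{equation}\notag
 \widetilde X_{jk}:=X_{jk}\,\mathbb I\{|X_{jk}|\le T_n\},\q \mu_{jk}:=\E\widetilde X_{jk},\q \sigma_{jk}^2:=\Var\widetilde X_{jk},\q \widehat X_{jk}:=\frac{\widetilde X_{jk}-\mu_{jk}}{\sigma_{jk}},
\end{equation}
and let $\widetilde{\mathbf W},\widehat{\mathbf W}$ denote the symmetric matrices with entries $\frac1{\sqrt n}\widetilde X_{jk},\frac1{\sqrt n}\widehat X_{jk}$, with Stieltjes transforms $\widetilde m_n,\widehat m_n$; write $m_n$ for the Stieltjes transform of $\mathbf W$. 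Using $\E X_{jk}=0$, $\E X_{jk}^2=1$, $\E|X_{jk}|^8\le\mu_8$ and $|X_{jk}|^r\mathbb I\{|X_{jk}|>T_n\}\le|X_{jk}|^8T_n^{r-8}$ one checks that $|\mu_{jk}|\le\mu_8 n^{-\frac74}$ and $0\le1-\sigma_{jk}^2\le2\mu_8 n^{-\frac32}$, so for $n$ large $\E\widehat X_{jk}=0$, $\E\widehat X_{jk}^2=1$, $|\widehat X_{jk}|\le2n^{\frac14}$ and $\sup_{j,k}\E|\widehat X_{jk}|^4\le C(\mu_8)$. Hence $\widehat{\mathbf W}$ satisfies the hypotheses of Theorem~\ref{main}, and, choosing $A_0=A_0(\mu_8,A_1)$ accordingly, Theorem~\ref{stiltjesmain} gives $\E|\widehat m_n(z)-s(z)|^p\le(Cp)^p(nv)^{-p}$ for $z\in\mathbb G$ and $1\le p\le A_1(nv)^{\frac14}$. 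By Minkowski's inequality it then suffices to prove, for such $z$ and $p$,
\begin{equation}\label{reduction}
 \E^{\frac1p}|m_n(z)-\widetilde m_n(z)|^p\le\frac{Cp}{nv}\q\text{and}\q\E^{\frac1p}|\widetilde m_n(z)-\widehat m_n(z)|^p\le\frac{Cp}{nv}.
\end{equation}

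For the first bound in \eqref{reduction} I would observe that $\mathbf W-\widetilde{\mathbf W}$ is supported on the random index set $\mathcal B:=\{(j,k):j\le k,\ |X_{jk}|>T_n\}$, so $\mathrm{rank}(\mathbf W-\widetilde{\mathbf W})\le2|\mathcal B|$, and then combine the rank inequality $\|\mathcal F_n^{\mathbf W}-\mathcal F_n^{\widetilde{\mathbf W}}\|_\infty\le\frac1n\mathrm{rank}(\mathbf W-\widetilde{\mathbf W})$ with integration by parts to get $|m_n(z)-\widetilde m_n(z)|\le\pi\,\mathrm{rank}(\mathbf W-\widetilde{\mathbf W})/(nv)\le2\pi|\mathcal B|/(nv)$. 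Since $|\mathcal B|$ is a sum of independent Bernoulli variables with $\E|\mathcal B|=\sum_{j\le k}\Pr\{|X_{jk}|>T_n\}\le\mu_8$, the bound $x^p\le p!\,e^x$ gives $\E|\mathcal B|^p\le p!\,\E e^{|\mathcal B|}\le p!\,e^{(e-1)\mu_8}\le(Cp)^p$, which yields the first inequality in \eqref{reduction}.

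The substantive part is the second bound in \eqref{reduction}. Set $E:=\widehat{\mathbf W}-\widetilde{\mathbf W}$, whose entries are $E_{jk}=\frac1{\sqrt n}\big((\sigma_{jk}^{-1}-1)\widetilde X_{jk}-\sigma_{jk}^{-1}\mu_{jk}\big)$. From the bounds on $\mu_{jk}$ and $1-\sigma_{jk}^2$ one obtains $|E_{jk}|\le C\mu_8 n^{-\frac74}$ and $\E|E_{jk}|^2\le C\mu_8^2 n^{-4}$, hence $\E\|E\|_{HS}^2\le C\mu_8^2 n^{-2}$; as $\|E\|_{HS}^2$ is a sum of independent non-negative terms with total mean $\le C\mu_8^2 n^{-2}$ and individual terms of order at most $n^{-7/2}$, Rosenthal's inequality yields $\E\|E\|_{HS}^{2p}=\E\big(\|E\|_{HS}^2\big)^p\le(Cp)^p n^{-2p}$. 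On the other hand, from the resolvent identity $\mathbf R^{\widetilde{\mathbf W}}-\mathbf R^{\widehat{\mathbf W}}=\mathbf R^{\widetilde{\mathbf W}}E\,\mathbf R^{\widehat{\mathbf W}}$, the Cauchy--Schwarz inequality for the trace, and the estimate $\|\mathbf R^{\widehat{\mathbf W}}\mathbf R^{\widetilde{\mathbf W}}\|_{HS}^2\le v^{-2}\Tr\big(\mathbf R^{\widehat{\mathbf W}}(\mathbf R^{\widehat{\mathbf W}})^*\big)=nv^{-3}\Im\widehat m_n(z)$, one gets
\begin{equation}\notag
 |\widetilde m_n(z)-\widehat m_n(z)|=\frac1n\Big|\Tr\big(E\,\mathbf R^{\widehat{\mathbf W}}\mathbf R^{\widetilde{\mathbf W}}\big)\Big|\le\frac1n\,\|E\|_{HS}\,\|\mathbf R^{\widehat{\mathbf W}}\mathbf R^{\widetilde{\mathbf W}}\|_{HS}\le\frac{\|E\|_{HS}\sqrt{\Im\widehat m_n(z)}}{\sqrt n\,v^{3/2}}.
\end{equation}
Since $\Im\widehat m_n(z)\le\Im s(z)+|\widehat m_n(z)-s(z)|\le1+|\widehat m_n(z)-s(z)|$ and $\E^{\frac1p}|\widehat m_n(z)-s(z)|^p\le Cp/(nv)\le CA_1$ on $\mathbb G$, the Cauchy--Schwarz inequality together with the $L_p$-bound for $\|E\|_{HS}$ gives
\begin{equation}\notag
 \E^{\frac1p}|\widetilde m_n(z)-\widehat m_n(z)|^p\le\frac{\big(\E\|E\|_{HS}^{2p}\big)^{\frac1{2p}}\big(\E(\Im\widehat m_n(z))^p\big)^{\frac1{2p}}}{\sqrt n\,v^{3/2}}\le C\sqrt p\,(nv)^{-\frac32}\le\frac{Cp}{nv},
\end{equation}
using $nv\ge1$ on $\mathbb G$; this is even stronger than required.

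The step I expect to be the main obstacle is precisely this second bound in \eqref{reduction}. The naive estimates $|\widetilde m_n-\widehat m_n|\le\|E\|_{op}v^{-2}$ or $|\widetilde m_n-\widehat m_n|\le n^{-\frac12}\|E\|_{HS}v^{-2}$ only produce a factor of order $n^{-5/4}v^{-2}$, which is not $O((nv)^{-1})$ when $v$ is near the lower boundary $v\sim n^{-1}\log^4n$ of $\mathbb G$. The improvement requires combining two gains: replacing the pointwise bound $\|E\|_{HS}\le Cn^{-3/4}$ by the second-moment bound $\E\|E\|_{HS}^2\le Cn^{-2}$ (which forces one to control the concentration of $\|E\|_{HS}^2$), and extracting the extra factor $\sqrt{\Im\widehat m_n(z)/v}$ from the identity $\Tr(\mathbf R^{\widehat{\mathbf W}}(\mathbf R^{\widehat{\mathbf W}})^*)=nv^{-1}\Im\widehat m_n(z)$ rather than bounding that trace by $nv^{-2}$. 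Finally, assembling the two estimates in \eqref{reduction} by Minkowski's inequality gives $\E^{\frac1p}|m_n(z)-s(z)|^p\le Cp/(nv)$, that is $\E|m_n(z)-s(z)|^p\le(Cp)^p(nv)^{-p}$ for $z\in\mathbb G$ and $1\le p\le A_1(nv)^{\frac14}$, with $C=C(\mu_8,A_1)$.
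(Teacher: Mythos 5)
Your proposal is correct and follows essentially the same route as the paper: truncate at level $n^{1/4}$, handle the truncation error via the rank inequality plus a moment bound for the number of large entries, apply Theorem \ref{stiltjesmain} to the adjusted matrix, and control the remaining perturbation through the resolvent identity, Hilbert--Schmidt norms and the identity $\Tr \mathbf R\mathbf R^*=nv^{-1}\Im m_n$. The only cosmetic differences are that you merge the centering and rescaling corrections (the paper's Lemmas \ref{trunc1} and \ref{trunc2}) into a single perturbation term and use an exponential-moment bound for the exceedance count where the paper uses Chebyshev and Rosenthal.
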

Similar results were obtained recently in \cite{SchleinMaltseva:2013}, Theorems 1,2, assuming sub-Gaussian tails for the distribution of the matrix entries.

\subsection{Sketch of the Proof}
1. As in \cite{GT:2014} we start with an estimate of the Kolmogorov-distance to the Wigner distribution via
an integral over the difference of the corresponding Stieltjes transforms along a contour 
in the upper half-plane using a  smoothing inequality \eqref{smoothing11}. This inequality is adapted   to the $L_p$-norm of the corresponding Kolmogorov distance.
The resulting bound \eqref{smoothing11} involves an integral over a segment, say 
$V=4$, at a fixed distance from the real axis and a segment  $u+i A_0 n^{-1}(2-\abs{u})^{-\frac 12}, \; u\le x$
at a distance of order $n^{-1}\log^4n$ but avoiding to come close to the endpoints $\pm 2$ of the support.
These segments are part of the boundary of an $n$-dependent region $\mathbb G$ where
bounds of Stieltjes transforms are needed. Since the Stieltjes-transform
and the diagonal elements $R_{jj}(z)$ of the resolvent  of the Wigner-matrix $\mathbf W$ are uniformly bounded 
on the segment with $\Im z=V$  by $1/V$ (see Section \ref{firsttypeint})  proving a bound of order 
$O(n^{-1}\log n)$  for the latter segment near the x-axis is the essential problem.

{\noindent \bf 2.} In order to investigate this crucial part of the error 
we start  with the 2nd resolvent  or self-consistency equation for the  Stieltjes transform resp.
the quantities $R_{jj}(z)$ of $\mathbf W$ (see \eqref{repr001} below) based on the difference of the resolvent 
of $\mathbf W^{(j)}$ ($j$th row and column removed) and $\mathbf W$.  
The  necessary bounds of
$\E|R_{jj}|^p$ for large $p=O(\log n)$ were proved in \cite{GT:2014}.

{\noindent \bf 3.} In Section \ref{expect} we prove a bound  for the error $\E|\Lambda_n|^p:= \E|m_n(z)-s(z)|^p$ of the
form 
$ C^pp^p(n v)^{-p}$ for $p\le C'(nv)^{\frac14}$
which suffices to prove the rate $O(n^{-1}\log^4 n)$
in Theorem  \ref{main}. Here we use a series of martingale-type
decompositions to evaluate  $\E|\Lambda_n|^p$.

{\noindent \bf 4.}
The necessary auxiliary bounds for all these steps
are collected in the Appendix.

\section{Bounds for the  Kolmogorov Distance Between Distribution Functions  via  Stieltjes Transforms}\label{smoothviastil}
To bound the error $\Delta_n^*$ we shall use an approach developed in previous work of the authors, see \cite{GT:03}.\\
We modify the bound of the  Kolmogorov distance between an arbitrary distribution function and the semi-circular distribution function
 via their Stieltjes transforms obtained in \cite{GT:03} Lemma 2.1. For $x\in[-2,2]$ define $\gamma(x):=2-|x|$.
 Given $\frac12>\varepsilon>0$ introduce the interval $\mathbb J_{\varepsilon}=\{x\in[-2,2]:\, \gamma(x)\ge\varepsilon\}$ and
$\mathbb J'_{\varepsilon}=\mathbb J_{\varepsilon/2}$.
For a distribution function $F$ denote by $S_F(z)$ its Stieltjes transform,
$$
S_F(z)=\int_{-\infty}^{\infty}\frac1{x-z}dF(x).
$$
\begin{prop}\label{smoothing1}
 Under the conditions of Proposition \ref{smoothing1} the following inequality holds
\begin{align}
 \Delta(F,G)&\le 2\int_{-\infty}^{\infty}|S_F(u+iV)-S_G(u+iV)|du+C_1v_0+C_2\varepsilon^{\frac32}\notag\\&
  + 2 \sup_{x\in\mathbb J'_{\varepsilon}}\Big|\int_{v'}^V(S_F(x+iu)-S_G(x+iu))du\Big|,\notag
\end{align}
where $v'=\frac {v_0}{\sqrt{\gamma}}$ with $\gamma=2-|x|$ and $C_1,C_2 >0$ denote absolute constants.

\end{prop}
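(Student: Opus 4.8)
The plan is to follow the proof of Lemma~2.1 in \cite{GT:03} (an Esseen-type smoothing bound) and adapt it to the $x$-dependent geometry forced by the square-root vanishing of $g$ at $\pm2$. Write $W:=F-G$, so $W(t)\to0$ as $|t|\to\infty$, and for $v>0$ let $F_v$ denote the convolution of $F$ with the Cauchy kernel of scale $v$, i.e.\ $F_v(y)=\frac{v}{\pi}\int\frac{F(t)}{(y-t)^2+v^2}\,dt$; integration by parts gives
\[
F_v(y)-G_v(y)=\frac{v}{\pi}\int_{-\infty}^{\infty}\frac{W(t)}{(y-t)^2+v^2}\,dt .
\]
I split $\Delta(F,G)$ into an edge contribution ($|x|$ near $2$ or $|x|>2$) and a bulk contribution ($x\in\mathbb J_{\varepsilon}$), and for the bulk I perform two successive reductions: from $\Delta(F,G)$ to a smoothed Kolmogorov distance, then from that to the Stieltjes transforms via a Cauchy--Riemann argument. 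For the edge reduction: $\Delta=\Delta(F,G)$ is, up to an arbitrarily small $\delta$, attained at some $x_0$ either as $F(x_0)-G(x_0)=\Delta$ or as $G(x_0)-F(x_0-0)=\Delta$; if $x_0\ge2-\varepsilon$ then in the first case $\Delta\le1-G(x_0)\le1-G(2-\varepsilon)\le C\varepsilon^{3/2}$ since $G$ carries mass $O(\varepsilon^{3/2})$ on $[2-\varepsilon,2]$, while in the second case monotonicity of $F$ yields $\Delta\le\big(G(x_0)-G(2-\varepsilon)\big)+\big(G(2-\varepsilon)-F(2-\varepsilon)\big)\le C\varepsilon^{3/2}+|F(2-\varepsilon)-G(2-\varepsilon)|$ and $2-\varepsilon\in\mathbb J_{\varepsilon}$ is a bulk point; the cases $x_0\le-(2-\varepsilon)$ and $|x_0|>2$ are analogous. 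Hence, up to the additive error $C_2\varepsilon^{3/2}$, it suffices to bound $|F(x_0)-G(x_0)|$ and $|G(x_0)-F(x_0-0)|$ for $x_0\in\mathbb J_{\varepsilon}$.

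For the smoothing step, fix $x_0\in\mathbb J_{\varepsilon}$ and put $v:=v_0/\sqrt{\gamma(x_0)}$ with $\gamma(x_0)=2-|x_0|$. Consider the case $F(x_0)-G(x_0)=\Delta$ (the other is symmetric, with the window placed to the left of $x_0$). On $[x_0,x_0+2av]$ one has $W(t)\ge\Delta-2Mav$ where $M:=\sup_{[x_0,x_0+2av]}g\le C\sqrt{\gamma(x_0)}$ — this last bound using $2av\le\gamma(x_0)$, which holds precisely because $2av_0=\varepsilon^{3/2}\le\gamma(x_0)^{3/2}$ by \eqref{avcond} and $x_0\in\mathbb J_{\varepsilon}$ — while $W\ge-\Delta$ everywhere. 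Evaluating the displayed identity at $y:=x_0+av$ and using
\[
\frac{v}{\pi}\int_{x_0}^{x_0+2av}\frac{dt}{(y-t)^2+v^2}=\frac1{\pi}\int_{-a}^{a}\frac{dw}{1+w^2}=\frac34
\]
by the definition \eqref{constant} of $a$, we get $F_v(y)-G_v(y)\ge\frac34(\Delta-2Mav)-\frac14\Delta=\frac12\Delta-\frac32Mav$, whence $\Delta\le2\,|F_v(y)-G_v(y)|+3Mav\le2\,|F_v(y)-G_v(y)|+C_1v_0$ since $Mav\le Cav_0$. Moreover $y\in\mathbb J'_{\varepsilon}=\mathbb J_{\varepsilon/2}$ because $\gamma(y)\ge\gamma(x_0)-av\ge\tfrac12\gamma(x_0)\ge\varepsilon/2$, again by \eqref{avcond}, and $v$ is of the same order as $v_0/\sqrt{\gamma(y)}$.

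To pass to the Stieltjes transforms, note that $\pi^{-1}\im S_F(\cdot+iv')$ is the density of $F_{v'}$; the Cauchy--Riemann identity $\partial_{v'}S_F(y+iv')=i\,\partial_yS_F(y+iv')$, integrated in $v'\in[v,V]$ and then in $y$ from $-\infty$ (the boundary term vanishing since $\re S_F(u+iv')\to0$ as $u\to-\infty$), gives $F_V(y)-F_v(y)=\frac1{\pi}\int_v^V\re S_F(y+iv')\,dv'$, and likewise for $G$; subtracting,
\[
F_v(y)-G_v(y)=\big(F_V(y)-G_V(y)\big)-\frac1{\pi}\int_v^V\re\big(S_F-S_G\big)(y+iv')\,dv' .
\]
Since $F_V-G_V$ has total integral $0$, its value at $y$ is half the difference of its two tails, so $|F_V(y)-G_V(y)|\le\frac1{2\pi}\int_{-\infty}^{\infty}|S_F-S_G|(u+iV)\,du$; and $\big|\int_v^V\re(S_F-S_G)(y+iv')\,dv'\big|\le\big|\int_v^V(S_F-S_G)(y+iv')\,dv'\big|$. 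Combining with the two reductions above and taking the supremum over $y\in\mathbb J'_{\varepsilon}$ yields the claimed inequality, the numerical constants $\tfrac1\pi,\tfrac2\pi$ produced here being $\le2$.

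The delicate point is the calibration in the smoothing step. The scale $v=v_0/\sqrt{\gamma(x_0)}$ must be made to grow towards the edge so that the smoothing error $Mav$, of order $\sqrt{\gamma(x_0)}\cdot v$, stays uniformly of order $v_0$ rather than of order $v_0/\gamma(x_0)$; and the window half-width $av$ is tied to the specific constant $a$ of \eqref{constant} — through $\tfrac2\pi\arctan a=\tfrac34$ — precisely so that the weight $\tfrac34$ above produces the coefficient $2$, while the relation \eqref{avcond} between $\varepsilon$ and $v_0$ simultaneously keeps the displaced evaluation point $y=x_0\pm av$ inside the slightly larger bulk interval $\mathbb J'_{\varepsilon}$ on which the Stieltjes transform will be estimated in the following sections. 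The edge reduction and the Cauchy--Riemann computation are, by comparison, routine.
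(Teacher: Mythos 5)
Your overall strategy is the right one --- the paper itself gives no proof but refers to \cite{GT:2013}, Proposition 2.1, whose argument is precisely this kind of Esseen-type smoothing with an $x$-dependent Cauchy scale --- and your edge reduction via the $O(\varepsilon^{3/2})$ mass of $G$ near $\pm2$, the window-weight computation $\tfrac34$ from \eqref{constant}, the calibration $Mav\le Cav_0$, and the Cauchy--Riemann passage to the Stieltjes transforms are all sound. The gap is in the final assembly. Your smoothing scale is $v=v_0/\sqrt{\gamma(x_0)}$, tied to the point $x_0$ where the supremum is nearly attained, whereas the right-hand side of the Proposition requires, at the displaced evaluation point $y=x_0\pm av$, the integral from $v'(y)=v_0/\sqrt{\gamma(y)}$ to $V$. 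In one of the two cases the window necessarily points toward the nearer edge (e.g.\ $F(x_0)-G(x_0)=\Delta$ with $x_0>0$), so $\gamma(y)$ can drop to $\gamma(x_0)/2$ and $v\ne v'(y)$, the two differing by a factor bounded away from $1$ (up to $\sqrt2$). ``Of the same order'', as you write, is not enough: the leftover piece $\int_{v}^{v'(y)}\bigl(S_F-S_G\bigr)(y+iu)\,du$ cannot be absorbed into $C_1v_0+C_2\varepsilon^{3/2}$ for an arbitrary distribution function $F$ --- if $F$ has an atom at $y$ then $|S_F(y+iu)|=1/u$ and this piece is of size $\ln\bigl(v'(y)/v\bigr)\asymp1$ --- so the claimed inequality, whose supremum term has lower limit $v_0/\sqrt{2-|x|}$ at the same point $x$ where the transform is evaluated, does not follow from what you proved.

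Closing the gap requires recalibrating the smoothing step so that the scale used at $y$ is exactly $v_0/\sqrt{\gamma(y)}$, and this is not merely cosmetic. The naive fix, choosing $y$ self-consistently with half-width $a v_0/\sqrt{\gamma(y)}$, fails at the worst point $\gamma(x_0)=\varepsilon$: with $2av_0=\varepsilon^{3/2}$ from \eqref{avcond}, the fixed-point relation for $t=\gamma(y)/\varepsilon$ reads $1-t=\tfrac1{2\sqrt t}$, which has no solution in $(0,1]$. A workable repair drops the symmetry of the window about $y$: pick $y\in\mathbb J'_{\varepsilon}$ with $y-x_0\ge c\,a v_0/\sqrt{\gamma(y)}$ for a suitable absolute constant $c\in(0,1)$ (this is compatible with $\gamma(y)\ge\varepsilon/2$ under \eqref{avcond}), take the window $\bigl[x_0,\,y+a v_0/\sqrt{\gamma(y)}\bigr]$ and the scale $v_0/\sqrt{\gamma(y)}$; the Cauchy weight becomes $\tfrac1\pi\bigl(\arctan(ca)+\arctan a\bigr)$ instead of $\tfrac34$, which still exceeds $\tfrac12+\tfrac1{4\pi}$, so the final constant in front of the supremum term remains $\le2$, and the error $M\cdot(\text{window length})\le Cv_0$ persists since the window length stays $O\bigl(v_0/\sqrt{\gamma(x_0)}+\gamma(x_0)\bigr)$ with $\sup g\le C\sqrt{\gamma(x_0)}$ there. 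Without some such recalibration the last sentence of your proof does not hold.
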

\begin{rem}\label{rem2.2}
 For any $x\in\mathbb J_{\varepsilon}$ we have
$\gamma=\gamma(x)\ge\varepsilon$
and according to condition \eqref{avcond},
$\frac{av}{\sqrt\gamma}\le \frac{\varepsilon}2$.
\end{rem}

For a  proof of this Proposition see  \cite{GT:2013}, Proposition 2.1.

\begin{cor}\label{smoothing2}
 Under the conditions of Proposition \ref{smoothing1} the following inequality holds
\begin{align}\label{smoothing11}
 \E^{\frac1p}\Delta^p(\mathcal F_n,G)&\le 2\int_{-\infty}^{\infty}\E^{\frac1p}|m_n(u+iV)-s(u+iV)|^pdu+C_1v_0+C_2\varepsilon^{\frac32}\notag\\&
  + 2 \E^{\frac1p}\sup_{x\in\mathbb J'_{\varepsilon}}\Big|\int_{v'}^V(m_n(u+iV)-s(u+iV))du\Big|^p,
\end{align}
where $v'=\frac {v_0}{\sqrt{\gamma}}$ with $\gamma=2-|x|$ and $C_1,C_2 >0$ denote absolute constants.
\end{cor}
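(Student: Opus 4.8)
The plan is to obtain \eqref{smoothing11} from Proposition \ref{smoothing1}, a deterministic smoothing inequality valid for every distribution function $F$, by lifting it to $L_p$. First I would apply Proposition \ref{smoothing1} pathwise: for each fixed $\omega\in\Omega$ take $F=\mathcal F_n(\,\cdot\,;\omega)$, whose Stieltjes transform is $S_F(z)=m_n(z;\omega)$, together with the semicircular distribution function $G$, $S_G=s$. All the parameters entering Proposition \ref{smoothing1} ($\varepsilon$, $v_0$, $a$, and the relations \eqref{avcond}, \eqref{constant}) are non-random, so this is legitimate, and it produces, for every $\omega$, the pointwise inequality
\begin{align}
\Delta(\mathcal F_n,G)&\le 2\int_{-\infty}^{\infty}|m_n(u+iV)-s(u+iV)|\,du+C_1v_0+C_2\varepsilon^{\frac32}\notag\\
&\quad+2\sup_{x\in\mathbb J'_{\varepsilon}}\Big|\int_{v'}^V(m_n(x+iu)-s(x+iu))\,du\Big|,\notag
\end{align}
with $v'=v_0/\sqrt{\gamma}$ and $\gamma=2-|x|$.

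Next I would take the $L_p(\Omega,\Pr)$-norm $\|\xi\|_p:=\E^{\frac1p}|\xi|^p$ of both sides. Since $p\ge1$, $\|\cdot\|_p$ is a norm, so Minkowski's inequality distributes it over the three terms on the right: the deterministic terms $C_1v_0$ and $C_2\varepsilon^{3/2}$ are left unchanged, and the supremum term becomes $\E^{\frac1p}\sup_{x\in\mathbb J'_{\varepsilon}}|\int_{v'}^V(m_n(x+iu)-s(x+iu))\,du|^p$, which is the last term of \eqref{smoothing11}. For the remaining term I would invoke Minkowski's integral inequality for the nonnegative, jointly measurable function $(u,\omega)\mapsto|m_n(u+iV;\omega)-s(u+iV)|$ on $\mathbb R\times\Omega$, which lets me pull $\|\cdot\|_p$ inside the $du$-integration:
\begin{align}
\Big\|\int_{-\infty}^{\infty}|m_n(u+iV)-s(u+iV)|\,du\Big\|_p\le\int_{-\infty}^{\infty}\E^{\frac1p}|m_n(u+iV)-s(u+iV)|^p\,du.\notag
\end{align}
Assembling the three estimates reproduces \eqref{smoothing11}.

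No genuine obstacle is expected; this is essentially bookkeeping, and the only points deserving a comment are measurability and the legitimacy of the interchange of norm and integral. Measurability of $\omega\mapsto m_n(z;\omega)$ for fixed $z$ is immediate since the eigenvalues $\lambda_j$, hence the resolvent, are measurable functions of the entries $X_{jk}$; joint measurability in $(u,\omega)$ and measurability of $\omega\mapsto\sup_{x\in\mathbb J'_{\varepsilon}}|\int_{v'}^V(m_n(x+iu)-s(x+iu))\,du|$ follow from continuity of the integrals in $z$, so the supremum can be taken over a countable dense subset of $\mathbb J'_{\varepsilon}$. Minkowski's integral inequality holds for arbitrary nonnegative measurable integrands (it reduces, via monotone convergence, to the finite triangle inequality), so no a priori integrability estimate is needed; and in any case the crude bounds $|m_n(u+iV)|\le 1/V$, $|s(u+iV)|\le 1/V$ together with $m_n(u+iV)-s(u+iV)=O(|u|^{-2})$ as $|u|\to\infty$ (for each fixed $n$) show the $du$-integral on the right of \eqref{smoothing11} is finite.
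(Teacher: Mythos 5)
Your proposal is correct and follows essentially the same route as the paper: a pathwise application of Proposition \ref{smoothing1} with $F=\mathcal F_n$, the triangle inequality in $L_p(\Omega)$ for the sum, and the interchange $\E^{\frac1p}\bigl(\int|m_n-s|\,du\bigr)^p\le\int\E^{\frac1p}|m_n-s|^p\,du$. The only cosmetic difference is that you cite Minkowski's integral inequality for this interchange, whereas the paper derives the same bound \eqref{integral} by writing the $p$-th power as a $p$-fold product and applying Fubini and the generalized H\"older inequality.
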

\begin{proof}
 To prove this Corollary we observe that by H\"older's inequality
 \begin{align}\label{integral}
  \E\Big[\int_{-\infty}^{\infty}&|m_n(u+iV)-s(u+iV)|du\Big]^p\notag\\&\le \E\prod_{l=1}^p\int_{-\infty}^{\infty}|m_n(u_l+iV)-s(u_l+iV)|du_l\notag\\&=
  \int_{-\infty}^{\infty}\cdots\int_{-\infty}^{\infty}\E\prod_{l=1}^p\int_{-\infty}^{\infty}|m_n(u_l+iV)-s(u_l+iV)|du_1\cdots du_p\notag\\&\le
\int_{-\infty}^{\infty}\cdots\int_{-\infty}^{\infty}\prod_{l=1}^p\E^{\frac1p}|m_n(u_l+iV)-s(u_l+iV)|^pdu_1\cdots du_p\notag\\&\le
\prod_{l=1}^p\int_{-\infty}^{\infty}\E^{\frac1p}|m_n(u_l+iV)-s(u_l+iV)|^pdu_l\notag\\&=\Big[\int_{-\infty}^{\infty}\E^{\frac1p}|m_n(u+iV)-s(u+iV)|^pdu\Big]^p.
  \end{align}
Proposition \ref{smoothing1} and inequality \eqref{integral} together conclude the proof of Corollary \ref{smoothing2}.
\end{proof}

\section{The proof of Theorem \ref{main}}
 We shall apply  the Corollary \ref{smoothing2} to prove the Theorem \ref{main}.
 We choose $V=4$ and $v_0$ as defined in \eqref{v0} and introduce the quantity 
 $\varepsilon=(2av_0)^{\frac23}$ .
 We shall denote in what follows by $C$  a generic constant depending on $\mu_4$ and $D$ only.
\subsection{Estimation of the First Integral in \eqref{smoothing2}  for $V=4$}\label{firsttypeint}
Denote by $\mathbb T=\{1,\ldots,n\}$. In the following we shall systematically
  use for any  $n\times n $ matrix $\mathbf X$
 together with its resolvent $\mathbf R$, its Stieltjes transform $m_n$ etc. the corresponding quantities $\mathbf X^{(\mathbb A)}$, $\mathbf R^{(\mathbb A)}$ 
 and $m_n^{(\mathbb A)}$
 for the corresponding  sub matrix  with entries $X_{jk}, j, k \not \in \mathbb A$, $\mathbb A \subset \mathbb T=\{1,\ldots,n\}$.
Observe that
\begin{equation}
 m_n^{(\mathbb A)}(z)=\frac1n\sum_{j\in\mathbb T_{\mathbb A}}\frac1{\lambda^{(\mathbb A)}-z}.
\end{equation}
By $\mathfrak M^{(\mathbb J)}$ we denote the $\sigma$-algebra generated by $X_{lk}$ with $l,k\in\mathbb T_{\mathbb J}$.
If $\mathbb A=\emptyset$ we shall omit the set $\mathbb A$ as exponent index. 

In this Section we shall consider $z=u+iV$ with $V=4$. We shall use the representation
\be\notag
R_{jj}=\frac1{-z+\frac1{\sqrt
n}X_{jj}-\frac1n{{\sum_{k,l\in\mathbb T_j}}}X_{jk}X_{jl}R^{(j)}_{kl}},
\en
(see,
for example, equality (4.6) in \cite{GT:03}). We may rewrite it as
follows
\begin{equation}\label{repr001*}
 R_{jj}=-\frac1{z+m_n(z)}+\frac1{z+m_n(z)}\varepsilon_jR_{jj},
\end{equation}

where
$\varepsilon_j:=\varepsilon_{j1}+\varepsilon_{j2}+\varepsilon_{j3}+
\varepsilon_{j4}$   with
\begin{align}
\varepsilon_{j1}&:=\frac1{\sqrt n}X_{jj},\quad
\varepsilon_{j2}:=-\frac1n{\sum_{k\ne l
\in\mathbb T_j}}X_{jk}X_{jl}R^{(j)}_{kl},\quad
\varepsilon_{j3}:=-\frac1n{\sum_{k\in\mathbb T_j}}(X_{jk}^2-1)R^{(j)}_{kk},\notag\\
\varepsilon_{j4}&:=\frac1n(\Tr \mathbf R-\Tr\mathbf R^{(j)}).\quad
\end{align}
Let
\begin{equation}\notag
\Lambda_n:=\Lambda_n(z):=m_n(z)-s(z)=\frac1n\Tr\mathbf R-s(z).    
\end{equation}
As follows from \eqref{stsemi}, for the semi-circular law we have
\begin{equation}\label{scmain0}
 s(z)=-\frac1{z+s(z)}\text{ and }|s(z)|\le 1.
\end{equation}
See, for instance \cite{Bai:93}, p. 632, relations (3.2), (3.3).
Summing equality \eqref{repr001*} in $j=1,\ldots,n$ and solving with respect $\Lambda_n$, we get 
\begin{equation}\label{lambda'}
\Lambda_n=  m_n(z)-s(z)=\frac{T_n}{z+m_n(z)+s(z)},
\end{equation}
where
\begin{equation}\notag
 T_n=\frac1n\sum_{j=1}^n\varepsilon_jR_{jj}.
\end{equation}
Note that for $V=4$
\begin{equation}\label{3.5}
 |m_n(z)|\le \frac14\le \frac12|z+s(z)|, \quad |s(z)-m_n(z)|\le \frac12 \text{ a.s.}
\end{equation}
This implies
\begin{equation}\label{inequal10}
 |z+m_n(z)+s(z)|\ge\frac12|z+s(z)|,\quad |z+m_n(z)|\ge \frac12|s(z)+z|.
\end{equation}
The last inequality and inequality \eqref{lambda'} imply as well that, for $V=4$,
\begin{equation}\label{mnz}
|m_n(z)|\le |s(z)|(1+2|T_n(z)|).
\end{equation}

Let
\begin{equation}\notag
 \varphi(z)=\overline z|z|^{p-2}.
\end{equation}
Using  equality \eqref{lambda'}, we may write, for $p\ge2$,
\begin{align}\notag
 \E|\Lambda_n|^p=\sum_{\nu=1}^4\E\frac{T_{n\nu}}{z+m_n(z)+s(z)}\varphi(\Lambda_n),
\end{align}
where
\begin{equation}\notag
 T_{n\nu}=\frac1n\sum_{j=1}^n\varepsilon_{j\nu}R_{jj}.
\end{equation}

We consider first the term with $\nu=4$.
Using the relation
\begin{equation}\label{3.9}
 \frac1n\sum_{j=1}^n\varepsilon_{j4}R_{jj}=-\frac1n\Tr\mathbf R^2,
\end{equation}
(see Lemma 5.5 in \cite{GT:2013}) and \eqref{inequal10}), we get, using Lemma \ref{resol00}, inequality \eqref{res1} in the Appendix, and inequalities 
\eqref{inequal10} and  \eqref{mnz}
\begin{align}\label{finish1}
 |\frac{T_{n4}}{z+m_n(z)+s(z)}|&\le \frac1n|\sum_{j=1}^n\frac{\varepsilon_{j4}R_{jj}}{z+m_n(z)+s(z)}|\le \frac {C|s(z)|}n|m_n(z)|\notag\\&\le \frac Cn|s(z)|^2(1+|T_n|).
\end{align}
Therefore, by H\"older's inequality,
\begin{align}
 \Big|\E\frac{T_{n4}\varphi(\Lambda_n)}{z+m_n(z)+s(z)}\Big|\le \frac{C|s(z)|^{2}}n(1+\E^{\frac1p}|T_n|^p)
 \E^{\frac{p-1}p}|\varphi(\Lambda_n)|^{\frac{p}{p-1}}.\notag
\end{align}
It is straightforward to check that, by the Cauchy -- Schwartz inequality and  $|R_{jj}|\le V^{-1}$,
\begin{equation}\notag
 \E|T_n|^p\le \E\Big(\frac1n\sum_{j=1}^n|\varepsilon_j|^2\Big)^{\frac p2}\Big(\frac1n\sum_{j=1}^n|R_{jj}|^2\Big)^{\frac p2}
 \le \frac1{2^p}\E\Big(\frac1n\sum_{j=1}^n|\varepsilon_j|^2\Big)^{\frac p2}.
\end{equation}
Applying Corollary \ref{eps1} and Lemmas \ref{eps2} and \ref{eps3} in the Appendix, we get that there exists an absolute  constant $C''>0$ 
such that for $p\le C'\log n$,
\begin{align}\notag
 \E^{\frac1p}|T_n|^p\le Cpn^{-\frac 12}\le C''CC'\le C.
\end{align}
Therefore,
\begin{equation}\notag
 \Big|\E\frac{T_{n4}\varphi(\Lambda_n)}{z+m_n(z)+s(z)}\Big|\le \frac{C|s(z)|^{2}}n
 \E^{\frac{p-1}p}|\varphi(\Lambda_n)|^{\frac{p}{p-1}}.
\end{equation}

Furthermore, we represent, for $\nu=1,2,3$
\begin{align}
 \frac1n\E\frac{\sum_{j=1}^n\varepsilon_{j\nu}
 R_{jj}\varphi(\Lambda_n)}{z+m_n(z)+s(z)}=H_1+H_2,\notag
\end{align}
where
\begin{align}\label{finish01}
 H_1&:=\frac1n\E\frac{\sum_{j=1}^n\varepsilon_{j\nu}
s(z)\varphi(\Lambda_n)}{z+m_n(z)+s(z)},\notag\\
H_2&:=\frac1n\E\frac{\sum_{j=1}^n\varepsilon_{j\nu}(R_{jj}-
s(z))\varphi(\Lambda_n)}{z+m_n(z)+s(z)},
\end{align}
First we bound $H_2$. Applying first the Cauchy -- Schwartz inequality followed by H\"older's inequality, we get
\begin{align}\label{eps0}
 |H_2|\le {C|s(z)|}\E^{\frac1{2p}}\Big(\frac1n\sum_{j=1}^n|\varepsilon_{j\nu}|^{2}\Big)^p\E^{\frac1{2p}}\Big(\frac1n\sum_{j=1}^n|R_{jj}-s(z)|^2\Big)^{p}
\E^{\frac{p-1}p}|\varphi(\Lambda_n)|^{\frac p{p-1}}.
\end{align}

Using the representation \eqref{repr001*}, we may write
\begin{align}\label{lambda''}
 R_{jj}=s(z)-s(z)\varepsilon_jR_{jj}-s(z)\Lambda_nR_{jj}.
\end{align}
Applying the representations \eqref{lambda''} and \eqref{lambda'}, we obtain
\begin{align}\label{eps}
 \frac1n\sum_{j=1}^n|R_{jj}(z)-s(z)|^{2}\le C|s(z)|^{2}\Big(\frac1n\sum_{l=1}^n|\varepsilon_l|^{2}\Big).
\end{align}
Combining inequalities \eqref{eps0} and \eqref{eps}, we get
\begin{align}\notag
 |H_2|\le C|s(z)|^2\E^{\frac1p}\Big(\frac1n\sum_{j=1}^n|\varepsilon_{j}|^2\Big)^{p}\E^{\frac{p-1}p}|\varphi(\Lambda_n)|^{\frac p{p-1}}.
\end{align}
Applying now Corollary \ref{eps1} and Lemmas \ref{eps2}, \ref{eps3} in the Appendix, we get
\begin{align}\label{finish2}
 |H_2|\le \frac{C|s(z)|^{2}}{n}\E^{\frac{p-1}p}|\varphi(\Lambda_n)|^{\frac p{p-1}}.
\end{align}
We continue with $H_1$ 
and  represent it in the form
\begin{align}\label{finish001}
 H_1:=H_{11}+H_{12}+H_{13},
\end{align}
where
\begin{align}
H_{11}&:=\frac1n\E\frac{\sum_{j=1}^n\varepsilon_{j\nu}
s(z)\varphi({\widetilde{\Lambda}_n^{(j)}})}{z+m_n^{(j)}(z)+s(z)},\notag\\ 
H_{12}&:=\frac1n\E\frac{\sum_{j=1}^n\varepsilon_{j\nu}
s(z)(\varphi(\Lambda_n)-\varphi(\widetilde{\Lambda}_n^{(j)}))}{z+m_n^{(j)}(z)+s(z)},\notag\\
H_{13}&:=-\frac1n\E\frac{\sum_{j=1}^n\varepsilon_{j\nu}
s(z)\varepsilon_{j4}\varphi(\Lambda_n)}{(z+m_n(z)+s(z))(z+m_n^{(j)}(z)+s(z))},\notag
\end{align}
where
\begin{equation}\notag
 \widetilde{\Lambda}_n^{(j)}=\Lambda_n^{(j)}+\frac{s(z)}n+\frac1{n^2}\Tr{\mathbf R^{(j)}}^2s(z).
\end{equation}

It is straightforward to check that, by conditional independence of $\varepsilon_{j\nu}$ and $\widetilde{\Lambda}_n^{(j)}$,
\begin{equation}\label{finish0001}
 H_{11}=0.
\end{equation}
Using Lemma \ref{lem2} in the Appendix and applying the Cauchy -- Schwartz inequality, H\"older's inequality and inequality \eqref{inequal10}, we get
\begin{align}\label{finish00001}
 |H_{13}|&\le \frac{C|s(z)|^{2}}{n}\E\Big(\frac1n\sum_{j=1}^n|\varepsilon_{j\nu}|^2\Big)^p\E^{\frac{p-1}p}|\varphi(\Lambda_n)|^{\frac p{p-1}}\le
 \frac{C|s(z)|^{2}}{n}\E^{\frac{p-1}p}|\varphi(\Lambda_n)|^{\frac p{p-1}}.
\end{align}

Let
\begin{equation} \label{important}
 \eta_{j0}=\frac1n\sum_{l\in\mathbb T_j}[(\mathbf R^{(j))^2]}_{ll}, \quad\eta_{j1}=\frac1p\sum_{l\in\mathbb T_j}(X_{jl}^2-1)[(\mathbf R^{(j))^2]}_{ll},\quad
 \eta_{j2}=\frac1n\sum_{k\ne l\in\mathbb T_j}X_{jk}X_{jl}[(\mathbf R^{(j)})^2]_{kl}.
\end{equation}
Note that
\begin{equation}\label{important1}
 |\eta_{j0}|=\frac1n|\Tr {\mathbf R^{(j)}}^2|\le v^{-1}\im m_n^{(j)}(z).
\end{equation}
We use  that (see Lemma 5.5 in \cite{GT:2013})
\begin{equation}\label{important2}
 \varepsilon_{j4}=\frac1n(1+\eta_{j0}+\eta_{j1}+\eta_{j2})R_{jj}.
\end{equation}
Note that 
\begin{equation}\notag
 \delta_{nj}:=\Lambda_n-\widetilde{\Lambda}_n^{(j)}=\varepsilon_{j4}-\frac {s(z)}n-\eta_{j0}s(z)=\frac1n(R_{jj}-s(z))(1+\eta_{j0})+\frac1n(\eta_{j1}+\eta_{j2})R_{jj}.
\end{equation}
This yields
\begin{align}\label{gg1}
 |\delta_{nj}|\le \frac1n(1+|\eta_{j0}|)|R_{jj}-s(z)|+\frac1n|\eta_{j1}+\eta_{j2}|)(|s(z)|+|R_{jj}-s(z)|).
\end{align}

By Taylor's formula $\varphi(x)-\varphi(y)=(x-y)\E_{\tau}\varphi'(x-\tau(x-y))$, we may write
\begin{align}\notag
 H_{12}=\frac{s(z)}n\E\frac{\sum_{j=1}^n\varepsilon_{j\nu}\delta_{nj}\varphi'(\Lambda_n-\tau\delta_{nj})
}{z+m_n^{(j)}(z)+s(z)},
\end{align}
where $\tau$ denotes a uniformly distributed random variable on the unit interval which is independent of all other random variables.
Note that, according to Lemma \ref{exp*} in the Appendix with $\zeta=\delta_{nj}$,
\begin{equation}\label{lala}
 |\varphi'(\Lambda_n-\tau\delta_{nj})|\le p|\Lambda_n-\tau\delta_{nj}|^{p-2}\le Cp|\Lambda_n|^{p-2}+p^{p-1}|\tau\delta_{nj}|^{p-2}.
\end{equation}
Therefore, applying inequality \eqref{inequal10}, the Cauchy -- Schwartz inequality, H\"older's inequality and finally inequality \eqref{lala}, we get
\begin{align}\notag
 |H_{12}|&\le C{p|s(z)|^2}\E^{\frac1{p}}(\frac1n\sum_{j=1}^n|\varepsilon_{j\nu}|^2)^{\frac p2}\E^{\frac1{p}}\Big(\frac1n\sum_{j=1}^n|\delta_{nj}|^2\Big)^{\frac p2}
 \E^{\frac{p-2}p}|\Lambda_n|^p\notag\\&\qquad\qquad\qquad\qquad+p^{p-1}|s(z)|^2\frac1n\sum_{j=1}^n\E|\varepsilon_{j\nu}||\delta_{nj}|^{p-1}.
\end{align}
Denote by $\eta_j:=\eta_{j1}+\eta_{j2}$.
Applying inequality \eqref{gg1} and that $|\eta_{j0}|\le V^{-2}$ , we get
\begin{align}\notag
 |H_{12}|&\le K_1+K_2+K_3+K_4,
 \end{align}
 where
 \begin{align}
 K_1&:=\frac{Cp|s(z)|^2}n\E^{\frac1{p}}(\frac1n\sum_{j=1}^n|\varepsilon_{j\nu}|^2)^{\frac p2}\E^{\frac1{p}}(\frac1n\sum_{j=1}^n|R_{jj}-s(z)|^2)^{\frac p2}
 \E^{\frac{p-2}p}|\Lambda_n|^p,\notag\\
 K_2&:=\frac{Cp|s(z)|^3}n\E^{\frac1{p}}(\frac1n\sum_{j=1}^n|\varepsilon_{j\nu}|^2)^{\frac p2}\E^{\frac1{p}}
 (\frac1n\sum_{j=1}^n|\eta_{j}|^2)^{\frac p2}\E^{\frac{p-2}p}|\Lambda_n|^p,\notag\\
 K_3&:=\frac{Cp|s(z)|^3}n\E^{\frac1{p}}(\frac1n\sum_{j=1}^n|\varepsilon_{j\nu}|^2)^{\frac p2}\E^{\frac1{2p}}
 (\frac1n\sum_{j=1}^n|\eta_{j}|^4)^{\frac p2}\notag\\&\qquad\qquad\qquad\qquad\times
 \E^{\frac1{2p}}(\frac1n\sum_{j=1}^n|R_{jj}-s(z)|^4)^{\frac p2}\E^{\frac{p-2}p}|\Lambda_n|^p,\notag\\
K_4&:=\frac{p^{p-1}|s(z)|^2}n\sum_{j=1}^n\E|\varepsilon_{j\nu}||\delta_{nj}|^{p-1}.\notag
\end{align}
Using equality \eqref{lambda''} and $|\Lambda_n|\le \frac12$ a. s., we get, for $z=u+iV$,
\begin{align}\label{lambdanew}
 |R_{jj}(z)-s(z)|&\le C|s(z)|^2(|\varepsilon_j|+|\varepsilon_j|^2+|\Lambda_n|+|\Lambda_n|^2)\notag\\&\le C|s(z)|^2(|\varepsilon_j|+|\varepsilon_j|^2+|\Lambda_n|).
\end{align}
Therefore,
\begin{align}
 \E^{\frac1{p}}\Big(\frac1n\sum_{j=1}^n|R_{jj}-s(z)|^2\Big)^{\frac p2}&\le C|s(z)|^4\Big(\E^{\frac1p}\Big(\frac1n\sum_{j=1}^n|\varepsilon_{j}|^2\Big)^{\frac p2}\notag\\&+
 \E^{\frac1p}\Big(\frac1n\sum_{j=1}^n|\varepsilon_{j}|^4\Big)^{\frac p2}+\E^{\frac1p}|\Lambda_n|^p\Big).
\end{align}
Applying the last inequality, we get
\begin{align}
 |K_1|&\le \frac{Cp|s(z)|^4}n\E^{\frac1{p}}(\frac1n\sum_{j=1}^n|\varepsilon_{j\nu}|^2)^{\frac p2}\E^{\frac1{p}}(\frac1n\sum_{j=1}^n|\varepsilon_{j}|^2)^{\frac p2}\E^{\frac{p-2}p}|\Lambda_n|^p\notag\\&+
 \frac{Cp|s(z)|^4}n\E^{\frac1{p}}(\frac1n\sum_{j=1}^n|\varepsilon_{j\nu}|^2)^{\frac p2}
 \E^{\frac1{p}}(\frac1n\sum_{j=1}^n|\varepsilon_{j}|^4)^{\frac p2}\E^{\frac{p-2}p}|\Lambda_n|^p\notag\\&+
 \frac{Cp|s(z)|^4}n\E^{\frac1{p}}(\frac1n\sum_{j=1}^n|\varepsilon_{j\nu}|^2)^{\frac p2}\E^{\frac{p-1}p}|\Lambda_n|^p.\notag
\end{align}

Using Corollary \ref{eps1} and Lemmas \ref{eps2}, \ref{eps3}, in the Appendix, we get
\begin{align}
 |K_1|&\le\frac{Cp^2|s(z)|^4}{n^2}
 \E^{\frac{p-2}p}|\Lambda_n|^p+\frac{Cp^2|s(z)|^2}{n}\E^{\frac{p-1}p}|\Lambda_n|^p.\notag
\end{align}
According to Corollary \ref{eps1} and Lemmas \ref{eps2}, \ref{eps3}, inequality \eqref{etaj}, in the Appendix we have
\begin{equation}\notag
 K_2\le \frac{Cp|s(z)|^3}{n^2}\E^{\frac{p-2}p}|\Lambda_n|^p,
\end{equation}
and
\begin{align}\notag
  K_3&\le\frac{Cp|s(z)|^3}{n^2}\E^{\frac{p-2}p}|\Lambda_n|^p.
\end{align}
To bound $K_4$ we use inequalities \eqref{gg1} and \eqref{lambdanew} and obtain
\begin{align}\label{k4}
 K_4&\le \frac{p^{p-1}|s(z)|^2}n\sum_{j=1}^n\E|\varepsilon_{j\nu}|
 \Big(\frac1n|R_{jj}-s(z)|(1+|\eta_j|)+\frac1n|\eta_j||s(z)|\Big)^{p-1}.
\end{align}
Without loss of generality, we may assume that $p\ge3$ and use the inequality
\begin{equation}
 |\varepsilon_{j\nu}|\le \sqrt n\Big(\frac1n\sum_{l=1}^n|\varepsilon_{l\nu}|^2\Big)^{\frac12}.
\end{equation}
We rewrite now inequality \eqref{k4} in the form
\begin{align}
  K_4&\le \frac{p^{p-1}|s(z)|^2\sqrt n}{n^{p-1}}\E\Big(\frac1n\sum_{j=1}^n|\varepsilon_{j\nu}|^2\Big)^{\frac12}\frac1n\sum_{j=1}^n
 \Big(|R_{jj}-s(z)|(1+|\eta_j|)\notag\\&\qquad\qquad\qquad\qquad\qquad\qquad\qquad\qquad\qquad+|\eta_j||s(z)|\Big)^{p-1}.
\end{align}

Applying H\"older's inequality, we obtain, for $p\ge3$
\begin{align}\label{k42}
 K_4&\le\frac{p^{p-1}|s(z)|^2\sqrt n}{n^{p-1}}\E^{\frac2p}\Big(\frac1n\sum_{j=1}^n|\varepsilon_{j\nu}|^2\Big)^{\frac p2}\frac1n\sum_{j=1}^n
 \E^{\frac{p-1}p}\Big(|R_{jj}-s(z)|(1+|\eta_j|)\notag\\&\qquad\qquad\qquad\qquad\qquad\qquad\qquad\qquad\qquad+|\eta_j||s(z)|\Big)^{p}.
\end{align}
For $p\le3$ we may apply H\"older's inequality directly and obtain
\begin{align}\label{k43}
 K_4&\le \frac{p^{p-1}|s(z)|^2}{n^p}\sum_{j=1}^n\E^{\frac1p}|\varepsilon_{j\nu}|^p
 \Big(\E^{\frac{p-1}p}|R_{jj}-s(z)|^{p}(1+|\eta_j|)^{p}+\E^{\frac{p-1}p}|\eta_j|^p|s(z)|^p\Big).
\end{align}
Inequalities \eqref{k42} and \eqref{k43} together imply
\begin{equation}\label{k44}
 K_4\le \frac{C|s(z)|^{p+1}p^p}{n^p}.
\end{equation}

Collecting the relations \eqref{finish1}, \eqref{finish01}, \eqref{finish001}, \eqref{finish0001}, \eqref{finish00001},  and \eqref{k44}
we get
\begin{align}
 \E|\Lambda_n|^p\le \frac{Cp|s(z)|^3}{n^2}\E^{\frac{p-2}p}|\Lambda_n|^p+\frac{C|s(z)|^{2}}{n}\E^{\frac{p-1}p}|\varphi(\Lambda_n)|^{\frac p{p-1}}
 +\frac{C|s(z)|^{p+1}p^p}{n^p}.
\end{align}

Applying Corollary  \ref{simple} in the Appendix, we get,
\begin{align}\label{disp}
 \E^{\frac1p}|\Lambda_n|^p\le\frac{Cp|s(z)|^{1+\frac1p}}{n}.
\end{align}

Consider now the integral
\begin{equation}\notag
 Int(V)=\int_{-\infty}^{\infty}\E^{\frac1p}|m_n(u+iV)-s(u+iV)|^pdu
\end{equation}
for $V=4$.
Using inequality \eqref{disp}, we have
\begin{align}\notag
 |Int(V)|&\le \frac Cn\int_{-\infty}^{\infty}|s(u+iV)|^{1+\frac1p}du.
\end{align}

Finally, we note that
\begin{equation}\label{finish6}
 \int_{-\infty}^{\infty}|s(z)|^{1+\frac1p}dx\le \int_{-\infty}^{\infty}\int_{-\infty}^{\infty}\frac1{((x-u)^2+V^2)^{\frac{p+1}{2p}}}dudG(x)\le C(p+1).
\end{equation}

Inequalities \eqref{finish1} -- \eqref{finish6}
together imply, that for $p\le C\log n$,
\begin{equation}\label{final!}
 \int_{-\infty}^{\infty}\E^{\frac1p}|\Lambda_n(u+iV)|^pdu\le \frac {C\log^2 n}n.
\end{equation}
\subsection{The bound of the second integral in \eqref{smoothing11}}To finish the proof of Theorem \ref{main} we need to bound the second integral
in \eqref{smoothing1} for $z\in\mathbb G$, $v_0=C_7n^{-1}\log^4 (n+1)$ and $\varepsilon=C_8v_0^{\frac23}$, where the constant $C_8$ is chosen such that so that condition 
\eqref{avcond} holds.
We shall use the results of Theorem \ref{stiltjesmain}.
According to these results we have, for $z\in\mathbb G$,
\begin{equation}\label{jpf}
 J_p(z):=\E^{\frac1p}|m_n(z)-s(z)|^p\le Cp(nv)^{-1}.
\end{equation}
Partition the interval $\mathbb J_{\varepsilon}$ into $k_n=n^4$ subintervals of equal length, that is $-2+\varepsilon=x_0<\cdots<x_{k_n}=2-\varepsilon$.
Note that
\begin{align}
 \sup_{x\in \mathbb J_{\varepsilon}}\Big|&\int_{v_0/\sqrt{\gamma}}^V(m_n(x+iv)-s(x+iv))dv\Big|\notag\\&\le \max_{1\le k\le k_n}\sup_{x_{k-1}\le x\le x_k}
 \Big|\int_{v_0/\sqrt{\gamma}}^V(m_n(x+iv)-s(x+iv))dv\Big|.\notag
\end{align}
Furthermore,
\begin{align}
 \sup_{x_{k-1}\le x\le x_k}
 \Big|\int_{v_0/\sqrt{\gamma}}^V(m_n(x+iv)&-s(x+iv))dv\Big|\notag\\&\le \Big|\int_{v_0/\sqrt{\gamma}}^V(m_n(x_{k-1}+iv)-s(x+iv))dv\Big|\notag\\&+
 \int_{x_{k-1}}^{x_{k}}\int_{v_0/\sqrt{\gamma}}^V|m_n'(x+iv)-s'(x+iv)|dvdx.\notag
\end{align}
Note that, for $z\in\mathbb G$,
\begin{equation}\notag
 |m_n'(x+iv)-s'(x+iv)|\le Cv^{-2}\le Cn^2.
\end{equation}
This yields that
\begin{align}
 \sup_{x_{k-1}\le x\le x_k}&
 \Big|\int_{v_0/\sqrt{\gamma}}^V(m_n(x+iv)-s(x+iv))dv\Big|\notag\\&\le \Big|\int_{v_0/\sqrt{\gamma}}^V(m_n(x_{k-1}+iv)-s(x+iv))dv\Big|+
 Cn^{-1},\notag
\end{align}
and
\begin{align}
 \sup_{x\in \mathbb J_{\varepsilon}}\Big|&\int_{v_0/\sqrt{\gamma}}^V(m_n(x+iv)-s(x+iv))dv\Big|\notag\\&\le \max_{0\le k\le k_n-1}
 \Big|\int_{v_0/\sqrt{\gamma}}^V(m_n(x_k+iv)-s(x_k+iv))dv\Big|+Cn^{-1}.\notag
\end{align}
Using this inequality, we get
\begin{align}
\E\sup_{x\in \mathbb J_{\varepsilon}}&\Big|\int_{v_0/\sqrt{\gamma}}^V(m_n(x+iv)-s(x+iv))dv\Big|^p\notag\\&
\le 2^{p-1}(\sum_{k=0}^{k_n-1}\E\Big|\int_{v_0/\sqrt{\gamma}}^V(m_n(x_k+iv)-s(x_k+iv))dv\Big|^p)+\frac {(2C)^p}{n^p}.\notag
\end{align}
Applying H\"older's inequality,   we get similar to \eqref{integral}
\begin{align}
\E&\Big|\int_{v_0/\sqrt{\gamma}}^V(m_n(x_k+iv)-s(x_k+iv))dv\Big|^p\notag\\&
\le \Big(\int_{v_0/\sqrt{\gamma}}^V\E^{\frac1p}|m_n(x_k+iv)-s(x_k+iv)|^pdv\Big)^p.\notag
\end{align}
Applying now inequality \eqref{jpf} for $p=[ 4\log n]$, we obtain
\begin{align}\label{final!!}
\E^{\frac1{p}}\sup_{x\in \mathbb J_{\varepsilon}}&\Big|\int_{v_0/\sqrt{\gamma}}^V(m_n(x+iv)-s(x+iv))dv\Big|^p\notag\\&
\le Ck_n^{\frac1p}n^{-1}\log^2 n\le Cn^{-1}\log^2 n.
\end{align}
Inequalities \eqref{final!} and \eqref{final!!} complete the proof of Theorem \ref{main}.
\begin{rem}To prove the Corollary \ref{cormain} it is enough to use the results of Corollary \ref{stieltjesmaincor}, 
which imply inequality \eqref{jpf}. Thus Corollary \ref{cormain} is proved.
\end{rem}
\section{Proof of Corollary \ref{stieltjesmaincor}}
We consider the truncated random variables $\widehat X_{jl}$ defined by
\begin{equation}\label{trunc000}
 \widehat X_{jl}:=X_{jl}\mathbb I\{|X_{jl}|\le cn^{\frac14} \}.
\end{equation}
Let $\widehat {\mathcal F}_n(x)$ denote the empirical spectral distribution function of the matrix $\widehat{\mathbf W}=\frac1{\sqrt n}(\widehat X_{jl})$.
\begin{lem}\label{trunc}
 Assuming the conditions of Theorem \ref{main} there exist constants $C, c>0$ such that, for any $p\ge1$
 \begin{equation}\notag
\E^{\frac1p}|m_n(z)-s(z)|^p\le \frac{Cp}{nv}.
 \end{equation}

\end{lem}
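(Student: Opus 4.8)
\begin{Proof}
The plan is to deduce the bound from Theorem \ref{stiltjesmain} by truncation followed by a renormalization. I would keep $\widehat X_{jl}=X_{jl}\mathbb I\{|X_{jl}|\le cn^{\frac14}\}$ from \eqref{trunc000} and set $\sigma_{jl}^2:=\Var\widehat X_{jl}$, $\widetilde X_{jl}:=(\widehat X_{jl}-\E\widehat X_{jl})/\sigma_{jl}$, so that $\E\widetilde X_{jl}=0$, $\E\widetilde X_{jl}^2=1$. Using $\sup_{jl}\E|X_{jl}|^8=\mu_8<\infty$ and $\E[|X_{jl}|^r\mathbb I\{|X_{jl}|>cn^{\frac14}\}]\le\mu_8(cn^{\frac14})^{r-8}$ for $r=1,2$, one checks that $|\E\widehat X_{jl}|\le C\mu_8 n^{-\frac74}$ and $|1-\sigma_{jl}^2|\le C\mu_8 n^{-\frac32}$; hence for $n$ large $\tfrac12\le\sigma_{jl}\le2$, $\sup_{jl}|\widetilde X_{jl}|\le Dn^{\frac14}$ with $D=D(\mu_8,c)$, and $\sup_n\sup_{jl}\E|\widetilde X_{jl}|^4\le C(\mu_8)$. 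Thus $\widetilde{\mathbf W}:=\tfrac1{\sqrt n}(\widetilde X_{jl})$ satisfies the hypotheses of Theorem \ref{main}, and Theorem \ref{stiltjesmain} applied to it gives, for its Stieltjes transform $\widetilde m_n(z)$, $z\in\mathbb G$ and $p$ in the admissible range,
\[
 \E^{\frac1p}|\widetilde m_n(z)-s(z)|^p\le Cp(nv)^{-1}.
\]
By the triangle inequality it then remains to bound $\E^{\frac1p}|m_n(z)-\widetilde m_n(z)|^p$, which I would split through the Stieltjes transform $\widehat m_n(z)$ of $\widehat{\mathbf W}:=\tfrac1{\sqrt n}(\widehat X_{jl})$ (with resolvents $\widehat{\mathbf R},\widetilde{\mathbf R}$).

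\emph{First step (from $m_n$ to $\widehat m_n$).} Put $N:=\#\{(j,l):j\le l,\ X_{jl}\ne\widehat X_{jl}\}$. Each changed pair affects at most two rows of $\mathbf W-\widehat{\mathbf W}$, so $\mathrm{rank}(\mathbf W-\widehat{\mathbf W})\le2N$; the rank inequality for empirical spectral distribution functions gives $\sup_x|\mathcal F_n(x)-\widehat{\mathcal F}_n(x)|\le\tfrac{2N}n$, and an integration by parts yields $|m_n(z)-\widehat m_n(z)|\le\tfrac\pi v\sup_x|\mathcal F_n-\widehat{\mathcal F}_n|\le\tfrac{2\pi N}{nv}$. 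The events $\{|X_{jl}|>cn^{\frac14}\}$, $j\le l$, are independent with probability at most $\mu_8c^{-8}n^{-2}$, so $N$ is stochastically dominated by a $\mathrm{Bin}\bigl(\binom{n+1}{2},q\bigr)$ variable with $q\le\mu_8c^{-8}n^{-2}$ and mean at most $\mu_8c^{-8}$; choosing $c=c(\mu_8)$ large enough that this mean is $\le1$ gives $\Pr\{N=k\}\le 1/k!$ and hence $\E N^p\le\sum_{k\ge0}k^p/k!\le(Cp)^p$ for every $p\ge1$. Therefore $\E^{\frac1p}|m_n(z)-\widehat m_n(z)|^p\le Cp(nv)^{-1}$.

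\emph{Second step (from $\widehat m_n$ to $\widetilde m_n$).} Writing $\widetilde X_{jl}-\widehat X_{jl}=(\sigma_{jl}^{-1}-1)\widehat X_{jl}-\sigma_{jl}^{-1}\E\widehat X_{jl}$, the moment estimates above give $\|\mathcal E\|_F^2\le C\mu_8^2\bigl(n^{-4}\sum_{jl}\widehat X_{jl}^2+n^{-\frac52}\bigr)$ for $\mathcal E:=\widetilde{\mathbf W}-\widehat{\mathbf W}$. By the resolvent identity $\widehat{\mathbf R}-\widetilde{\mathbf R}=\widehat{\mathbf R}\mathcal E\,\widetilde{\mathbf R}$, together with $\|\widehat{\mathbf R}\|\le v^{-1}$ and $\|\widetilde{\mathbf R}\|_F^2=\tfrac nv\im\widetilde m_n(z)$,
\[
 |\widehat m_n(z)-\widetilde m_n(z)|=\tfrac1n\bigl|\Tr(\mathcal E\,\widetilde{\mathbf R}\,\widehat{\mathbf R})\bigr|\le\tfrac1n\|\mathcal E\|_F\,\|\widetilde{\mathbf R}\widehat{\mathbf R}\|_F\le\frac{\|\mathcal E\|_F\,\sqrt{\im\widetilde m_n(z)}}{\sqrt n\,v^{\frac32}}.
\]
Since $\tfrac1{n^2}\sum_{jl}\widehat X_{jl}^2$ has $L_p$-norm bounded by a constant (Rosenthal's inequality, the summands being bounded by $c^2n^{\frac12}$ with mean at most $1$), and $\im\widetilde m_n(z)\le|\widetilde m_n(z)|\le1+|\widetilde m_n(z)-s(z)|$ has $L_p$-norm bounded by a constant on $\mathbb G$ by the bound already proved, one obtains $\E^{\frac1p}\|\mathcal E\|_F^p\le Cn^{-1}$ and then, after a Cauchy--Schwarz split of the expectation, $\E^{\frac1p}|\widehat m_n(z)-\widetilde m_n(z)|^p\le C(nv)^{-\frac32}\le C(nv)^{-1}$, using $nv\ge1$ on $\mathbb G$. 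Adding the three estimates yields $\E^{\frac1p}|m_n(z)-s(z)|^p\le Cp(nv)^{-1}$, with $C,c$ depending on $\mu_8$ only.

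The only genuinely delicate point is the second step. The crude spectral-perturbation bound $|\widehat m_n-\widetilde m_n|\le(\sqrt n\,v^2)^{-1}\|\mathcal E\|_F$ carries one power of $v^{-1}$ too many and is useless on $\mathbb G$, where $v$ can be as small as $\sim n^{-1}\log^4n$; recovering the missing half power of $v$ forces one to replace $\|\widetilde{\mathbf R}\|_F\le\sqrt n\,v^{-1}$ by the identity $\|\widetilde{\mathbf R}\|_F^2=\tfrac nv\im\widetilde m_n(z)$ and to supply a priori $L_p$-control of $\im\widetilde m_n(z)$, which is exactly what Theorem \ref{stiltjesmain} provides for $\widetilde{\mathbf W}$. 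Once the Poisson-type moment bound $\E^{\frac1p}N^p\le Cp$ of the first step is in place, the remaining estimates are routine.
\end{Proof}
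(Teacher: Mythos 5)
Your argument is correct for the statement read literally, but it proves more than the paper's own proof of this lemma does, and along a partly different route. In the paper, Lemma \ref{trunc} is only the truncation step: despite the ``$s(z)$'' in the display, what is actually bounded there is $m_n(z)-\widehat m_n(z)$ via Bai's rank inequality, $|m_n(z)-\widehat m_n(z)|\le \frac{C}{nv}\,\mathrm{rank}(\mathbf X-\widehat{\mathbf X})$, with the $p$-th moment of the number of truncated entries controlled by Chebyshev plus Rosenthal; the centering and the variance renormalization are handled separately in Lemmas \ref{trunc2} and \ref{trunc1}, and only the combination of all three with Theorem \ref{stiltjesmain} gives the bound for $m_n-s$, i.e.\ Corollary \ref{stieltjesmaincor}. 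You reassemble that whole chain yourself: your first step is exactly the paper's rank argument, except that you bound the moments of the count $N$ by binomial domination and Poisson-type moments instead of Rosenthal (both give $(Cp)^p$; note that stochastic domination strictly gives $\Pr\{N\ge k\}\le 1/k!$ rather than the pointwise bound you wrote, but that suffices and is in fact what the union bound yields directly); your second step merges Lemmas \ref{trunc2} and \ref{trunc1} into a single perturbation estimate by centering and rescaling simultaneously, and it relies on the same two key devices as the paper, namely the identity $\|\widetilde{\mathbf R}\|_2^2=\frac nv\im\widetilde m_n(z)$ in place of the lossy bound $\sqrt n\,v^{-1}$, and the a priori $L_p$ control of $\widetilde m_n$ obtained by applying Theorem \ref{stiltjesmain} to the renormalized truncated matrix (the paper's \eqref{imr1} and \eqref{imr}). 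The only discrepancy with the statement as printed is the range of $p$: the claim ``for any $p\ge1$'' is true only of the rank step; once Theorem \ref{stiltjesmain} enters, one is confined to $z\in\mathbb G$ and $p\le A_1(nv)^{\frac14}$, which you state explicitly and which is also the range of Corollary \ref{stieltjesmaincor}, so this restriction is inherent to the literal statement and not a gap in your argument.
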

\begin{proof}
 We use rank the inequality of Bai. See \cite{BaiSilv:2010},  Theorem A.43, p. 503.
 According to this inequality
 \begin{equation}\notag
 |m_n(z)-s(z)| \le \frac 1{nv}\text{\rm rank}(\mathbf X-\widehat{\mathbf X}).
 \end{equation}
Using the obvious fact that the  rank of a matrix is not  larger then the number of its non-zero entries, we may write
\begin{align}
 \E|m_n(z)-s(z)|^p &\le \frac 1{(nv)^p}\E\Big(\sum_{j,k=1}^n\mathbb I\{|X_{jk}|\ge Cn^{\frac14}\}\Big)^p\notag\\&\le 
 \frac {2^p}{(nv)^p}\Big(\big(\sum_{j,k=1}^n\E\mathbb I\{|X_{jk}|\ge Cn^{\frac14}\}\big)^p\notag\\&\qquad\qquad+\E\Big|\sum_{j,k=1}^n(\mathbb I\{|X_{jk}|\ge Cn^{\frac14}\}
 -\E\mathbb I\{|X_{jk}|\ge Cn^{\frac14}\}\big)\Big|^p\Big).\notag
\end{align}
Applying Chebyshev's and Rosenthal's inequalities, we get
\begin{align}
 \E|m_n(z)-s(z)|^p &\le\frac {2^p}{(nv)^p}\Big(\big(\frac{1}{n^2}\sum_{j,k=1}^n\E X_{jk}^8\big)^p\notag\\&+C^pp^p\big(\frac{1}{n^2}\sum_{j,k=1}^n\E X_{jk}^8)^{\frac p2}+
 \frac1{n^2}\sum_{j,k=1}^n\E X_{jk}^8\big)\Big)\le \frac {(Cp)^p}{(nv)^p}.\notag
\end{align}

Thus, the Lemma is proved.
\end{proof}
Introduce now $\widetilde X_{jk}=\widehat X_{jk}-\E\widehat X_{jk}$ and $\widetilde{\mathbf W}=\frac1{\sqrt n}(\widetilde X_{jk})_{j,k=1}^n$.
Denote by $\widetilde m_n(z)$ the Stieltjes transform of empirical distribution function of the matrix $\widetilde{\mathbf W}$ and let
$\widehat m_n(z)$ denote the Stieltjes transform of the matrix
$\widehat{\mathbf W}$.
Furthermore, we re-normalize the matrix $\widetilde {\mathbf W}$. Let $\sigma_{jk}^2=\E|\widetilde X_{jk}|^2$. We  introduce the random variables
$\breve X_{jk}=\sigma_{jk}^{-1}\widetilde X_{jk}$. And let $\hat m_n(z)$ denote the Stieltjes transform of the empirical spectral distribution function of the matrix 
$\breve{\mathbf W}=
\frac1{\sqrt n}(\breve   X_{jk})_{j,k=1}^n$.
\begin{lem}\label{trunc2}
\begin{equation}\notag
 \E|\widetilde m_n(z)-\breve m_n(z)|^p\le \frac{Cp}{(nv)^{\frac32}}.
 \end{equation}
\end{lem}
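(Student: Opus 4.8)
The plan is to compare $\widetilde{\mathbf W}$ and $\breve{\mathbf W}$ through the second resolvent identity, using that the two matrices differ only by the entrywise rescaling factors $\sigma_{jk}^{-1}$, which under the eighth moment assumption are all of the form $1+O(n^{-3/2})$. Put $\mathbf D:=\breve{\mathbf W}-\widetilde{\mathbf W}$; this is a real symmetric matrix with entries $D_{jk}=n^{-1/2}(\sigma_{jk}^{-1}-1)\widetilde X_{jk}$. With $\widetilde{\mathbf R}=(\widetilde{\mathbf W}-z\mathbf I)^{-1}$ and $\breve{\mathbf R}=(\breve{\mathbf W}-z\mathbf I)^{-1}$ the identity $\widetilde{\mathbf R}-\breve{\mathbf R}=\widetilde{\mathbf R}\mathbf D\breve{\mathbf R}$ gives
$$\widetilde m_n(z)-\breve m_n(z)=\frac1n\Tr\bigl(\widetilde{\mathbf R}\mathbf D\breve{\mathbf R}\bigr)=\frac1n\Tr\bigl(\mathbf D\,\breve{\mathbf R}\,\widetilde{\mathbf R}\bigr).$$

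First I would quantify the rescaling. Since $\E|X_{jk}|^8\le\mu_8$, one has $\E X_{jk}^2\mathbb I\{|X_{jk}|>cn^{1/4}\}\le\mu_8(cn^{1/4})^{-6}\le Cn^{-3/2}$ and $|\E\widehat X_{jk}|\le\mu_8(cn^{1/4})^{-7}\le Cn^{-7/4}$, hence $\sigma_{jk}^2=\E|\widehat X_{jk}|^2-|\E\widehat X_{jk}|^2=1+O(n^{-3/2})$ and therefore $|\sigma_{jk}^{-1}-1|\le Cn^{-3/2}$. Combined with $|\widetilde X_{jk}|\le 2cn^{1/4}$ this gives the deterministic bound $\|\mathbf D\|_2^2=\frac1n\sum_{j,k}(\sigma_{jk}^{-1}-1)^2\widetilde X_{jk}^2\le Cn^{-3/2}$ as well as $\E\|\mathbf D\|_2^2\le\frac1n\sum_{j,k}(\sigma_{jk}^{-1}-1)^2\sigma_{jk}^2\le Cn^{-2}$. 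Since $\|\mathbf D\|_2^2$ is a sum of nonnegative summands bounded by $Cn^{-7/2}$ that become independent after pairing $(j,k)$ with $(k,j)$, and with this small mean, Rosenthal's inequality (applied as in the proof of Lemma \ref{trunc}) yields $\E\|\mathbf D\|_2^{2p}\le(Cn^{-2})^{p}$ for all $p\le C'\log n$.

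Next I would estimate the trace. By Cauchy--Schwarz for the Hilbert--Schmidt inner product, followed by the submultiplicativity $\|\mathbf A\mathbf B\|_2\le\|\mathbf A\|_2\|\mathbf B\|$ with $\|\cdot\|$ the operator norm,
$$\bigl|\Tr(\mathbf D\,\breve{\mathbf R}\,\widetilde{\mathbf R})\bigr|\le\|\mathbf D\|_2\,\|\breve{\mathbf R}\widetilde{\mathbf R}\|_2\le\|\mathbf D\|_2\,\|\breve{\mathbf R}\|_2\,\|\widetilde{\mathbf R}\|,$$
where $\|\widetilde{\mathbf R}\|\le v^{-1}$ and, by the identity $\Tr(\breve{\mathbf R}\breve{\mathbf R}^*)=v^{-1}\Im\Tr\breve{\mathbf R}$, one has $\|\breve{\mathbf R}\|_2^2=nv^{-1}\Im\breve m_n(z)$; it is important to split the norm so that $\widetilde{\mathbf R}$ enters only through its operator norm. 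Hence $|\widetilde m_n(z)-\breve m_n(z)|\le\|\mathbf D\|_2(\Im\breve m_n(z))^{1/2}(\sqrt n\,v^{3/2})^{-1}$. For $z\in\mathbb G$ the matrix $\breve{\mathbf W}$ satisfies conditions \eqref{moment}--\eqref{trun} (entries bounded by $4cn^{1/4}$, zero mean, $\E\breve X_{jk}^2=\sigma_{jk}^{-2}\E\widetilde X_{jk}^2=1$, finite fourth moment), so Theorem \ref{stiltjesmain} gives $\E|\breve m_n(z)-s(z)|^p\le(Cp)^p(nv)^{-p}$; since $nv$ exceeds a fixed constant on $\mathbb G$, this implies $\E(\Im\breve m_n(z))^p\le2^{p}(|s(z)|^p+\E|\breve m_n(z)-s(z)|^p)\le C^p$ for $p\le C'\log n$. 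Combining the last estimate with H\"older's inequality and the two moment bounds,
\begin{align*}
 \E|\widetilde m_n(z)-\breve m_n(z)|^p&\le\frac{1}{n^{p/2}v^{3p/2}}\bigl(\E\|\mathbf D\|_2^{2p}\bigr)^{1/2}\bigl(\E(\Im\breve m_n(z))^p\bigr)^{1/2}\\
 &\le\frac{(Cn^{-2})^{p/2}\,C^{p/2}}{n^{p/2}v^{3p/2}}=\frac{C^p}{(nv)^{3p/2}}\le\frac{Cp}{(nv)^{3/2}},
\end{align*}
for $z\in\mathbb G$ and $p\le C'\log n$, which is the assertion.

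I expect the main obstacles to be two. The first is the quantitative truncation estimate $|\sigma_{jk}-1|=O(n^{-3/2})$: this is exactly where the eighth moment hypothesis is used at full strength, since under only four moments one gets merely $O(n^{-1/2})$, insufficient to absorb the negative powers of $v$. The second is producing the exponent $v^{-3/2}$ rather than the trivial $v^{-2}$ coming from $\|\breve{\mathbf R}\|_2^2\le nv^{-2}$; this rests on the identity $\|\breve{\mathbf R}\|_2^2=nv^{-1}\Im\breve m_n$ together with the a priori boundedness of $\Im\breve m_n$ on $\mathbb G$ furnished by Theorem \ref{stiltjesmain}, and it is essential that the Cauchy--Schwarz step be arranged so that $\widetilde{\mathbf R}$ appears only via its operator norm, so that no separate (and less convenient) control of $\Im\widetilde m_n$ is needed.
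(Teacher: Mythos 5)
Your proposal is correct and follows essentially the same route as the paper: the resolvent identity $\widetilde{\mathbf R}-\breve{\mathbf R}=\widetilde{\mathbf R}(\breve{\mathbf W}-\widetilde{\mathbf W})\breve{\mathbf R}$, the split $|\Tr(\cdot)|\le\|\breve{\mathbf W}-\widetilde{\mathbf W}\|_2\,\|\breve{\mathbf R}\|_2\,\|\widetilde{\mathbf R}\|$ with $\frac1n\|\breve{\mathbf R}\|_2^2=v^{-1}\im\breve m_n(z)$, the eighth-moment truncation estimate $|1-\sigma_{jk}|=O(n^{-3/2})$, boundedness of moments of $\breve m_n$ via Theorem \ref{stiltjesmain}, and a Rosenthal-type bound for the Hilbert--Schmidt norm of the difference. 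The only cosmetic difference is that you bound $\E\|\mathbf D\|_2^{2p}$ directly from the entrywise truncation bound, while the paper first writes $\|\breve{\mathbf W}-\widetilde{\mathbf W}\|_2^2\le Cn^{-3}\|\breve{\mathbf W}\|_2^2$ and applies Rosenthal to $\|\breve{\mathbf W}\|_2^2$; both give the same order.
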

\begin{proof}
 Using the resolvent equality \eqref{reseq}, we get
 \begin{equation}\notag
  \widetilde m_n(z)-\breve m_n(z)=\frac1n\Tr\widetilde{\mathbf R}(\breve{\mathbf W}-\widetilde{\mathbf W})\breve{\mathbf R}
 \end{equation}
Using the obvious inequalities $|\Tr\mathbf A\mathbf B|\le \|\mathbf A\|_2\|\mathbf B\|_2$ \newline
and $\|\mathbf A\mathbf B\|_2\le\|\mathbf A\|\|\mathbf B\|_2$, we obtain
\begin{align}\label{breve}
 |\widetilde m_n(z)-\breve m_n(z)|\le \frac1n\|\widetilde{\mathbf R}\|\|\breve{\mathbf R}\|_2\|\breve{\mathbf W}-\widetilde{\mathbf W}\|_2
\end{align}
Note that
\begin{equation}\label{w1}
\|\breve{\mathbf W}-\widetilde{\mathbf W}\|_2^2=\frac1n\sum_{j,k=1}^n(1-\sigma_{jk})^2{\breve X}_{jk}^2.
 \end{equation}
 Furthermore, we observe that
 \begin{equation}\label{w2}
  (1-\sigma_{jk})^2\le (1-\sigma_{jk}^2)^2\le (\E X_{jk}^2\mathbb I\{|X_{jk}|\ge cn^{\frac14}\})^2\le C\mu_8^2n^{-3}.
 \end{equation}
Relations \eqref{w1} and \eqref{w2} together imply
\begin{equation}\notag
\|\breve{\mathbf W}-\widetilde{\mathbf W}\|_2^2\le Cn^{-3}\|\breve{\mathbf W}\|_2^2.
\end{equation}

Note that the $\breve X_{jl}$ satisfy the condition
 \begin{equation}\label{trunc21}
  |\breve X_{jl}|\le Dn^{\frac14}, \quad\E \breve X_{jl}=0 \text{ and }\E {\breve X}_{jk}^2=1,
 \end{equation}
 for some absolute constant $D$. We may apply Theorem \ref{stiltjesmain}. According to this theorem we have, for $q\le C\log n$,
 \begin{equation}\label{imr1}
  \E|m_n(z)|^q\le C^q.
 \end{equation}
Furthermore, we note that, by Lemma \ref{resol00} in the Appendix
\begin{equation}\label{imr}
 \frac1n\|\breve{\mathbf R}\|_2^2= v^{-1}\im \breve m_n(z)\le v^{-1}|\breve m_n(z)|.
\end{equation}
Inequality \eqref{breve} yields
\begin{equation}\notag
 \E|\widetilde m_n(z)-\breve m_n(z)|^p\le v^{-p}n^{-\frac{3p}2}\E^{\frac12}(n^{-\frac12}\|\breve{\mathbf R}\|_2)^{2p}\E^{\frac12}(n^{-\frac12}\|\breve{\mathbf W}\|_2)^{2p}
\end{equation}
Applying inequalities \eqref{imr1} and \eqref{imr}, we get
\begin{equation}\label{emn}
 \E|\widetilde m_n(z)-\breve m_n(z)|^p\le C^pn^{-\frac{3p}2}v^{-\frac{3p}2}\E^{\frac12}(n^{-\frac12}\|\breve{\mathbf W}\|_2)^{2p}.
\end{equation}
To bound the last factor in the r.h.s. of \eqref{emn} we use standard arguments based on Rosenthal's inequality.
We may write
\begin{align}\label{enm1}
 \E(n^{-1}\|\breve{\mathbf W}\|_2^2)^{p}&=\frac1{n^{2p}}\E(\sum_{j,k}{\breve X}_{jk}^2)^p\le \frac{2^p}{n^{2p}}\Big(\sum_{j,k}\E{\breve X}_{jk}^2\Big)^p\notag\\&+
 \frac{C^pp^p}{n^{2p}}\Big(\big(\sum_{j,k}\E({\breve X}_{jk}^2-1)^2\big)^{\frac p2}+\sum_{j,k=1}^n\E|{\breve X}_{jk}^2-1|^p\Big)\le C^pp^p
\end{align}

Using now inequalities \eqref{enm1} and \eqref{emn}, we get the claim.
Thus, Lemma \ref{trunc2} is proved.
\end{proof}
\begin{lem}\label{trunc1}
\begin{equation}\notag
 \E|\widetilde m_n(z)-\widehat m_n(z)|\le \frac{C\mu_8}{n^{\frac32}v^{\frac32}}.
 \end{equation}
\end{lem}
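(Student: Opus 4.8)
The plan is to follow the proof of Lemma~\ref{trunc2}, with the re-normalization step replaced by the centering step. Write $\widehat{\mathbf W}-\widetilde{\mathbf W}=\frac1{\sqrt n}\bigl(\E\widehat X_{jk}\bigr)_{j,k=1}^n$, a \emph{deterministic} matrix. The first task is to bound its Hilbert--Schmidt norm: since $\E X_{jk}=0$ and $\widehat X_{jk}=X_{jk}\mathbb I\{|X_{jk}|\le cn^{1/4}\}$ by \eqref{trunc000},
\begin{equation}\notag
 |\E\widehat X_{jk}|=\bigl|\E X_{jk}\mathbb I\{|X_{jk}|>cn^{\frac14}\}\bigr|\le\E|X_{jk}|\,\mathbb I\{|X_{jk}|>cn^{\frac14}\}\le\frac{\E|X_{jk}|^{8}}{(cn^{\frac14})^{7}}\le C\mu_8 n^{-\frac74},
\end{equation}
whence $\|\widehat{\mathbf W}-\widetilde{\mathbf W}\|_2^{2}=\frac1n\sum_{j,k=1}^{n}|\E\widehat X_{jk}|^{2}\le C\mu_8^{2}n^{-\frac52}$, i.e.\ $\|\widehat{\mathbf W}-\widetilde{\mathbf W}\|_2\le C\mu_8 n^{-\frac54}$.

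Next, exactly as in \eqref{breve} (the resolvent equality \eqref{reseq} together with $|\Tr\mathbf A\mathbf B|\le\|\mathbf A\|_2\|\mathbf B\|_2$ and $\|\mathbf A\mathbf B\|_2\le\|\mathbf A\|\,\|\mathbf B\|_2$), one has
\begin{equation}\notag
 |\widehat m_n(z)-\widetilde m_n(z)|\le\frac1n\|\widehat{\mathbf R}\|\,\|\widetilde{\mathbf R}\|_2\,\|\widehat{\mathbf W}-\widetilde{\mathbf W}\|_2\le\frac{C\mu_8}{n^{\frac94}v}\,\|\widetilde{\mathbf R}\|_2,
\end{equation}
using $\|\widehat{\mathbf R}\|\le v^{-1}$ in the last step. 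I would then take expectations, apply the Cauchy--Schwarz inequality, and use Lemma~\ref{resol00} in the form $\frac1n\|\widetilde{\mathbf R}\|_2^{2}=v^{-1}\im\widetilde m_n(z)$ (cf.\ \eqref{imr}), to get
\begin{equation}\notag
 \E|\widehat m_n(z)-\widetilde m_n(z)|\le\frac{C\mu_8}{n^{\frac94}v}\bigl(\E\|\widetilde{\mathbf R}\|_2^{2}\bigr)^{\frac12}=\frac{C\mu_8}{n^{\frac94}v}\Bigl(\frac nv\,\E\im\widetilde m_n(z)\Bigr)^{\frac12}.
\end{equation}

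The one point that is not mechanical is the a priori bound $\E\im\widetilde m_n(z)\le C$ for $z\in\mathbb G$. I would obtain it by bootstrapping off results already available: by Lemma~\ref{trunc2} with $p=1$ and by Theorem~\ref{stiltjesmain} applied to the normalized matrix $\breve{\mathbf W}$ (which satisfies \eqref{trunc21}),
\begin{equation}\notag
 \E\im\widetilde m_n(z)\le\E|\widetilde m_n(z)|\le\E|\widetilde m_n(z)-\breve m_n(z)|+\E|\breve m_n(z)-s(z)|+|s(z)|\le C(nv)^{-\frac32}+C(nv)^{-1}+1\le C,
\end{equation}
since $nv\ge1$ on $\mathbb G$; alternatively one may cite the a priori bound on the expected spectral density of the truncated matrices collected in the Appendix. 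Inserting this estimate gives
\begin{equation}\notag
 \E|\widehat m_n(z)-\widetilde m_n(z)|\le\frac{C\mu_8}{n^{\frac94}v}\cdot C\,n^{\frac12}v^{-\frac12}=C\mu_8\,n^{-\frac74}v^{-\frac32}\le\frac{C\mu_8}{n^{\frac32}v^{\frac32}},
\end{equation}
which is the assertion. The main obstacle is thus exactly the a priori bound $\E\im\widetilde m_n(z)\le C$: using only the trivial estimate $\im\widetilde m_n\le v^{-1}$ would yield the weaker $C\mu_8 n^{-7/4}v^{-2}$, which fails to be $O(n^{-3/2}v^{-3/2})$ on the part of $\mathbb G$ where $v\ll n^{-1/2}$ (it even diverges for $v$ of order $n^{-1}\log^4 n$), so exploiting that $\widetilde{\mathbf W}$ has expected spectral density of order one on the relevant range of $z$ is essential.
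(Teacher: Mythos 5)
Your proposal is correct and follows essentially the same route as the paper: the resolvent identity with the bound $|\widetilde m_n-\widehat m_n|\le n^{-1}\|\widehat{\mathbf R}\|\,\|\widetilde{\mathbf R}\|_2\,\|\E\widehat{\mathbf W}\|_2$, the estimate $|\E\widehat X_{jk}|\le C\mu_8 n^{-7/4}$, and control of $\|\widetilde{\mathbf R}\|_2$ through $\im\widetilde m_n$ using Lemma \ref{trunc2} together with the bound on $\breve m_n$ from Theorem \ref{stiltjesmain} — exactly the ingredients behind \eqref{dif1}--\eqref{dif4}. Your explicit bootstrap for $\E\im\widetilde m_n(z)\le C$ just spells out what the paper uses implicitly at \eqref{dif2}, and your power count ($n^{-7/4}v^{-3/2}$) is in fact the accurate version of the paper's, both sufficing for the stated bound.
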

\begin{proof}
According to the resolvent equality \eqref{reseq}, we have
\begin{equation}\notag
 \widetilde m_n(z)-\widehat m_n(z)=\frac1n\Tr(\widetilde{\mathbf R}-\widehat{\mathbf R})
 =\frac1n\Tr (\widetilde{\mathbf W}-\widehat{\mathbf W})\widetilde{\mathbf R}\widehat{\mathbf R}.
\end{equation}
Similar to \eqref{breve} we get
\begin{equation}\label{dif1}
|\widetilde m_n(z)-\widehat m_n(z)|\le n^{-1}\|\widehat{\mathbf R}\|\|\widetilde{\mathbf R}\|_2\|\E\widehat{\mathbf W}\|_2.
\end{equation}
Furthermore, we note that
\begin{align}\notag
 |\E \widehat X_{jk}|&\le Cn^{-\frac 74}\mu_8.
\end{align}
This yields
\begin{align}\notag
 n^{-\frac12}\|\E\widehat{\mathbf W}\|_2\le Cn^{-\frac 94}\text{ a. s.}
\end{align}

By Lemma \ref{trunc2}, we have
\begin{equation}\label{dif2}
 \E|\widetilde m_n(z)|^p\le C^p.
\end{equation}
This implies that
\begin{equation}\label{dif3}
\E(\frac1{\sqrt n}\|\widetilde {\mathbf R}\|_2)^p\le C^p v^{-\frac p2} 
\end{equation}
Combining now inequalities \eqref{dif1}, \eqref{dif2} and \eqref{dif3}, we get
\begin{equation}\label{dif4}
\E|\widetilde m_n(z)-\widehat m_n(z)|^p\le\frac {C\mu_8}{n^{\frac{9p}4}v^{\frac{3p}2}}\le \frac {C^p}{n^{\frac{3p}2}v^{\frac{3p}2}}.
\end{equation}
Thus Lemma \ref{trunc1} is proved.
\end{proof}
Lemmas \ref{trunc}, \ref{trunc2}, \ref{trunc1} together imply the result of Corollary \ref{stieltjesmaincor}.
Thus Corollary \ref{stieltjesmaincor} is proved.
\section{Proof of Theorem \ref{stiltjesmain}} The main problem in proving Theorem \ref{stiltjesmain} is the the derivation of the following bound 
\begin{equation}\notag
 \E|R_{jj}|^p\le C^p,
\end{equation}
for $j=1,\ldots,n$ and any $z\in\mathbb G$.
This bound was shown in \cite{GT:2014}.
To prove this bound we used an approach similar to that of Lemma 3.4 in \cite{SchleinMaltseva:2013}. 
We succeeded in the case of finite moments only developing  new  bounds of quadratic forms of the following type
\begin{equation}\notag
 \E|\frac1n\sum_{l\ne k}X_{jl}X_{jk}R^{(j)}_{kl}|^p\le\left(\frac {Cp}{\sqrt {nv}}\right)^p. 
\end{equation}
These estimates are based on a recursive scheme of using  Rosenthal's and Burkholder's inequalities.
\subsection{The Key Lemma}\label{key} In this Section we state  auxiliary lemmas needed for the proof of Theorem \ref{stiltjesmain},
which have been  proved in \cite{GT:2014}.
Recall that the
Stieltjes transform of an empirical  spectral distribution function $\mathcal F_n(x)$, say $m_n(z)$, is given by
\begin{equation}\label{trace}
m_n(z)=\frac1n\sum_{j=1}^n
R_{jj}=\frac1{n}\Tr\mathbf  R.
\end{equation}
 (see, for instance, equality (4.3)
in \cite{GT:03}).

For any $\mathbb J\subset T$ denote $\mathbb T_{\mathbb J}=\mathbb T\setminus\mathbb J$.
For any $\mathbb J\subset \mathbb T$ and $j\in\mathbb T_{\mathbb J}$ define  the quadratic form,
$$
Q^{(\mathbb J,j)}:=\frac1n\sum_{l\in\mathbb T_{\mathbb J}}\Big|\sum_{r\in \mathbb T_{\mathbb J}\cap\{1,\ldots,l-1\}}X_{jl}R^{(\mathbb J,j)}_{kl}\Big|^2
$$
and
$$
\widetilde Q^{(\mathbb J,j)}:=\frac1n\sum_{l\in\mathbb T_{\mathbb J}}\Big|\sum_{r\in \mathbb T_{\mathbb J}\cap\{1,\ldots,l-1\}}X_{jl}[(\mathbf R^{(\mathbb J,j)})^2]_{kl}\Big|^2.
$$

\begin{lem}\label{bp1} Assuming the conditions of Theorem \ref{main} there exist  constants $A_1, C, C_3$ depending on $\mu_4$ and $D$ only such that 
we have for $v\ge v_0$  and  $p\le A_1(nv)^{\frac14}$ and for any $\mathbb J\subset\mathbb T$ such that $|\mathbb J|\le C\log n$,

\begin{align}\label{q1}
 \E (Q^{(\mathbb J,j)})^p\le(C_3p)^{2p}v^{-p}.
\end{align}

\end{lem}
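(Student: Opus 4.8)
The plan is to establish the bound on $\E(Q^{(\mathbb J,j)})^p$ by a recursive martingale argument, exploiting the fact that the inner sum in the definition of $Q^{(\mathbb J,j)}$ is a partial sum (over $r\in\mathbb T_{\mathbb J}\cap\{1,\dots,l-1\}$), so that conditioning on the $\sigma$-algebras generated by successive rows produces a martingale-difference structure. Fix $j$ and $\mathbb J$, and write $Q=Q^{(\mathbb J,j)}$. For each $l$ set $\xi_l:=\sum_{r}X_{jr}R^{(\mathbb J,j)}_{rl}$ (correcting the index in the statement), so that $Q=\frac1n\sum_l|\xi_l|^2$. The key observation is that, conditionally on the submatrix $\mathbf R^{(\mathbb J,j)}$ (which does not depend on the $j$th row), each $\xi_l$ is a sum of independent mean-zero terms $X_{jr}R^{(\mathbb J,j)}_{rl}$, so a conditional Rosenthal inequality applies. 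The diagonal part of $\sum_l|\xi_l|^2$ — i.e.\ the terms $\sum_r X_{jr}^2|R^{(\mathbb J,j)}_{rl}|^2$ — has conditional expectation $\frac1n\sum_{r,l}|R^{(\mathbb J,j)}_{rl}|^2$, which by the resolvent identity (Lemma \ref{resol00} and \eqref{imr}-type bounds) is at most $v^{-2}$ up to constants; the off-diagonal part is again a martingale difference sum to which Burkholder's inequality applies.

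The recursion enters because after applying Rosenthal/Burkholder once we are left with expressions of the same shape but involving the resolvent $\mathbf R^{(\mathbb J\cup\{j\},\cdot)}$ of a smaller matrix, together with lower moments; iterating $O(\log n)$ times and keeping track of the constants produces the factor $(C_3 p)^{2p}$ and the power $v^{-p}$. The constraint $p\le A_1(nv)^{1/4}$ is exactly what is needed for the error terms generated at each stage of the recursion to remain subdominant — the truncation bound \eqref{trun}, giving $|X_{jr}|\le Dn^{1/4}$, controls the "large deviation" tails in Rosenthal's inequality and forces the admissible range of $p$. Since this is precisely the recursive scheme of Rosenthal's and Burkholder's inequalities referred to just above in Subsection \ref{key} and carried out in detail in \cite{GT:2014}, I would assemble the estimate by invoking those computations; the novelty here over the naive bound is the $v^{-p}$ rather than $v^{-2p}$, which comes from using $\frac1n\sum_{r,l}|R^{(\mathbb J,j)}_{rl}|^2 = v^{-1}\im m_n^{(\mathbb J,j)}(z)$ rather than a crude operator-norm bound.

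The main obstacle is bookkeeping of the constants through the $O(\log n)$ levels of recursion: one must check that each application of Rosenthal's and Burkholder's inequalities multiplies the constant by a fixed factor and the power of $p$ by a bounded amount, so that after $|\mathbb J|+1\le C\log n +1$ steps the accumulated constant is still of the form $C_3^{p}$ with $C_3$ absolute (depending only on $\mu_4$ and $D$), rather than blowing up like $(\log n)^{p}$ or worse. This is where the hypothesis $|\mathbb J|\le C\log n$ is used. I would organize the argument so that the inductive hypothesis is exactly the statement \eqref{q1} with $\mathbb J$ replaced by $\mathbb J\cup\{j\}$ (and a correspondingly adjusted constant and range of $p$), the base case being $|\mathbb J|$ of maximal size where the quadratic form is trivially bounded, and the inductive step being one round of Rosenthal plus Burkholder plus the resolvent bound \eqref{imr}. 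A parallel argument handles $\widetilde Q^{(\mathbb J,j)}$ with $(\mathbf R^{(\mathbb J,j)})^2$ in place of $\mathbf R^{(\mathbb J,j)}$, using $\frac1n\sum_{r,l}|[(\mathbf R^{(\mathbb J,j)})^2]_{rl}|^2\le v^{-4}$ in place of $v^{-2}$ and picking up extra powers of $v^{-1}$ accordingly.
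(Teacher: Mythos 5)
The paper offers no argument for Lemma \ref{bp1} beyond the citation ``see \cite{GT:2014}, Lemma 5.4, Section 5,'' so your decision to invoke those computations is consistent with what the authors themselves do, and the ingredients you single out (conditional Burkholder for the martingale differences $\xi_l$, conditional Rosenthal for the inner sums, the truncation \eqref{trun} as the source of the restriction $p\le A_1(nv)^{\frac14}$, and the identity $\frac1n\sum_{r,l}|R^{(\mathbb J,j)}_{rl}|^2=v^{-1}\im m_n^{(\mathbb J,j)}(z)$ from Lemma \ref{resol00}) are indeed the ones that scheme is built from.

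However, the recursion you sketch is not one that closes, and it is not the recursion used in \cite{GT:2014}. An induction on $|\mathbb J|$ with base case ``$|\mathbb J|$ of maximal size where the quadratic form is trivially bounded'' has no valid starting point: for $|\mathbb J|\le C\log n$ the matrix $\mathbf W^{(\mathbb J,j)}$ is still essentially of full size and $Q^{(\mathbb J,j)}$ is exactly as hard to bound as for $\mathbb J=\emptyset$; the hypothesis $|\mathbb J|\le C\log n$ only serves to keep $m_n^{(\mathbb J,j)}$ within $O(|\mathbb J|/(nv))$ of $m_n$, which is harmless because $nv\ge A_0\log^4 n$. More importantly, your treatment of the diagonal term only gives $\frac1n\sum_{r,l}|R^{(\mathbb J,j)}_{rl}|^2\le v^{-2}$ and hence $\E Q^p\le (Cp)^{2p}v^{-2p}$; to reach the stated $v^{-p}$ one needs moment bounds of the form $\E\bigl(\im m_n^{(\mathbb J,j)}(z)\bigr)^p\le C^p$ (more precisely, bounds on moments of $\im R^{(\mathbb J,j)}_{ll}$, cf.\ Corollary \ref{cor8}) uniformly down to $v\ge v_0$, and this is precisely the nontrivial content of the lemma: those bounds themselves require control of the quadratic forms at small $v$. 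In \cite{GT:2014} this circularity is broken by a bootstrap in the spectral parameter, not in $|\mathbb J|$: the bound is elementary for $v$ of order one, where $\|\mathbf R^{(\mathbb J,j)}\|\le v^{-1}$ is $O(1)$, and one descends to $v_0$ through $O(\log n)$ multiplicative steps, using at each step that the imaginary part of the resolvent at scale $v$ is controlled by a bounded multiple of its value at scale $sv$, with the constraints $p\le A_1(nv)^{\frac14}$ and \eqref{trun} keeping the Rosenthal/Burkholder error terms subdominant at every step. Your sketch would have to be reorganized around such a descent in $v$ (or an equivalent bootstrap for $\E\,\im^p R^{(\mathbb J,j)}_{ll}$) before it amounts to a proof; as written, the step that produces $v^{-p}$ rather than $v^{-2p}$ is assumed rather than established.
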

\begin{cor}\label{q1*}Assuming the conditions of Theorem \ref{main} and for $z=u+iV$ with $V=4$, we have
 \begin{align}\notag
 \E (Q^{(\mathbb J,j)})^p\le C^pp^{2p}.
\end{align}

\end{cor}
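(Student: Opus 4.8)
The plan is to obtain Corollary \ref{q1*} as a direct specialization of Lemma \ref{bp1} to the horizontal line $v=V=4$. First I would check that the hypotheses of Lemma \ref{bp1} are in force there. The lemma requires $v\ge v_0$; since $v_0=A_0n^{-1}\log^4n\to 0$ as $n\to\infty$, we have $V=4\ge v_0$ for all $n$ sufficiently large (and one may always enlarge the generic constant to absorb the finitely many small $n$). The requirement $|\mathbb J|\le C\log n$ is already among the standing hypotheses. Finally, the admissible range of exponents in Lemma \ref{bp1} becomes $p\le A_1(nV)^{\frac14}=A_1(4n)^{\frac14}$, which in particular contains every $p\le C'\log n$ relevant to the proof of Theorem \ref{main}, because $\log n=o(n^{\frac14})$.

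With these checks in place the estimate is immediate: applying \eqref{q1} with $v=4$ gives
\begin{equation}\notag
 \E (Q^{(\mathbb J,j)})^p\le (C_3p)^{2p}\,4^{-p}\le C_3^{2p}p^{2p},
\end{equation}
and choosing the generic constant $C:=C_3^2$ (one could even retain the gain $4^{-p}\le 1$) yields the claimed bound $\E (Q^{(\mathbb J,j)})^p\le C^pp^{2p}$. The same substitution applies verbatim to $\widetilde Q^{(\mathbb J,j)}$ if the analogous bound is needed.

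There is essentially no obstacle here: the whole content of the statement is packed into Lemma \ref{bp1}, whose proof — the recursive application of Rosenthal's and Burkholder's inequalities to the quadratic forms $Q^{(\mathbb J,j)}$, carried out in \cite{GT:2014} — is the genuinely hard part. The only point worth flagging is the (harmless) $n$-dependence: the factor $v^{-p}$ that would deteriorate for $v$ of order $v_0\sim n^{-1}\log^4 n$ collapses to the bounded factor $4^{-p}\le 1$ once $V=4$, which is precisely why the $v$-dependence disappears from the right-hand side of the corollary.
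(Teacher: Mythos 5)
Your proposal is correct and matches the paper's own argument, which likewise obtains the corollary by simply specializing Lemma \ref{bp1} to $v=V=4$, so that the factor $v^{-p}$ becomes $4^{-p}\le 1$ and the bound $(C_3p)^{2p}$ is absorbed into $C^pp^{2p}$. Your additional verification of the hypotheses ($V\ge v_0$, the exponent range, $|\mathbb J|\le C\log n$) is harmless bookkeeping that the paper leaves implicit.
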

\begin{proof}
 The result immediately follows from Lemma \ref{bp1}
\end{proof}

\begin{Proof of} {\it Lemma \ref{bp1}}. For the proof of Lemma \ref{bp1} see \cite{GT:2014}, Lemma 5.4, Section 5.
 %
\end{Proof of}
\begin{lem}\label{bp1*} Assuming the conditions of Theorem \ref{main} there exist  constants $A_1, C, C_3$ depending on $\mu_4$ and $D$ only such that 
we have for $v\ge v_0$  and  $p\le A_1(nv)^{\frac14}$ and for any $\mathbb J\subset\mathbb T$ such that $|\mathbb J|\le C\log n$,

\begin{align}\label{q1**}
 \E (\widetilde Q^{(\mathbb J,j)})^p\le(C_3p)^{2p}v^{-3p}.
\end{align}

\end{lem}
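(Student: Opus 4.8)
The plan is to mirror the proof of Lemma \ref{bp1} (i.e. \cite{GT:2014}, Lemma 5.4), since $\widetilde Q^{(\mathbb J,j)}$ has exactly the same martingale structure as $Q^{(\mathbb J,j)}$ with the resolvent entries $R^{(\mathbb J,j)}_{kl}$ replaced by the entries $[(\mathbf R^{(\mathbb J,j)})^2]_{kl}$ of the squared resolvent. The only difference that propagates through the argument is a change in the available deterministic and second-moment bounds: whereas $\sum_l |R^{(\mathbb J,j)}_{kl}|^2 = [(\mathbf R^{(\mathbb J,j)})(\overline{\mathbf R^{(\mathbb J,j)}})^{\!*}]_{kk}$ is controlled by $v^{-1}\im R^{(\mathbb J,j)}_{kk}\le v^{-2}$, the analogous quantity $\sum_l |[(\mathbf R^{(\mathbb J,j)})^2]_{kl}|^2 = [(\mathbf R^{(\mathbb J,j)})^2(\overline{(\mathbf R^{(\mathbb J,j)})^2})^{\!*}]_{kk}$ is bounded by $v^{-2}\cdot v^{-2} = v^{-4}$ using $\|\mathbf R^{(\mathbb J,j)}\|\le v^{-1}$ twice, and more precisely by $v^{-3}\im R^{(\mathbb J,j)}_{kk}$; this is the source of the extra factor $v^{-2p}$ in \eqref{q1**} relative to \eqref{q1}.

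Concretely, first I would fix $j$ and $\mathbb J$ and view $\widetilde Q^{(\mathbb J,j)}$ as a sum over $l\in\mathbb T_{\mathbb J}$ of squares of the partial sums $Y_l := \sum_{r\in\mathbb T_{\mathbb J}\cap\{1,\dots,l-1\}} X_{jr}[(\mathbf R^{(\mathbb J,j)})^2]_{rl}$, which form a martingale in the natural filtration generated by the $X_{jr}$ (the matrix $\mathbf R^{(\mathbb J,j)}$ being independent of the $j$th row). Then I would apply Burkholder's inequality to pass from the $p$-th moment of $\widetilde Q^{(\mathbb J,j)}$ to the $p$-th moment of the associated quadratic variation, and Rosenthal's inequality to split the resulting diagonal and off-diagonal contributions into a "main" term coming from $\E X_{jr}^2 = 1$ and fluctuation terms controlled by $\E|X_{jr}|^4\le\mu_4$ together with the deterministic bound $|X_{jr}|\le Dn^{1/4}$. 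This is precisely the recursive Rosenthal--Burkholder scheme used for Lemma \ref{bp1}; at each stage of the recursion one replaces a sum of the form $\frac1n\sum_l|\sum_r X_{jr} B_{rl}|^2$ by $\frac1n\sum_l (\sum_r|B_{rl}|^2)$ plus lower-order terms, where now $B = (\mathbf R^{(\mathbb J,j)})^2$.

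The key deterministic inputs I would use are: (i) $\frac1n\sum_{l\in\mathbb T_{\mathbb J}}\sum_{r\in\mathbb T_{\mathbb J}}|[(\mathbf R^{(\mathbb J,j)})^2]_{rl}|^2 = \frac1n\Tr (\mathbf R^{(\mathbb J,j)})^2(\overline{\mathbf R^{(\mathbb J,j)}})^2 \le v^{-2}\cdot\frac1n\Tr\mathbf R^{(\mathbb J,j)}(\overline{\mathbf R^{(\mathbb J,j)}})^{\!*} = v^{-3}\im m_n^{(\mathbb J,j)}(z)\le v^{-4}$, and (ii) the row bound $\sum_{r}|[(\mathbf R^{(\mathbb J,j)})^2]_{rl}|^2\le v^{-4}$ uniformly in $l$, obtained from $\|(\mathbf R^{(\mathbb J,j)})^2\|\le v^{-2}$. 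Comparing with the corresponding bounds $v^{-2}$ and $v^{-2}$ used in Lemma \ref{bp1}, each occurrence gains a factor $v^{-2}$; since the recursion in Lemma \ref{bp1} produces the bound $(C_3 p)^{2p} v^{-p}$, carrying the extra factors through verbatim yields $(C_3 p)^{2p} v^{-3p}$, which is exactly \eqref{q1**}. Finally I would note that the restriction $p\le A_1(nv)^{1/4}$ and $|\mathbb J|\le C\log n$ enters in precisely the same way as in Lemma \ref{bp1}, namely to absorb the $n$-dependence of the deterministic bound $Dn^{1/4}$ on the entries and to control the number of removed rows/columns uniformly.

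The main obstacle is bookkeeping rather than a genuinely new idea: one must check that every step of the (already nontrivial and lengthy) recursive scheme behind Lemma \ref{bp1} continues to hold with $\mathbf R^{(\mathbb J,j)}$ replaced by $(\mathbf R^{(\mathbb J,j)})^2$, and that the exponent of $v^{-1}$ is incremented consistently — by exactly $2$ in the final bound — each time a norm estimate on the resolvent block is invoked. Since the algebraic structure (martingale differences, the filtration, the independence of the $j$th row from $\mathbf R^{(\mathbb J,j)}$, and the Burkholder/Rosenthal splitting) is identical, no new probabilistic estimate is needed; I would therefore present the proof as a reduction, stating the modified deterministic bounds (i)--(ii) explicitly and then invoking the argument of \cite{GT:2014}, Lemma 5.4, \emph{mutatis mutandis}, with the resolvent replaced by its square and every power $v^{-1}$ appearing in a deterministic estimate replaced by $v^{-2}$.
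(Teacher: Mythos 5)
Your proposal is correct and follows essentially the same route as the paper, which simply states that the proof is the same as that of Lemma \ref{bp1} (\cite{GT:2014}, Lemma 5.4) with the resolvent replaced by its square; your explicit identification of the modified deterministic inputs ($\sum_{r}|[(\mathbf R^{(\mathbb J,j)})^2]_{rl}|^2\le v^{-3}\im R^{(\mathbb J,j)}_{ll}$ and $\frac1n\Tr (\mathbf R^{(\mathbb J,j)})^2\overline{(\mathbf R^{(\mathbb J,j)})^2}\le v^{-3}\im m_n^{(\mathbb J,j)}(z)$, cf.\ \eqref{res20} and \eqref{res5}) is precisely what turns the bound $v^{-p}$ of \eqref{q1} into $v^{-3p}$ in \eqref{q1**}.
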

\begin{Proof of} {\it Lemma \ref{bp1}}. The proof of Lemma \ref{bp1*} is similar to the proof of  \cite[Lemma 5.4]{GT:2014}.
 
\end{Proof of}
\subsection{Diagonal Entries of the Resolvent Matrix}\label{diag}

Recall that
\begin{equation}\label{repr001}
 R_{jj}=-\frac1{z+m_n(z)}+\frac1{z+m_n(z)}\varepsilon_jR_{jj},
\end{equation}
or
\be\label{repr01}
 R_{jj}=-\frac1{z+s(z)}+\frac{\Lambda_nR_{jj}}{(z+s(z))}+
\frac1{z+s(z)}\varepsilon_jR_{jj}, \en
where
$\varepsilon_j:=\varepsilon_{j1}+\varepsilon_{j2}+\varepsilon_{j3}+
\varepsilon_{j4}$  with
\begin{align}
\varepsilon_{j1}&:=\frac1{\sqrt n}X_{jj},\quad
\varepsilon_{j2}:=-\frac1n{\sum_{k\ne l
\in\mathbb T_j}}X_{jk}X_{jl}R^{(j)}_{kl},\quad
\varepsilon_{j3}:=-\frac1n{\sum_{k\in\mathbb T_j}}(X_{jk}^2-1)R^{(j)}_{kk},\notag\\
\varepsilon_{j4}&:=\frac1n(\Tr \mathbf R-\Tr\mathbf R^{(j)}),\quad\notag
\Lambda_n:=m_n(z)-s(z)=\frac1n\Tr\mathbf R-s(z),\\
\varepsilon_{j4}&=\Lambda_n -\Lambda_n^{(j)}.\label{epsjn}
\end{align}


\begin{cor}\label{cor8} Assuming the  conditions of Theorem \ref{main}, for all $A_1>0$ there exists a positive constant $A_0=A_0(A_1,\mu_4,D)$ 
depending on $\mu_4, D$ and $A_1$
  such that, for $p\le A_1(nv)^{\frac14}$ and $v\ge v_0= A_0n^{-1}\log^4 n$
there exist an absolute  constants $C_0>0$ such that 
\begin{equation}\label{r11}
\E|R_{jj}|^p\le C_0^p,
\end{equation}
and
\begin{equation}\label{r12}
 \E\frac1{|z+m_n(z)|^p}\le C_0^p
\end{equation}

\end{cor}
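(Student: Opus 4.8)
The two estimates \eqref{r11} and \eqref{r12} are of the same nature, so the plan is to establish \eqref{r11} first and then read off \eqref{r12} from it together with the moment bounds for the $\varepsilon_{j\nu}$ obtained on the way. For \eqref{r11} the natural starting point is the self-consistency equation \eqref{repr01}: collecting the $R_{jj}$-terms it reads $R_{jj}(z+s(z)-\Lambda_n-\varepsilon_j)=-1$, whence $|R_{jj}|=|z+s(z)-\Lambda_n-\varepsilon_j|^{-1}$. Since $s(z)(z+s(z))=-1$ and $|s(z)|\le 1$ by \eqref{scmain0}, we have $|z+s(z)|\ge 1$, so on the event $\{|\Lambda_n|+|\varepsilon_j|\le\tfrac12\}$ one gets $|R_{jj}|\le 2$, while everywhere $|R_{jj}|\le v^{-1}$. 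If $|\Lambda_n|+|\varepsilon_j|$ were small with a tail strong enough to beat $v^{-p}$ this would finish the proof at once; it is not, at a single level, because on $\mathbb G$ the parameter $v$ can be as small as $v_0\sim n^{-1}\log^4 n$. The device that makes it work, carried out in \cite{GT:2014}, is a recursion on the number of deleted rows and columns: one iterates \eqref{repr01} along the tower $\mathbf W,\mathbf W^{(j)},\mathbf W^{(j,k)},\dots$, at each step replacing $R_{jj}^{(\mathbb A)}$ by a quantity measurable with respect to $\mathfrak M^{(\mathbb A\cup\{j\})}$ up to the errors $\varepsilon_{j\nu}^{(\mathbb A)}$ and $\Lambda_n^{(\mathbb A)}$ (so that the conditional mean-zero structure of the $\varepsilon_{j\nu}^{(\mathbb A)}$ can be exploited), and after a number of steps of order $\log n$ writes $R_{jj}$ as a polynomial of bounded degree in these errors plus a remainder.

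The moments of the $\varepsilon_{j\nu}^{(\mathbb A)}$ are then estimated by Rosenthal's and Burkholder's inequalities combined with the quadratic-form bounds for $Q^{(\mathbb A,j)}$ and $\widetilde Q^{(\mathbb A,j)}$ supplied by Lemmas \ref{bp1} and \ref{bp1*}; each unfolding step gains a power of $(nv)^{-1}$, so that after of order $\log n$ steps the remainder is smaller than any fixed power of $n$ and absorbs the polynomial loss $v^{-1}$, whereas the constant $C_0$ does not accumulate over the steps because the $L_p$-norm of the polynomial part is dominated by a geometric series in its degree with ratio $o(1)$. The admissibility $|\mathbb A|\le C\log n$ in Lemmas \ref{bp1}, \ref{bp1*} is exactly what permits the recursion to run for of order $\log n$ steps, and the range $p\le A_1(nv)^{\frac14}$ is dictated by the shape $(C_3p)^{2p}v^{-p}$, $(C_3p)^{2p}v^{-3p}$ of those two bounds. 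For \eqref{r12} I would use that \eqref{repr001} gives $z+m_n(z)=\varepsilon_j-R_{jj}^{-1}$, hence $(z+m_n(z))^{-1}=-R_{jj}(1-\varepsilon_jR_{jj})^{-1}$, together with $|z+m_n(z)|^{-1}\le v^{-1}$ for $z\in\mathbb G$: on $\{|\varepsilon_jR_{jj}|\le\tfrac12\}$ one has $|z+m_n|^{-1}\le 2|R_{jj}|$, controlled by \eqref{r11}, and on the complement the factor $v^{-1}$ is suppressed by the same recursion, unfolding $z+m_n=\varepsilon_j-R_{jj}^{-1}$ and invoking the moment bounds for $\varepsilon_j^{(\mathbb A)}$ and \eqref{r11} for $R_{jj}^{(\mathbb A)}$. (Equivalently, summing \eqref{repr001} over $j$ yields $m_n(z)(z+m_n(z))=T_n(z)-1$ with $T_n=\frac1n\sum_k\varepsilon_kR_{kk}$, so $(z+m_n)^{-1}=m_n/(T_n-1)$; on $\{|T_n|\le\tfrac12\}$ one gets $|z+m_n|^{-1}\le 2|m_n|\le\frac2n\sum_k|R_{kk}|$, again controlled by \eqref{r11}, and the exceptional event is handled by the recursion since the $\varepsilon_k$ are small in $L_p$.)

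The main obstacle is keeping $C_0$ independent of $n$ while descending to the scale $v_0$. A single-level estimate is hopeless: on the small-probability event where the self-consistent denominator is atypically close to $0$ one has only $|R_{jj}|\le v^{-1}$, and $v^{-1}$ raised to the power $p$ of order $\log n$ is not beaten by the probability of that event estimated without iteration, since $v^{-1}\gtrsim n\log^{-4}n$. Only the recursion of depth of order $\log n$ — which turns a per-step gain $(nv)^{-c}$ into a factor $(nv)^{-c\log n}=n^{-c\log n(1+o(1))}$ that defeats the polynomial loss — closes the circle, and the bookkeeping that prevents the constants from compounding over these steps (the geometric decay of the successive polynomial contributions) is the substantive part of \cite{GT:2014}; here its two new inputs are precisely the quadratic-form estimates recorded as Lemmas \ref{bp1} and \ref{bp1*}.
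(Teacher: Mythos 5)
Your proposal takes essentially the same route as the paper, which for this Corollary simply invokes \cite{GT:2014} (Section 6, Corollary 6.10): the self-consistency equation \eqref{repr01}, the depth-$O(\log n)$ recursion over deleted rows and columns, and the quadratic-form bounds of Lemmas \ref{bp1}, \ref{bp1*} via Rosenthal's and Burkholder's inequalities are precisely the scheme of \cite{GT:2014} that both you and the paper defer to for the substantive bookkeeping. Your deduction of \eqref{r12} from \eqref{r11} via $(z+m_n(z))^{-1}=m_n(z)/(T_n(z)-1)$ together with the crude bound $|z+m_n(z)|^{-1}\le v^{-1}$ on the exceptional event is consistent with how this is handled there.
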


\begin{proof}
For the proof of this Corollary see \cite{GT:2014}, Section 6, Corollary 6.10.
\end{proof}

\section{Estimation of $\E| m_n(z)-s(z)|^p$.}\label{expect}
We return now to the representation \eqref{repr01} which may be rewritten as
\begin{align}\label{lambda}
m_n(z)&=\frac1n\sum_{j=1}^n R_{jj}=-\frac1{z+m_n(z)}+\frac{T_n(z)}{z+s(z)+m_n(z)}.\end{align}
We develop the last equality as follows
\begin{align}\label{eq00}
m_n(z)=s(z)+\frac1n\sum_{j=1}^n\frac{\varepsilon_{j4}R_{jj}}{z+s(z)+m_n(z)}+
\E\frac{\widehat T_n(z)}{z+s(z)+m_n(z)},
\end{align}
where 
$$
\widehat T_n=\sum_{\nu=1}^3\frac1n\sum_{j=1}^n\varepsilon_{j\nu}R_{jj}.
$$
Note that, by equality \eqref{3.9},
\begin{equation}\notag
\frac1n\sum_{j=1}^n\varepsilon_{j4}R_{jj}=-\frac1n\frac{d m_n(z)}{dz}.
\end{equation}
Furthermore, we write
\begin{align}\label{represent1}
\Lambda_n=-\frac1n\frac{ m_n'(z)}{z+s(z)+m_n(z)}+
\frac{\widehat T_n(z)}{z+s(z)+m_n(z)}.
\end{align}
We shall investigate the quantity
\begin{equation}\notag
 J_p:=\E|m_n(z)-s(z)|^p=\E|\Lambda_n|^p,\text{ for } p\ge2.
\end{equation}
We introduce the  notation
\begin{equation}\notag
 \varphi_p(z)=\overline z|z|^{p-2}.
\end{equation}
In these terms we may represent $J_p$ as
\begin{equation}\notag
 J_p=\E\Lambda_n\varphi_p(\Lambda_n)
\end{equation}
and expand this equality using the  representation \eqref{represent1} arriving at
\begin{align}\notag
 J_p=-\frac1n\E\frac{ m_n'(z)\varphi_p(\Lambda_n)}{z+s(z)+m_n(z)}+\E\frac{\widehat T_n(z)\varphi_p(\Lambda_n)}{z+s(z)+m_n(z)}.
\end{align}

Denote by
\begin{align}
\mathfrak T_1&=-\frac1n\E\frac{ m_n'(z)\varphi_p(\Lambda_n)}{z+s(z)+m_n(z)},\notag\\
\mathfrak T_2&=\frac{\widehat T_n\varphi_p(\Lambda_n)}{z+s(z)+m_n(z)}.\notag
\end{align}
This is an approach similar to  that used by us in the proof of Lemma 6.1  \cite{GT:2003}.

\subsection{Estimation of $\mathfrak T_1$} Using Lemma \ref{resol00} in the Appendix, we get
\begin{equation}\label{fini1}
 |T_1|\le \frac C{nv}\E|\varphi_p(\Lambda_n)|\le \frac1{nv} J_p^{\frac{p-1}p}.
\end{equation}

\subsection{Estimation  of $\mathfrak T_{2}$}
The quantity $\mathfrak T_{2}$ we represent in the form
\begin{align}
\mathfrak T_{2}=\mathfrak T_{21}+\mathfrak T_{22}+\mathfrak T_{23},\notag
\end{align} 
where, for $\nu=1,2,3$,
\begin{align}
\mathfrak T_{2\nu}&=-\frac1n\sum_{j=1}^n\E\frac{\varepsilon_{j\nu}R_{jj}\varphi(\Lambda)}{z+m_n(z)+s(z)}.\notag
\end{align}
\subsubsection{Estimation of $\mathfrak T_{21}$}
We represent $\mathfrak T_{21}$ in the form
\begin{equation}\notag
 \mathfrak T_{21}=L_1+L_2,
\end{equation}
where
\begin{align}
 L_1&=\E\frac{(\frac1n\sum_{j=1}^n\varepsilon_{j1})\frac1{z+m_n(z)}\varphi(\Lambda_n)}{z+m_n(z)+s(z)}\notag\\
 L_2&=-\frac1n\sum_{j=1}^n\E\frac{\varepsilon_{j1}(R_{jj}+\frac1{z+m_n(z)})\varphi(\Lambda_n)}{z+m_n(z)+s(z)}.\notag
\end{align}
We first consider the term $L_1$.
Applying H\"older's inequality and Lemma \ref{lem00} in the Appendix, we get
\begin{align}
 |L_1|&\le E^{\frac1p}\frac{\Big|\frac1{n\sqrt n}\sum_{j=1}^nX_{jj}\Big|^p}{|z+m(z)|^p|z+s(z)+m_n(z)|^p}
 \E^{\frac{p-1}p}|\varphi(\Lambda_n)|^{\frac p{p-1}}\notag\\&\le\frac1{\sqrt{|z^2-4|}}
 E^{\frac1{2p}}|\frac1{n\sqrt n}\sum_{j=1}^nX_{jj}|^{2p}\E^{\frac1{2p}}\frac1{|z+m_n(z)|^{2p}} \E^{\frac{p-1}p}|\varphi(\Lambda_n)|^{\frac p{p-1}}.\notag
\end{align}
Using now Corollary \ref{cor8} and the inequality $|\varphi(\Lambda_n)|^{\frac p{p-1}}\le |\Lambda_n|^p$, we get
\begin{align}
 |L_1|\le\frac{C_0}{\sqrt{|z^2-4|}}E^{\frac1{2p}}\Big|\frac1{n\sqrt n}\sum_{j=1}^nX_{jj}\Big|^{2p}J_p^{\frac{p-1}p}.\notag
\end{align}
Applying Rosenthal's inequality to the sum $\frac1{n\sqrt n}\sum_{j=1}^nX_{jj}$, we obtain, for $z\in\mathbb G$
\begin{equation}\label{new00}
 |L_1|\le\frac{C_0p}{n\sqrt{|z^2-4|}}\E^{\frac{p-1}p}|\Lambda_n|^p\le \frac{C_0p}{nv}J_p^{\frac{p-1}p}.
\end{equation}

Using the representation \eqref{repr01}, we get
\begin{align}
L_2&=-\frac1n\sum_{j=1}^n\E\frac{\varepsilon_{j1}\varepsilon_j\varphi(\Lambda_n)R_{jj}}{(z+m_n(z))(z+m_n(z)+s(z))}.\notag
\end{align}
This representation yields using
$\eps_{j1}\eps_{j\nu}\le (\eps_{j1}^2 + \eps_{j\nu}^2)/2$,
\begin{align}\label{lada00}
 |L_2|\le \sum_{\mu=1}^4\frac2n\sum_{j=1}^n\E\frac{|\varepsilon_{j\mu}|^2|R_{jj}||\varphi(\Lambda_n)|}{|z+m_n(z)||z+m_n(z)+s(z)|}=:\sum_{\mu=1}^4L_{2\mu}.
\end{align}
First we bound $L_{2\mu}$ for $\mu=1$.
By definition of $\varepsilon_{j1}$, we may write
\begin{align}
 |L_{21}|&\le \frac2{n^2}\sum_{j=1}^n\E\frac{|X_{jj}|^2|R_{jj}||\varphi(\Lambda_n)|}{|z+m_n(z)|^2|z+m_n(z)+s(z)|}.\notag
\end{align}
Applying H\"older's inequality and Lemma \ref{lem00} in the Appendix, we get
\begin{align}\label{new1}
|L_{21}|&\le\frac2{n^2\sqrt{|z^2-4|}}
\sum_{j=1}^n\E^{\frac1{2p}}|R_{jj}|^{2p}
\E^{\frac1{2p}}\frac1{|z+m_n(z)|^{2p}}\notag\\&\qquad\qquad\qquad\qquad\times
\E^{\frac{p-1}p}|X_{jj}|^{\frac{2p}{p-1}}
|\varphi(\Lambda_n)|^{\frac{p}{p-1}}.
\end{align}
Furthermore, we observe that, $\frac{2p}{p-1}\le 4$ for $p\ge2$ and by Lemma \ref{exp*} with $\zeta=\varepsilon_{j4}$, we get
\begin{align}
\E^{\frac{p-1}p}|X_{jj}|^{\frac{2p}{p-1}}
|\varphi(\Lambda_n)|^{\frac{p}{p-1}}
&\le \E^{\frac{p-1}p}|X_{jj}|^{\frac{2p}{p-1}}
|\Lambda_n|^p\notag\\&\le\Big(\E^{\frac{p-1}p}|X_{jj}|^{\frac{2p}{p-1}}(\text{\rm e}
|\Lambda_n^{(j)}|^p+\frac{(p+1)^p}{(nv)^p})\Big)\notag\\&
\le \text{\rm e}\mu_4^{\frac12}\E^{\frac{p-1}p}|\Lambda_n^{(j)}|^p+\frac{(p+1)^{p-1}}{(nv)^{p-1}}\Big).
\end{align}
 Applying the inequality $(a+b)^p\le \text{\rm e}a^p+(p+1)^pb^p$, we get
\begin{equation}
 \E|\varphi(\Lambda_n^{(j)}|^p\le (\text{\rm e}\E|\varphi(\Lambda_n)|^p+(p+1)^p\E|\varphi(\Lambda_n)-\varphi(\Lambda_n^{(j)})|^p).
\end{equation}

Using Corollary \ref{cor8} and Lemmas \ref{exp0} and \ref{exp*} in the Appendix, we obtain 
\begin{align}\label{new1*}
|L_{21}|&\le \frac{Cp}{\sqrt{|z^2-4|}\,n}J_p^{\frac{p-1}p}
 +\frac{(Cp)^{p-1}}{n^pv^{p-1}}.
\end{align}
Applying that $\sqrt{|z^2-4|}\ge Cv$ for $z\in\mathbb G$, we get
\begin{align}\label{new1**}
|L_{21}|&\le
 \frac{Cp}{nv}
J_p^{\frac{p-1}p}
 +\frac{(Cp)^{p-1}}{n^pv^{p-1}}.
\end{align}

Consider now $L_{2\mu}$ for $\mu=2,3$.
Recall that 
\begin{align}\label{new6}
L_{2\mu}=\frac{2}n\sum_{j=1}^n\E\frac{ |\varepsilon_{j\mu}|^2||R_{jj}|\varphi(\Lambda_n)|}{|z+m_n(z)||z+m_n(z)+s(z)|}.
\end{align}
Using H\"older's inequality , we may obtain, for $z\in\mathbb G$,
\begin{align}\label{new7}
 |L_{2\mu}|\le \frac{2} n\sum_{j=1}^n\E^{\frac1{2p}}|R_{jj}|^{2p}\E^{\frac1{2p}}\frac1{|z+m_n(z)|^{2p}}\E^{\frac{p-1}p}
 \frac{|\varepsilon_{j\mu}|^{\frac{2p}{p-1}}}{|z+m_n(z)+s(z)|^{\frac{p}{p-1}}}|\Lambda_n|^p.
\end{align}
Using Corollary \ref{cor8}, Lemma \ref{exp*} with $\zeta=\varepsilon_{j4}$, and Lemma \ref{basic8}, we get
 \begin{align}\label{new7*}
 |L_{2\mu}|&\le \frac{C}n\sum_{j=1}^n\E^{\frac{p-1}p}
 \frac{|\varepsilon_{j\mu}|^{\frac{2p}{p-1}}}{|z+m_n(z)+s(z)|^{\frac{p}{p-1}}}|\Lambda_n^{(j)}|^p\notag\\&\qquad\qquad\qquad+\frac{(cp)^{p-1}}{(nv)^{p-1}n}\sum_{j=1}^n\E^{\frac{p-1}p}
 \frac{|\varepsilon_{j\mu}|^{\frac{2p}{p-1}}}{|z+m_n(z)+s(z)|^{\frac{p}{p-1}}}.
\end{align}
Furthermore, we use inequality \eqref{lar1} in the Appendix. We get, for $z\in\mathbb G$,
\begin{align}\label{lada01}
 |L_{2\mu}|&\le \frac{C}n\sum_{j=1}^n\E^{\frac{p-1}p}
 \frac{|\varepsilon_{j\mu}|^{\frac{2p}{p-1}}}{|z+m_n^{(j)}(z)+s(z)|^{\frac{p}{p-1}}}|\Lambda_n^{(j)}|^p\notag\\&\qquad +\frac{(cp)^{p-1}}{(nv)^{p-1}n}\sum_{j=1}^n\E^{\frac{p-1}p}
 \frac{|\varepsilon_{j\mu}|^{\frac{2p}{p-1}}}{|z+m_n^{(j)}(z)+s(z)|^{\frac{p}{p-1}}}.
\end{align}
Conditioning on $\mathfrak M^{(j)}$ and applying  H\"older's inequality, we get
\begin{align}\label{lada1}
|L_{2\mu}|&\le \frac{2}n\sum_{j=1}^n\E^{\frac{p-1}p}\Big(\E^{\frac p{2(p-1)}}\{
 \frac{|\varepsilon_{j\mu}|^{4}}{|z+m_n^{(j)}(z)+s(z)|^2}\Big|\mathfrak M^{(j)}\}\Big)
 |\Lambda_n^{(j)}|^p\notag\\&\qquad\qquad\qquad+\frac{(cp)^{p-1}}{(nv)^{p-1}n}\sum_{j=1}^n\E^{\frac12}
 \frac{|\varepsilon_{j\mu}|^{4}}{|z+m_n^{(j)}(z)+s(z)|^2}. 
\end{align}
Inequality  \eqref{lada1}, and Corollary \ref{corgot} together imply that, for $z\in\mathbb G$ and for $\mu=2,3$,
\begin{align}\label{lada2}
 |L_{2\mu}|&\le \frac{Cp}{nv}J_p^{\frac{p-1}p}+\frac{(cp)^{p-1}}{(nv)^{p}}. 
\end{align}
Finally, we observe that
\begin{align}
 |L_{24}|\le \frac{Cp}{n^2v^2\sqrt{|z^2-4|}}J_p^{\frac{p-1}p}\le \frac{Cp}{nv}J_p^{\frac{p-1}p}.\notag
\end{align}

Combining inequalities \eqref{lada00}, \eqref{new1**}, \eqref{lada2}, we obtain
\begin{align}\label{new10}
 |L_2|\le\frac{Cp}{nv}J_p^{\frac{p-1}p}+\frac{(cp)^{p-1}}{(nv)^{p}}.
\end{align}
Inequalities \eqref{new10} and \eqref{new00} together imply
\begin{equation}\label{t21}
 |\mathfrak T_{21}|\le \frac{Cp}{nv}J_p^{\frac{p-1}p}+\frac{C^pp^{p-1}}{(nv)^p}.
\end{equation}
\subsection{Estimation of $\mathfrak T_{2\nu}$, for $\nu=2,3$} Recall that
\begin{align}
\mathfrak T_{2\nu}=-\frac1n\sum_{j=1}^n\E\frac{\varepsilon_{j\nu}R_{jj}\varphi(\Lambda_n)}{z+m_n(z)+s(z)}.
\end{align}

 Recall that
\begin{equation}\notag
 \varepsilon_{j4}=\frac1n(\Tr \mathbf R-\Tr \mathbf R^{(j)})=\frac1n(1+\eta_{j0}+\eta_j)R_{jj},
\end{equation}
See relations \eqref{important} -- \eqref{important2}.

\begin{equation}\notag
  \widetilde{\Lambda}_n^{(j)}=\frac1n\Tr\mathbf R^{(j)}+\frac{s(z)}n(1+\eta_{j0}).
\end{equation}
Similarly as in the  Section \ref{firsttypeint} we represent $\mathfrak T_{2\nu}$ in the form
\begin{equation}\notag
 \mathfrak T_{2\nu}=M_1+M_2+M_3+M_4,
\end{equation}
where
\begin{align}
 M_1&=\frac1n\sum_{j=1}^n\E\frac{\varepsilon_{j\nu}\frac1{z+m_n^{(j)}(z)}\varphi(\widetilde{\Lambda}_n^{(j)})}{z+m_n^{(j)}(z)+s(z)},\notag\\
 M_2&=-\frac1n\sum_{j=1}^n\E\frac{\varepsilon_{j\nu}(R_{jj}+\frac1{z+m_n^{(j)}(z)})\varphi(\Lambda_n)}{z+m_n(z)+s(z)},\notag\\
 M_3&=\frac1n\sum_{j=1}^n\E\frac{\varepsilon_{j\nu}\frac1{z+m_n^{(j)}(z)}(\varphi(\Lambda_n)-\varphi(\widetilde{\Lambda}_n^{(j)}))}{z+m_n(z)+s(z)},\notag\\
 M_4&=-\frac1n\sum_{j=1}^n\E\frac{\varepsilon_{j\nu}\frac1{z+m_n^{(j)}(z)}\varphi(\widetilde{\Lambda}_n^{(j)})\varepsilon_{j4}}{(z+m_n^{(j)}(z)+s(z))(z+m_n(z)+s(z))}.
\end{align}
Note that, by independence of $X_{jk}, k\in\mathbb T$  and $\mathfrak M^{(j)}$,
\begin{equation}\label{m1}
 M_1=0.
\end{equation}

Furthermore, we represent
\begin{equation}\notag
M_{2}=M_{21}+M_{22}+M_{23},
\end{equation}
where, for $\mu=1,2,3$
\begin{align}\notag
M_{2\mu}&=\frac1n\sum_{j=1}^n\E\frac{{\varepsilon}_{j\nu}\varepsilon_{j\mu}R_{jj}\varphi(\Lambda_n)}{(z+m^{(j)}(z))(z+s(z)+m_n(z))}.
\end{align}
Using the inequality $|ab|\le \frac12(a^2+b^2)$, we obtain, for $\nu=2,3$, and $\mu=1,2,3$
\begin{align}\notag
|M_{2\mu}|&\le \frac1n\sum_{j=1}^n\E\frac{(|\varepsilon_{j\nu}|^2+|\varepsilon_{j\mu}|^2)|R_{jj}||\varphi(\Lambda_n)|}{(z+m^{(j)}(z))(z+s(z)+m_n(z))}.
\end{align}

Similar to inequalities  \eqref{lada01}, \eqref{lada1}, we get
\begin{align}\label{lada3}
 |M_{2}|\le \frac{Cp}{nv}J^{\frac{p-1}p}+\frac{C^pp^{p-1}}{(nv)^p}.
\end{align}

\subsection{Estimation of $M_3$}Recall that
\begin{align}\notag
 M_3&=\frac1n\sum_{j=1}^n\E\frac{\varepsilon_{j\nu}\frac1{z+m_n^{(j)}(z)}(\varphi(\Lambda_n)-\varphi(\widetilde{\Lambda}_n^{(j)}))}{z+m_n(z)+s(z)}.
\end{align}
Let 
\begin{align}\label{delta}
     \delta_j=\Lambda_n-\widetilde{\Lambda}_n^{(j)}=\frac1n(R_{jj}-s(z))(1+\eta_{j0})+\frac1n\eta_jR_{jj}.
    \end{align}

Applying Taylor's formula, we represent it in the form
\begin{equation}\label{m3}
M_3=M_{31}+M_{32},
\end{equation}
where
\begin{align}
M_{31}&=\frac{1}{n^2}\sum_{j=1}^n\E\frac{\varepsilon_{j\nu}(R_{jj}-s(z))(1+\eta_{j0})\varphi'(\widetilde\Lambda_n^{(j)}+\tau\delta_j)}{(z+m_n^{(j)}(z))(z+m_n(z)+s(z))},\notag\\
M_{32}&=\frac{1}{n^2}\sum_{j=1}^n\E\frac{\varepsilon_{j\nu}\eta_jR_{jj}\varphi'(\widetilde\Lambda_n^{(j)}+\tau\delta_j)}{(z+m_n(z)+s(z))(z+m_n^{(j)}(z))}.
\end{align}

\subsubsection{Estimation of $M_{31}$}
First we note that
\begin{equation}\label{mn'}
R_{jj}-s(z)=\frac{\Lambda_n^{(j)}s(z)}{z+m_n^{(j)}(z)}+\sum_{\mu=1}^3\frac{\varepsilon_{j\mu}}{z+m_n^{(j)}(z)}R_{jj}.
\end{equation}

We represent now $M_{31}$ in the form
\begin{align}\label{m31p}
 M_{31}=G_1+G_2+G_{3}+G_{4},
\end{align}
where
\begin{align}
G_1&=\frac{1}{n^2}\sum_{j=1}^n\E\frac{s(z)\varepsilon_{j\nu}(1+\eta_{j0})\Lambda_n^{(j)}\varphi'(\widetilde{\Lambda}_n^{(j)}+\tau\delta_j)}
{(z+m_n^{(j)}(z))^2(z+m_n(z)+s(z))},\notag\\
G_{\mu+1}&=\frac{1}{n^2}\sum_{j=1}^n\E\frac{\varepsilon_{j\nu}\varepsilon_{j\mu}(1+\eta_{j0})R_{jj}\varphi'(\widetilde{\Lambda}_n^{(j)}+\tau\delta_j)}
{(z+m_n^{(j)}(z))^2(z+m_n(z)+s(z))},\text{ for }\mu=1,2,3.\notag
\end{align}
We continue with $G_1$, applying Lemma \ref{exp*}.
We get
\begin{align}
 |G_1|\le \frac{Cp}{n^2}\sum_{j=1}^n\E\frac{|\varepsilon_{j\nu}||1+\eta_{j0}||\Lambda_n^{(j)}||\widetilde{\Lambda}_n^{(j)}+\tau\delta_j|^{p-2}}
{|z+m_n^{(j)}(z)|^2|z+m_n(z)+s(z)|}.\notag
\end{align}
Furthermore, we use inequality \eqref{lar1}  and Lemma \ref{exp0} in the Appendix. We get, for $z\in\mathbb G$,
\begin{align}
 |G_1|&\le \frac{Cp}{n^2}\sum_{j=1}^n\E\frac{|\varepsilon_{j\nu}||1+\eta_{j0}||\Lambda_n^{(j)}||\widetilde{\Lambda}_n^{(j)}|^{p-2}}
{|z+m_n^{(j)}(z)|^2|z+m_n^{(j)}(z)+s(z)|}\notag\\&\qquad\q+\frac{(Cp)^{p-2}}{n^2}\sum_{j=1}^n\E\frac{|\varepsilon_{j\nu}(1+\eta_{j0})||\Lambda_n^{(j)}||\delta_j|^{p-2}}
{|z+m_n^{(j)}(z)|^2|z+m_n^{(j)}(z)+s(z)|}.\notag
\end{align}
Note that
\begin{equation}\label{new11}
 |{\widetilde\Lambda}_n^{(j)}|^p\le {\rm e}|\Lambda_n^{(j)}|^p+\frac{C^pp^p}{n^p}(1+|\eta_{j0}|)^p,
\end{equation}
This inequality and Lemma \ref{exp*} together imply
\begin{align}\label{g1}
 |G_1|&\le\frac{Cp}{n^2}\sum_{j=1}^n\E\frac{|\varepsilon_{j\nu}(1+\eta_{j0})||\Lambda_n^{(j)}|^{p-1}}{|z+m_n^{(j)}(z)|^2|z+m_n^{(j)}(z)+s(z)|}\notag\\&\qquad\q+\frac{Cp^{p-2}}{n^p}\sum_{j=1}^n
 \E\frac{|\varepsilon_{j\nu}||\Lambda_n^{(j)}|(1+|\eta_{j0}|)^{p-1}}{|z+m_n^{(j)}(z)|^2|z+m_n^{(j)}(z)+s(z)|}
 \notag\\&\qquad\q+\frac{(Cp)^{p-2}}{n^2}\sum_{j=1}^n\E\frac{|\varepsilon_{j\nu}(1+\eta_{j0})||\Lambda_n^{(j)}| |\delta_j|^{p-2}}
{|z+m_n^{(j)}(z)|^2|z+m_n^{(j)}(z)+s(z)|}.
\end{align}
Conditioning on $\mathfrak M^{(j)}$ and 
using Lemmas \ref{basic2}, \ref{basic5} and Lemma \ref{lam1}  in the Appendix, and inequality
\begin{equation}\label{222}
 |1+\eta_{j0}|\le v^{-1}\im(z+\im m_n^{(j)}(z))\le v^{-1}|z+m_n^{(j)}(z)+s(z)|,
\end{equation}
we obtain, for $z\in\mathbb G$ and for $\nu=2,3$ ,
\begin{align}
 |G_1|&\le\frac{Cp}{nv\sqrt{nv}}\frac1n\sum_{j=1}^n\E^{\frac{p-1}p}|\Lambda_n^{(j)}|^{p-1}+\frac{(Cp)^{p-2}}{(nv)^p}
 \notag\\&\qquad\q+\frac{(Cp)^{p-2}}{n^2v}\sum_{j=1}^n\E\frac{|\varepsilon_{j\nu}\Lambda_n^{(j)}| |\delta_j|^{p-2}}
{|z+m_n^{(j)}(z)|^2}.\notag
\end{align}

Without loss of generality we may assume that $p\ge 3$.
Applying Lemmas \ref{exp*} and \ref{exp0}, we get
\begin{align}\label{g10}
 |G_1|&\le\frac{Cp}{(nv)^{\frac32}}\E^{\frac{p-1}p}|\Lambda_n|^{p}+\frac{(Cp)^{p-2}}{(nv)^p}
 \notag\\&\qquad\q +\frac{(Cp)^{p-2}}{n^2v}\sum_{j=1}^n\E\frac{|\varepsilon_{j\nu}|\Lambda_n^{(j)}||\delta_j|^{p-2}}
{|z+m_n^{(j)}(z)|^2}.
\end{align}

According to definition \eqref{delta} 
\begin{equation}\label{delta1}
 |\delta_j|\le \frac1n|R_{jj}-s(z)|(1+|\eta_{j0}|)+\frac {|\eta_j||R_{jj}|}n.
\end{equation}
Notice that
\begin{equation}
 |R_{jj}(z)-s(z)|\le \frac{|R_{jj}|}{|z+m^{(j)}_n(z)|}(|\widehat\varepsilon_j|+|\Lambda_n^{(j)}|),
\end{equation}
where $\widehat\varepsilon_j=\varepsilon_{j1}+\varepsilon_{j2}+\varepsilon_{j3}$ and $\Lambda_n^{(j)}=\Lambda_n+\varepsilon_{j4}$.
Inequality \eqref{delta1} and equality \eqref{mn'} together imply
\begin{align}\label{delta2}
 |\delta_j|&\le \frac{|\Lambda_n^{(j)}||R_{jj}|}{nv}+\sum_{\mu=1}^3\frac{C|\varepsilon_{j\mu}|}{nv}|R_{jj}|+\frac {C|\eta_j||R_{jj}|}{n}.
\end{align}

Moreover,
\begin{equation}\label{636}
 |\Lambda_n^{(j)}|^q\le \text{\rm e}|\Lambda_n|^q+\frac{C^qq^q}{(nv)^q}.
\end{equation}
Inequalities \eqref{g10}, \eqref{mn'}, \eqref{new11} and Lemma \ref{exp*} 
yield, for $z\in\mathbb G$,
\begin{align}\label{6.37}
 |G_1|&\le \frac{Cp}{nv}\E^{\frac{p-1}p}|\Lambda_n|^{p}+\frac{(Cp)^{p-2}}{(nv)^p}\notag\\&\qquad \q+\frac{(Cp)^{p-2}}{n^2v}\sum_{j=1}^n\E\frac{|\varepsilon_{j\nu}||\Lambda_n^{(j)}|^{p-1}|R_{jj}|^{p-2}}
{|z+m_n^{(j)}(z)|^2(nv)^{p-2}}\notag\\&\qquad \q+\sum_{\mu=1}^3\frac C{n^2v}\sum_{j=1}^n\E\frac{(Cp)^{p-2}|\varepsilon_{j\nu}||\Lambda_n^{(j)}|
|\varepsilon_{j\mu}|^{p-2}}{(nv)^{p-2}|z+m_n^{(j)}(z)|^2}|R_{jj}|^{p-2}\notag\\&+
\frac {C^p}{n^2v}\sum_{j=1}^n\E\frac{|\varepsilon_{j\nu}||\Lambda_n^{(j)}||\eta_j|^{p-2}}{n^{p-2}|z+m_n^{(j)}(z)|^2}|R_{jj}|^{p-2}.
\end{align}
Applying H\"older's inequality, we get
\begin{align}\label{kkk}
 \E&\frac{|\varepsilon_{j\nu}||\Lambda_n^{(j)}|^{p-1}|R_{jj}|^{p-2}}
{|z+m_n^{(j)}(z)|^2}\notag\\&\le 
\E^{\frac14}\frac1{|z+m_n^{(j)}(z)|^{8}}
\E^{\frac14}|R_{jj}|^{4(p-2}
\E^{\frac14}|\Lambda_n^{(j)}|^{4(p-1)}
\E^{\frac1{4}}|\varepsilon_{j\nu}|^{4}.
\end{align}
Using that $|\Lambda_n^{(j)}|\le |\Lambda_n|+\frac1{nv}$, we arrive
\begin{align}
 \E&\frac{|\varepsilon_{j\nu}||\Lambda_n^{(j)}|^{p-1}|R_{jj}|^{p-2}}
{|z+m_n^{(j)}(z)|^2}\notag\\&\le 
\E^{\frac14}\frac1{|z+m_n^{(j)}(z)|^{8}}
\E^{\frac14}|R_{jj}|^{4(p-2}
(\E^{\frac14}|\Lambda_n|^{4(p-1)}+\frac{C^pp^{p-1}}{(nv)^{p-1}})
\E^{\frac1{4}}|\varepsilon_{j\nu}|^{4}
\end{align}
Note that
\begin{equation}\label{6.40}
 \E|\Lambda_n|^{4(p-1)}\le 
 \E|\frac1n\sum_{j=1}^n\varepsilon_jR_{jj}|^{2(p-1)}
 \le \frac1n\sum_{j=1}^n\E^{\frac12}|\varepsilon_j|^{4(p-1)}\E^{\frac12}|R_{jj}|^{4(p-1)}.
\end{equation}
Using condition \eqref{trun}, Lemmas \ref{bp1}, \ref{bp1*}, \ref{basic8}, Rosethal's  and Burkholder's inequalities and Corollary \ref{cor8} , we conclude that for $q\ge 2$
\begin{equation}\label{oo1}
 \E^{\frac14}{|\varepsilon_{j\mu}|^{4q}}\le \frac {(Cq)^q}{(nv)^{\frac q2}}
\end{equation}
and
\begin{equation}\label{oo2}
 \E^{\frac14}|\eta_j|^{4q}\le (Cq)^qn^{-\frac q2}v^{-\frac{3q}2}.
\end{equation}
The  inequalities \eqref{6.40},  \eqref{oo1} and Corollaries \ref{cor8}  together imply
\begin{align}
 \E\frac{|\varepsilon_{j\nu}||\Lambda_n^{(j)}|^{p-1}|R_{jj}|^{p-2}}
{|z+m_n^{(j)}(z)|^2}&\le \frac{ C^pp^{2p}}{(nv)^{\frac{p+1}4}}.
\end{align}
This implies that
\begin{align}
 \frac{(Cp)^{p-2}}{n^2v}\sum_{j=1}^n\E\frac{|\varepsilon_{j\nu}||\Lambda_n^{(j)}|^{p-1}|R_{jj}|^{p-2}}
{|z+m_n^{(j)}(z)|^2(nv)^{p-2}}\le \frac{ C^pp^{2p}}{(nv)^{\frac{5p-3}4}},
\end{align}
and, for $p\ge3$
\begin{align}
 \frac{(Cp)^{p-2}}{n^2v}\sum_{j=1}^n\E\frac{|\varepsilon_{j\nu}||\Lambda_n^{(j)}|^{p-1}|R_{jj}|^{p-2}}
{|z+m_n^{(j)}(z)|^2(nv)^{p-2}}\le \frac{ C^pp^{p}}{(nv)^p},
\end{align}
It is straightforward to check that this inequality holds for $p=2$ as well.
Similarly to \eqref{kkk}, applying H\"older's inequality and inequalities \eqref{oo1} and \eqref{oo2}, we get
\begin{align}\label{kk1}
\sum_{\mu=1}^3\frac C{n^2v}\sum_{j=1}^n\E\frac{(Cp)^{p-2}|\varepsilon_{j\nu}||\Lambda_n^{(j)}|
|\varepsilon_{j\mu}|^{p-2}}{(nv)^{p-2}|z+m_n^{(j)}(z)|^2}|R_{jj}|^{p-2} \le\frac{ C^pp^{p}}{(nv)^{p}}.
\end{align}
and
\begin{align}
\frac C{n^2v}\sum_{j=1}^n\E\frac{|\varepsilon_{j\nu}||\Lambda_n^{(j)}||\eta_j|^{p-2}}{n^{p-2}|z+m_n^{(j)}(z)|^2}|R_{jj}|^{p-2}\le\frac{ C^pp^{p}}{(nv)^{p}}
\end{align}
These inequalities and Corollary \ref{cor8} together imply
\begin{align}\label{g1*}
 |G_1|&\le \frac{Cp}{nv}J_p^{\frac{p-1}p}+\frac{(Cp)^{p}}{(nv)^p}.
\end{align}

To bound $G_{1+\mu}$, for $\mu=1,2,3$,   we use Lemma \ref{exp0} and inequality \eqref{222}. We get
\begin{align}
 |G_{1+\mu}|\le \frac{Cp}{n^2v}\sum_{j=1}^n\E\frac{|\varepsilon_{j\nu}||\varepsilon_{j\mu}||R_{jj}||\widetilde{\Lambda}_n^{(j)}+\tau\delta_j|^{p-2}}
{|z+m_n^{(j)}(z)|^2}.\notag
\end{align}
Furthermore, we use inequality \eqref{lar1}  and Lemma \ref{exp0} in the Appendix. We get, for $z\in\mathbb G$,
\begin{align}
 |G_{1+\mu}|&\le \frac{Cp}{n^2v}\sum_{j=1}^n\E\frac{|\varepsilon_{j\nu}||\varepsilon_{j\mu}|R_{jj}|||\widetilde{\Lambda}_n^{(j)}|^{p-2}}
{|z+m_n^{(j)}(z)|^2}\notag\\&\qquad\q+\frac{(Cp)^{p-2}}{n^2v}\sum_{j=1}^n\E\frac{|\varepsilon_{j\nu}||\varepsilon_{j\mu}||R_{jj}||\delta_j|^{p-2}}
{|z+m_n^{(j)}(z)|^2}.\notag
\end{align}

This inequality and Lemma \ref{exp*} together imply
\begin{align}\label{g1**}
 |G_{1+\mu}|&\le\frac{Cp}{n^2v}\sum_{j=1}^n\E\frac{|\varepsilon_{j\nu}||\varepsilon_{j\mu}||R_{jj}||\Lambda_n^{(j)}|^{p-2}}{|z+m_n^{(j)}(z)|^2}\notag\\&
 \qquad\q+\frac{Cp^{p-2}}{n^pv}\sum_{j=1}^n
 \E\frac{|\varepsilon_{j\nu}||\varepsilon_{j\mu}||R_{jj}|}{|z+m_n^{(j)}(z)|^2}
 \notag\\&\qquad\q+\frac{(Cp)^{p-2}}{n^2v}\sum_{j=1}^n\E\frac{|\varepsilon_{j\nu}||\varepsilon_{j\mu}||R_{jj}||\delta_j|^{p-2}}
{|z+m_n^{(j)}(z)|^2}.
\end{align}

To estimate the first sum on the right hand side of \eqref{g1**}, we  used  conditioning on $\mathfrak M^{(j)}$, then Corollaries \ref{corgot} and \ref{cor8} and 
Lemma \ref{exp*}. The estimations of the second and third sums on the right hand side of \eqref{g1**} are similar to \eqref{kk1}.
Similarly to \eqref{g1*} we get from the  last inequality , for $\mu=1,2,3$,
\begin{align}\label{g2}
 |G_{1+\mu}|&\le \frac{Cp}{nv}J_p^{\frac{p-1}p}+\frac{(Cp)^{p-2}}{(nv)^p}.
\end{align}
Combining inequalities \eqref{g1*} and \eqref{g2}, we get
\begin{align}\label{m31}
 |M_{31}|\le \frac{Cp}{nv}J_p^{\frac{p-1}p}+\frac{(Cp)^{p-2}}{(nv)^p}.
\end{align}

\subsubsection{Estimation of $M_{32}$}Recall that
\begin{align}\notag
 M_{32}&=\frac{1}{n^2}\sum_{j=1}^n\E\frac{\varepsilon_{j\nu}\eta_jR_{jj}\varphi'(\widetilde{\Lambda}_n^{(j)}+\tau\delta_j)}{(z+m_n(z)+s(z))(z+m_n^{(j)}(z))}.
\end{align}
Using inequality \eqref{lar1} and the definition of $\varphi$, we get
\begin{align}\notag
| M_{32}|\le \frac{p}{n^2}\sum_{j=1}^n\E\frac{|\varepsilon_{j\nu}\eta_j||R_{jj}||\widetilde{\Lambda}_n^{(j)}+\tau\delta_j|^{p-2}}{|z+m_n^{(j)}(z)+s(z)||z+m_n^{(j)}(z)|}.
\end{align}

Applying now Lemma \ref{exp*}, we obtain
\begin{align}
 |M_{32}|&\le \frac{p}{n^2}\sum_{j=1}^n\E\frac{|\varepsilon_{j\nu}||\eta_j||R_{jj}||\widetilde{\Lambda}_n^{(j)}|^{p-2}}{|z+m_n^{(j)}(z)+s(z)||z+m_n^{(j)}(z)|}
 \notag\\&+
 \frac{(Cp)^{p-1}}{n^2}\sum_{j=1}^n\E\frac{|\varepsilon_{j\nu}||\eta_j||R_{jj}||\delta_j|^{p-2}}{|z+m_n^{(j)}(z)+s(z)||z+m_n^{(j)}(z)|}.\notag
\end{align}
By Lemmas \ref{basic1}, \ref{basic2}, \ref{basic5}, we have
\begin{align}
 \E\{|\varepsilon_{j\nu}||\eta_j|\Big|\mathfrak M^{(j)}\}&\le \frac{C\im m_n^{(j)}}{(nv^2)}.\notag
\end{align}
We get, for $z\in\mathbb G$,
\begin{align}\label{m32}
 |M_{32}|&\le\frac{Cp}{(nv)^2}\frac1n\sum_{j=1}^n\E^{\frac{p-2}p}|\widetilde{\Lambda}_n^{(j)}|^{p}\notag\\&\qquad\q+
 \frac{(Cp)^{p-1}}{n^2}\sum_{j=1}^n\E\frac{|\varepsilon_{j\nu}||\eta_j||\delta_j|^{p-2}}{|z+m_n^{(j)}(z)+s(z)||z+m_n^{(j)}(z)|}.
\end{align}
Inequalities \eqref{m32} and \eqref{delta2} yield
\begin{align}\label{rr1}
 |M_{32}|&\le\frac{Cp}{(nv)^2}\frac1n\sum_{j=1}^n\E^{\frac{p-2}p}|\widetilde{\Lambda}_n^{(j)}|^{p}\notag\\&\qquad\q+
 \frac{(Cp)^{p-1}}{n^p}\sum_{j=1}^n\E\frac{|\varepsilon_{j\nu}||\eta_j|| |\Lambda_n|^{p-2}|R_{jj}|^{p-2}}{|z+m_n^{(j)}(z)+s(z)||z+m_n^{(j)}(z)|}\notag\\&
 \qquad\q+
\sum_{q=1}^4 \frac{(Cp)^{p-1}}{n^p}\sum_{j=1}^n\E\frac{|\varepsilon_{j\nu}||\varepsilon_{j\mu}||\varepsilon_{jq}|^{p-2}(1+|\eta_j|)^{p-2}|R_{jj}|^{p-2}}
{|z+m_n^{(j)}(z)+s(z)||z+m_n^{(j)}(z)||z+m_n(z)|^{p-2}}.
\end{align}

Applying Lemma \ref{exp*} to estimate the first sum on the right hand side of \eqref{rr1}, Corollary \ref{corgot} to estimate the second one and 
inequalities \eqref{oo1} and \eqref{oo2}, we get
\begin{equation}\label{mm100}
 |M_{32}|\le\frac{Cp}{(nv)}\E^{\frac{p-1}p}|\Lambda_n|^p+
 \frac{(Cp)^{p-1}}{(nv)^{p}} .
\end{equation}
The representation \eqref{m3} and inequalities \eqref{m31} and \eqref{mm100} together imply
\begin{equation}\notag
 |M_{3}|\le\frac{Cp}{(nv)}J_p^{\frac{p-1}p}+
 \frac{(Cp)^{p-1}}{(nv)^{p}}.
\end{equation}

\subsection{Estimation of $M_4$} Recall that
 \begin{equation}\notag
  M_4=-\frac1n\sum_{j=1}^n\E\frac{\varepsilon_{j\nu}\frac1{z+m_n^{(j)}(z)}\varphi(\widetilde{\Lambda}_n^{(j)})\varepsilon_{j4}}{(z+m_n^{(j)}(z)+s(z))(z+m_n(z)+s(z))}.
 \end{equation}
Consider the following moments
\begin{align}\notag
 \gamma_n:=\E\{\varepsilon_{j\nu}{\varepsilon_{j4}}\Big|\mathfrak M^{(j)}\}.
\end{align}
By Cauchy -- Schwartz inequality we have
\begin{equation}\notag
 |\gamma_n|\le \E^{\frac12}\{|\varepsilon_{j\nu}|^2\Big|\mathfrak M^{(j)}\}\E^{\frac12}\{|\varepsilon_{j4}|^2\Big|\mathfrak M^{(j)}\}.
\end{equation}
By Lemmas \ref{basic2} and \ref{basic5}, we obtain
\begin{equation}\label{jnu}
\E^{\frac12}\{|\varepsilon_{j\nu}|^2\Big|\mathfrak M^{(j)}\}\le C(nv)^{-\frac12}\im^{\frac12}m_n^{(j)}(z).
\end{equation}

Furthermore,
\begin{equation}\notag
\E\{|\varepsilon_{j4}|^2\Big|\mathfrak M^{(j)}\}\le \frac C{n^2}\E\{(1+|\eta_{j0}|^2+|\eta_j|^2)|R_{jj}|^2\Big|\mathfrak M^{(j)}\}.
\end{equation}

It is straightforward to check that
\begin{align}\label{j4}
 \E\{|\varepsilon_{j4}|^2\Big|\Big|\mathfrak M^{(j)}\}\le (\frac{1+v^{-2}\im^2m_n^{(j)}(z)}{n^2}+\frac C{n^3v^3}\im m_n^{(j)}(z))\E^{\frac12}\{|R_{jj}|^4\Big|\mathfrak M^{(j)}\}.
\end{align}
The inequalities \eqref{jnu} and \eqref{j4} together imply
\begin{equation}\label{gamma}
 |\gamma_n|\le (\frac {C\im^{\frac12}m_n^{(j)}(z)}{n\sqrt{nv}}+\frac {C\im m_n^{(j)}(z)}{(nv)^2}+\frac {C\im ^{\frac32}m_n^{(j)}(z)}{(nv)^{\frac32}})\E^{\frac12}\{|R_{jj}|^4|\mathfrak M^{(j)}\}.
\end{equation}

Using inequality \eqref{lar1} and conditioning on $\mathfrak M^{(j)}$, we may write, for $z\in\mathbb G$
\begin{align}
 |M_4|&\le \frac Cn\sum_{j=1}^n\E|\gamma_n|\frac{\frac1{|z+m_n^{(j)}(z)|}|\varphi(\widetilde{\Lambda}_n^{(j)})|}
 {|z+m_n^{(j)}(z)+s(z)|^2}.\notag
\end{align}
Applying now inequality \eqref{gamma}, we get
\begin{align}
 |M_4|&\le \frac1{n\sqrt{nv}|z^2-4|^{\frac34}}\frac Cn\sum_{j=1}^n\E\frac1{|z+m_n^{(j)}(z)|}|\varphi(\widetilde{\Lambda}_n^{(j)})|\E^{\frac12}\{|R_{jj}|^4\Big|\mathfrak M^{(j)}\}\notag\\&\qquad\q+
 \frac1{(nv)^2|z^2-4|^{\frac12}}\frac Cn\sum_{j=1}^n\E\frac1{|z+m_n^{(j)}(z)|}|\varphi(\widetilde{\Lambda}_n^{(j)})|\E^{\frac12}\{|R_{jj}|^4\Big|\mathfrak M^{(j)}\}\notag\\&
 +\frac1{(nv)^{\frac32}|z^2-4|^{\frac14}}\frac Cn\sum_{j=1}^n\E\frac1{|z+m_n^{(j)}(z)|}|\varphi(\widetilde{\Lambda}_n^{(j)})|\E^{\frac12}\{|R_{jj}|^4\Big|\mathfrak M^{(j)}\}.
 \end{align}
Applying H\"older's inequality and Corollary \ref{cor8}, we obtain
\begin{align}\label{m4}
 |M_4|&\le \frac {Cp}{nv}J_p^{\frac{p-1}p}+\frac {(Cp)^{p-1}}{(nv)^p}.
\end{align}
Combining now inequalities \eqref{lada3}, \eqref{m1}, \eqref{lada3}, \eqref{m3}, \eqref{m4}, we get
\begin{align}\notag
 |\mathfrak T_2|\le \frac {Cp}{nv}J_p^{\frac{p-1}p}+\frac {(Cp)^{p-1}}{(nv)^p}.
\end{align}
Together with \eqref{fini1} we get 
\begin{equation}
 |J_p|\le \frac {Cp}{nv}J_p^{\frac{p-1}p}+\frac {(Cp)^{p-1}}{(nv)^p}+\frac {Cp}{nv}J_p.
\end{equation}
Since $\frac {Cp}{nv}<c<1$, we conclude
\begin{equation}\notag
 |J_p|\le \frac {Cp}{nv}J_p^{\frac{p-1}p}+\frac {(Cp)^{p-1}}{(nv)^p}.
\end{equation}
Using Lemma  \ref{frak} in the Appendix, we get, for $z\in\mathbb G$,
\begin{equation}\notag
 |J_p|\le\frac {(Cp)^{p-1}}{(nv)^p}.
\end{equation}
Thus Theorem \ref{stiltjesmain} is proved.

\section{Appendix}
\subsection{Rosenthal's and Burkholder's Inequalities}
In this subsection we state the Rosenthal and Burkholder inequalities starting with Rosenthal's inequality.
Let $\xi_1,\ldots,\xi_n$ be independent random variables with $\E\xi_j=0$, $\E\xi_j^2=1$ and for $p\ge 1$ $\E|\xi_j|^p\le \mu_p$ for $j=1,\ldots,n$.
\begin{lem}\label{Rosent}{\rm (Rosenthal's inequality)}

 There exists an absolute constant $C_1$ such that
 \begin{equation}\notag
 \E|\sum_{j=1}^na_j\xi_j|^p\le C_1^pp^p\Big(\big(\sum_{j=1}^p|a_j|^2\big)^{\frac p2}+\mu_p\sum_{j=1}^p|a_j|^p\Big)
 \end{equation}
\end{lem}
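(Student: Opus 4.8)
The plan is to prove the inequality first for an even integer $p=2m$, where the moment expands explicitly, and then to deduce the general case. Write $S=\sum_{j=1}^{n}a_{j}\xi_{j}$ and set $T_{2}:=\sum_{j}|a_{j}|^{2}$, $T_{p}:=\mu_{p}\sum_{j}|a_{j}|^{p}$; note that $\E\xi_{j}^{2}=1$ makes $q\mapsto(\E|\xi_{j}|^{q})^{1/q}$ nondecreasing, in particular $\mu_{q}\ge1$ for $q\ge2$. For $p=2m$ (and, say, real $\xi_{j}$; the complex case is the same upon writing $|S|^{2m}=(S\bar S)^{m}$) I would expand
\[
\E|S|^{p}=\sum_{j_{1},\dots,j_{p}} a_{j_{1}}\cdots a_{j_{p}}\;\E\bigl[\xi_{j_{1}}\cdots\xi_{j_{p}}\bigr]
\]
and group the summands according to the partition $\pi$ of $\{1,\dots,p\}$ that records which positions carry equal indices. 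Because $\E\xi_{j}=0$ and the $\xi_{j}$ are independent, a summand survives only if every block of $\pi$ has size $\ge2$; if the block sizes are $m_{1},\dots,m_{r}$ (so $\sum_{i}m_{i}=p$ and $r\le m$), then, after dropping the distinctness constraint on block indices, the $\pi$-part of the sum is at most $\prod_{i=1}^{r}\sum_{l}|a_{l}|^{m_{i}}\E|\xi_{l}|^{m_{i}}$.

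The heart of the argument is the estimate of one partition type. For $m_{i}\in[2,p]$, the bound $\E|\xi_{l}|^{m_{i}}\le(\E|\xi_{l}|^{p})^{(m_{i}-2)/(p-2)}\le\mu_{p}^{(m_{i}-2)/(p-2)}$ (interpolation using $\E\xi_{l}^{2}=1$) together with the log-convexity of $q\mapsto\sum_{l}|a_{l}|^{q}$ gives
\[
\sum_{l}|a_{l}|^{m_{i}}\E|\xi_{l}|^{m_{i}}\le\mu_{p}^{\frac{m_{i}-2}{p-2}}\Bigl(\sum_{l}|a_{l}|^{2}\Bigr)^{\!\frac{p-m_{i}}{p-2}}\Bigl(\sum_{l}|a_{l}|^{p}\Bigr)^{\!\frac{m_{i}-2}{p-2}}=T_{2}^{\frac{p-m_{i}}{p-2}}\,T_{p}^{\frac{m_{i}-2}{p-2}}.
\]
Taking the product over $i$ and using $\sum_{i}(p-m_{i})=p(r-1)$ and $\sum_{i}(m_{i}-2)=p-2r$, the $\pi$-contribution is at most $T_{2}^{\,a}T_{p}^{\,b}$ with $a=\tfrac{p(r-1)}{p-2}$ and $b=\tfrac{p-2r}{p-2}\ge0$; since $r\le m=p/2$ one has $\tfrac{2a}{p}\in[0,1]$ and $\tfrac{2a}{p}+b=1$, so weighted AM--GM (Young's inequality) yields $T_{2}^{\,a}T_{p}^{\,b}=\bigl(T_{2}^{p/2}\bigr)^{2a/p}T_{p}^{\,b}\le T_{2}^{p/2}+T_{p}$. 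There are at most $p^{p}$ partitions of a $p$-element set (each position lies in one of at most $p$ blocks), so summing over partition types gives $\E|S|^{p}\le p^{p}\bigl(T_{2}^{p/2}+T_{p}\bigr)$, which is the claimed inequality for even $p$.

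For a general exponent $p\ge2$ let $p'$ be the least even integer with $p'\ge p$; then $p'\le p+2\le2p$, and by Jensen's inequality $\E|S|^{p}\le(\E|S|^{p'})^{p/p'}$. Inserting the even-$p'$ bound, using $(p')^{p}\le(2p)^{p}$ and the monotonicity of $(\E|\xi_{j}|^{q})^{1/q}$, recovers the stated inequality with a larger absolute constant. (This is harmless here, since in every application of the lemma the entries have all moments finite; under only a $p$-th moment hypothesis one instead truncates $a_{j}\xi_{j}$ at a level at which the truncated centred part is handled by the even-moment bound and the tail by a crude $\ell^{1}$-estimate, as in Rosenthal's original argument.) The step I expect to require the most care is the middle one: the bookkeeping that forces $\mu_{p}$ to attach only to $T_{p}$ and never to inflate the ``Gaussian'' term $T_{2}^{p/2}$; the remaining estimates are routine.
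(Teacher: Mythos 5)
The paper offers no proof of this lemma (it is quoted from Rosenthal \cite{Rosenthal:1970} and Johnson--Schechtman--Zinn \cite{Johnson:1985}), so your argument must stand on its own. Its core, the even case $p=2m$, is correct and complete: the multinomial expansion, the restriction to partitions whose blocks all have size at least two, the per-partition bound $T_2^{\,p(r-1)/(p-2)}T_p^{\,(p-2r)/(p-2)}\le T_2^{p/2}+T_p$ obtained from log-convexity of $q\mapsto\E|\xi_l|^q$ and of $q\mapsto\sum_l|a_l|^q$ together with Young's inequality, and the count of at most $p^p$ admissible partitions all check out, and they give the stated inequality with an absolute constant whenever $p$ is an even integer.

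The gap is the passage to general $p$, which is precisely the case the paper needs: the lemma is invoked with real exponents of order $\log n$ and $(nv)^{\frac14}$, not even integers. Jensen's inequality up to the next even integer $p'$ leaves you with the term $\mu_{p'}^{\,p/p'}\sum_j|a_j|^p$, and your appeal to ``monotonicity of $(\E|\xi_j|^q)^{1/q}$'' cannot convert $\mu_{p'}$ into $\mu_p$: Lyapunov's inequality runs the other way, $\mu_p^{1/p}\le\mu_{p'}^{1/p'}$, and under the hypothesis as stated (a bound on the $p$-th moment only) $\mu_{p'}$ may even be infinite, so this reduction proves nothing. Nor is the defect harmless in the present setting: with the truncation $|X_{jk}|\le Dn^{\frac14}$ one only has $\mu_{p'}\le (Dn^{\frac14})^{p'-p}\mu_p$, so your route delivers the inequality with an extra factor that can be as large as $n^{\frac12}$ on the second term instead of an absolute constant --- not the statement being proved, and not obviously absorbable where the lemma is applied with the $\mu_p\sum_j|a_j|^p$ term tracked precisely (Lemmas \ref{eps2}, \ref{eps3}, \ref{basic5}, and the bounds \eqref{oo1}, \eqref{oo2}). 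The truncation argument you mention only in parentheses is exactly the missing work for fractional $p$, and it has to be carried out so as to preserve a constant of the form $C^pp^p$; as written, the proposal establishes Rosenthal's inequality only for even integer exponents.
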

\begin{proof}
 For the proof of this inequality see \cite{Rosenthal:1970} and \cite{Johnson:1985}.
\end{proof}
Let $\xi_1,\ldots\xi_n$ be martingale-difference with respect to $\sigma$-algebras $\mathfrak M_j=\sigma(\xi_1,\ldots,\xi_{j-1})$.
Assume that  $\E\xi_j^2=1$ and $\E|\xi_j|^p<\infty$.
\begin{lem}\label{burkh}{\rm (Burkholder's inequality)}
 There exist an absolute constant $C_2$ such that
 \begin{equation}\notag
  \E|\sum_{j=1}^n\xi_j|^p\le C_2^pp^p\Big(\Big(\E(\sum_{k=1}^n\E\{\xi_k^2|\mathfrak M_{k-1}\}\Big)^{\frac p2}+\sum_{k=1}^p\E|\xi_k|^p\Big).
 \end{equation}

\end{lem}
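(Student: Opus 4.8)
\medskip\noindent\emph{Proof sketch.} The plan is to reduce the inequality to two classical ingredients and combine them by a Young-type absorption. Write $S_n=\sum_{j=1}^n\xi_j$, and introduce the square function $[M]_n:=\sum_{j=1}^n\xi_j^2$, the predictable quadratic variation $\langle M\rangle_n:=\sum_{j=1}^n\E\{\xi_j^2|\mathfrak M_{j-1}\}$, and $M_n^*:=\max_{1\le j\le n}|\xi_j|$. The first ingredient is the square-function form of the Burkholder--Davis--Gundy inequality with the sharp order of the constant: for any martingale--difference sequence and $p\ge2$,
\begin{equation}\notag
 \E|S_n|^p\le (C_0\sqrt p)^p\,\E[M]_n^{\frac p2},
\end{equation}
which is classical (see, e.g., Burkholder's work on sharp martingale inequalities; the $\sqrt p$ growth of the constant is essential here). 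The second ingredient, which I would carry out in detail, is a comparison of $\E[M]_n^{p/2}$ with $\E\langle M\rangle_n^{p/2}$ and $\E(M_n^*)^p$.

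For that comparison I would write $[M]_n=\langle M\rangle_n+D_n$ with $D_n=\sum_{j=1}^n\zeta_j$, $\zeta_j:=\xi_j^2-\E\{\xi_j^2|\mathfrak M_{j-1}\}$, observing that $(\zeta_j)$ is again a martingale--difference sequence for the same filtration. For $p\ge4$ one applies the square-function BDG inequality to $D_n$ with exponent $p/2$, then uses $|\zeta_j|\le\xi_j^2+\E\{\xi_j^2|\mathfrak M_{j-1}\}$ together with the elementary bounds $\sum_j\xi_j^4\le(M_n^*)^2[M]_n$ and $\sum_j(\E\{\xi_j^2|\mathfrak M_{j-1}\})^2\le V_n\langle M\rangle_n$, where $V_n:=\max_j\E\{\xi_j^2|\mathfrak M_{j-1}\}$, and controls $\E V_n^{p/2}\le 2^{p/2}\E(M_n^*)^p$ by Doob's $L^{p/2}$--maximal inequality applied to the martingale $k\mapsto\E\{(M_n^*)^2|\mathfrak M_k\}$. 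After a Cauchy--Schwarz split this leads to
\begin{equation}\notag
 \E[M]_n^{\frac p2}\le 2^{\frac p2}\E\langle M\rangle_n^{\frac p2}+(C_1\sqrt p)^{\frac p2}\Big(\E^{\frac12}(M_n^*)^p\,\E^{\frac12}[M]_n^{\frac p2}+\E^{\frac12}(M_n^*)^p\,\E^{\frac12}\langle M\rangle_n^{\frac p2}\Big),
\end{equation}
and applying Young's inequality $ab\le\tfrac12a^2+\tfrac12b^2$ to absorb the term containing $\E[M]_n^{p/2}$ gives $\E[M]_n^{p/2}\le(C_2\sqrt p)^p\big(\E\langle M\rangle_n^{p/2}+\E(M_n^*)^p\big)$. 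For $2\le p<4$ the argument is shorter, since then $\big(\sum_j\zeta_j^2\big)^{p/4}\le\sum_j|\zeta_j|^{p/2}$ and $\E|\zeta_j|^{p/2}\le 2^{\frac p2+1}\E|\xi_j|^p$ by Jensen's inequality.

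Combining the two ingredients, $\E|S_n|^p\le(C_0\sqrt p)^p\E[M]_n^{p/2}\le(C_3p)^p\big(\E\langle M\rangle_n^{p/2}+\E(M_n^*)^p\big)$, and since $\E(M_n^*)^p\le\sum_{j=1}^n\E|\xi_j|^p$ this is exactly the assertion. The step I expect to be the main obstacle is the bookkeeping of the constant: each of the two invocations of the Burkholder--Davis--Gundy inequality must cost only a factor of the form $(C\sqrt p)^{(\cdot)}$ (a crude bound of order $p$ would already produce $p^{3p/2}$ instead of $p^p$), and the absorption step must be arranged so that the coefficient of $\E(M_n^*)^p$ stays at $(Cp)^p$. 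Alternatively, and in keeping with how Rosenthal's inequality is handled above, the inequality may simply be quoted in the stated form from the literature (Burkholder, \emph{Distribution function inequalities for martingales}, Ann.\ Probab.\ 1973; Hall and Heyde, \emph{Martingale Limit Theory and its Application}).
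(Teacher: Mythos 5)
Your proposal is correct in substance, but it takes a genuinely different route from the paper: the paper offers no proof at all for Lemma \ref{burkh}, it simply quotes the inequality from the literature (Burkholder 1973, Hitczenko 1990), exactly as your closing alternative suggests. Your sketch is the standard self-contained derivation of the martingale Rosenthal--Burkholder inequality: square-function BDG with the sharp $O(\sqrt p)$ constant, then the comparison of $\E[M]_n^{p/2}$ with $\E\langle M\rangle_n^{p/2}+\E(M_n^*)^p$ via the martingale-difference decomposition $[M]_n=\langle M\rangle_n+D_n$, a second BDG application at exponent $p/2$, the bounds $\sum_j\xi_j^4\le(M_n^*)^2[M]_n$ and $\sum_j(\E\{\xi_j^2|\mathfrak M_{j-1}\})^2\le V_n\langle M\rangle_n$ with Doob's maximal inequality for $V_n$, and a Young-type absorption; the small-$p$ case via subadditivity of $x\mapsto x^{p/4}$ and conditional Jensen is also fine. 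Your own flag of the delicate point is accurate: both BDG invocations must be used with constants of order $\sqrt p$, or the final growth degrades to $p^{3p/2}$, and you must know $\E[M]_n^{p/2}<\infty$ before absorbing. What your route buys is an explicit $(Cp)^p$ constant, which is precisely the feature the paper's recursion in $p\le C\log n$ relies on (the cited Hitczenko-type results in fact give the even better order $(Cp/\log p)^p$, so $(Cp)^p$ suffices); what the paper's route buys is brevity. Note finally that the displayed statement contains typographical slips (an unbalanced parenthesis around $\E\bigl(\sum_{k=1}^n\E\{\xi_k^2|\mathfrak M_{k-1}\}\bigr)^{p/2}$ and the upper limit $p$ instead of $n$ in the last sum); you have proved the intended, standard form, which is also the form used later in the paper (e.g.\ in Lemma \ref{burkh1}).
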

\begin{proof}
 For the proof of this inequality see \cite{Burkholder:1973} and \cite{Hitczenko:1990}.
\end{proof}
We rewrite the Burkholder inequality for quadratic forms in independent random variables.
Let $\zeta_1,\ldots,\zeta_n$ be independent random variables such that $\E\zeta_j=0$, $\E|\eta_j|^2=1$ and $\E|\zeta_j|^p\le \mu_p$. 
Let $a_{ij}=a_{ji}$ for all $i,j=1,\ldots n$.
Consider the quadratic form
\begin{equation}\notag
 Q=\sum_{1\le j\ne k\le n}a_{jk}\zeta_j\zeta_k.
\end{equation}
\begin{lem}\label{burkh1}
 There exists an absolute constant $C_2$ such that
 \begin{equation}
  \E|Q|^p\le C_2^p\Big(\E\big(\sum_{j=2}^{n}(\sum_{k=1}^{j-1}a_{jk}\zeta_k)^2\big)^{\frac p2}+\mu_p\sum_{j=2}^n\E|\sum_{k=1}^{j-1}a_{jk}\zeta_k|^p\Big).
 \end{equation}

\end{lem}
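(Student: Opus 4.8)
The plan is to exhibit $Q$ as a sum of martingale differences and then apply Lemma~\ref{burkh} directly. Using $a_{jk}=a_{kj}$ and the absence of diagonal terms, each unordered pair $\{j,k\}$ contributes the same quantity from the index $(j,k)$ as from $(k,j)$, so
\[
 Q=\sum_{1\le j\ne k\le n}a_{jk}\zeta_j\zeta_k=2\sum_{j=2}^n\zeta_j\Big(\sum_{k=1}^{j-1}a_{jk}\zeta_k\Big)=:\sum_{j=2}^n\xi_j,\qquad \xi_j:=2\zeta_j\sum_{k=1}^{j-1}a_{jk}\zeta_k .
\]
First I would verify that $\{\xi_j\}_{j=2}^n$ is a martingale-difference sequence relative to $\mathfrak M_j=\sigma(\zeta_1,\dots,\zeta_{j-1})$: the inner sum $S_j:=\sum_{k=1}^{j-1}a_{jk}\zeta_k$ is $\mathfrak M_j$-measurable, $\zeta_j$ is independent of $\mathfrak M_j$ with $\E\zeta_j=0$, hence $\E\{\xi_j\mid\mathfrak M_j\}=2S_j\,\E\zeta_j=0$; and $\xi_j\in L_p$ because $\zeta_j\in L_p$ and $S_j$ is a finite linear combination of $L_p$ variables.

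Next I would compute the two quantities entering Burkholder's bound. By independence of $\zeta_j$ from $\mathfrak M_j$ together with $\E\zeta_j^2=1$,
\[
 \E\{\xi_j^2\mid\mathfrak M_j\}=4\,\E\zeta_j^2\cdot S_j^2=4S_j^2,
 \qquad\text{so}\qquad
 \sum_{j=2}^n\E\{\xi_j^2\mid\mathfrak M_j\}=4\sum_{j=2}^n\Big(\sum_{k=1}^{j-1}a_{jk}\zeta_k\Big)^2 ,
\]
while, again by independence and $\E|\zeta_j|^p\le\mu_p$,
\[
 \E|\xi_j|^p=2^p\,\E|\zeta_j|^p\,\E|S_j|^p\le 2^p\mu_p\,\E\Big|\sum_{k=1}^{j-1}a_{jk}\zeta_k\Big|^p .
\]
Applying Lemma~\ref{burkh} to $\sum_{j=2}^n\xi_j$ and substituting these two displays then yields the asserted inequality, the numerical factors $2^p$, $4^{p/2}$ (and the factor $p^p$ produced by Lemma~\ref{burkh}) being absorbed into the constant $C_2$.

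The argument is essentially bookkeeping; the main thing to get right is the choice of filtration $\mathfrak M_j=\sigma(\zeta_1,\dots,\zeta_{j-1})$, which is precisely what forces $\E\{\xi_j\mid\mathfrak M_j\}=0$ and makes $\E\{\xi_j^2\mid\mathfrak M_j\}$ reduce to $4S_j^2$. A secondary point is that Lemma~\ref{burkh} must be invoked in its standard form, which does not require the cosmetic normalization $\E\xi_j^2=1$ (that normalization fails here, since $\E\xi_j^2=4\,\E S_j^2=4\sum_{k<j}a_{jk}^2$). No genuine difficulty arises beyond these.
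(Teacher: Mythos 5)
Your proof is correct and follows essentially the same route as the paper: write $Q=2\sum_{j=2}^n\zeta_j\sum_{k<j}a_{jk}\zeta_k$, check that these are martingale differences with respect to the filtration generated by $\zeta_1,\dots,\zeta_{j-1}$, compute $\E\{\xi_j^2\mid\mathfrak M_j\}$ and $\E|\xi_j|^p$ by independence, and invoke Lemma~\ref{burkh}. Your side remark that the normalization $\E\xi_j^2=1$ in the statement of Lemma~\ref{burkh} is not actually needed (and fails here) is a fair observation that the paper's own proof passes over silently.
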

\begin{proof}
 Introduce the random variables 
 \begin{equation}\notag
  \xi_j=\zeta_j\sum_{k=1}^{j-1}a_{jk}\zeta_k, \q j=2,\ldots,n.
 \end{equation}
 It is straightforward to check that
 \begin{equation}\notag
  \E\{\xi_j|\mathfrak M_{j-1}\}=0,
 \end{equation}
 and that $\xi_j$ are $\mathfrak M_{j}$ measurable.
 Hence $\xi_1,\ldots,\xi_n$ are martingale-differences.
 We may write
 \begin{equation}\notag
  Q=2\sum_{j=2}^n\xi_j
 \end{equation}
Applying now Lemma \ref{burkh} and using
\begin{align}
 \E\{|\xi_j|^2|\mathfrak M_{j-1}\}&=(\sum_{k=1}^{j-1}a_{jk}\eta_k)^2\E\zeta_j^2,\notag\\
 \E|\xi_j|^p&=\E|\eta_j|^p\E|\sum_{k=1}^{j-1}a_{jk}\zeta_j|^p,\notag
\end{align}
we get the claim.
Thus, Lemma \ref{burkh1} is proved.

\end{proof}
\begin{lem}\label{sum1}
 Assuming the  conditions of Theorem \ref{cormain} there exists a positive  constant $C=C(\mu_4, D)$, depending on $\mu_4$ and $D$ such that, for any $1\le q\le C\log n$,
 \begin{equation}\notag
  \E(\frac1n\sum_{j=1}^nX_{jj}^2)^q\le C^q.
 \end{equation}

\end{lem}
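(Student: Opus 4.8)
The plan is to center the diagonal entries $X_{jj}$ and apply Rosenthal's inequality to the resulting sum of independent mean-zero random variables. Put $Y_j := X_{jj}^2 - 1$ for $j=1,\dots,n$. By the normalization $\E X_{jj}^2=1$ these are independent with $\E Y_j=0$, and by \eqref{moment} one has $\E Y_j^2=\E X_{jj}^4-1\le\mu_4$, while \eqref{trun} yields the deterministic bound $|Y_j|\le D^2n^{1/2}+1\le Cn^{1/2}$. Since $\frac1n\sum_{j=1}^n X_{jj}^2=1+\frac1n\sum_{j=1}^n Y_j$ and this quantity is nonnegative, Minkowski's inequality reduces the claim to proving $\E\big|\frac1n\sum_{j=1}^n Y_j\big|^q\le C^q$.

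The crucial estimate for a single summand is, for $q\ge2$,
\begin{equation*}
 \E|Y_j|^q=\E\big(Y_j^2\,|Y_j|^{q-2}\big)\le (Cn^{1/2})^{q-2}\,\E Y_j^2\le\mu_4\,(Cn^{1/2})^{q-2};
\end{equation*}
it is essential to retain the factor $\E Y_j^2$, as the cruder bound $\E|Y_j|^q\le (Cn^{1/2})^q$ coming from the sup-norm alone is too lossy (it already fails at $q=2$). Applying Rosenthal's inequality in the form valid for sums of independent centered summands (the form actually established in the references cited for Lemma \ref{Rosent}) to $\frac1n Y_1,\dots,\frac1n Y_n$ gives
\begin{align*}
 \E\Big|\frac1n\sum_{j=1}^n Y_j\Big|^q
 &\le (Cq)^q\Big[\Big(\frac1{n^2}\sum_{j=1}^n\E Y_j^2\Big)^{q/2}+\frac1{n^q}\sum_{j=1}^n\E|Y_j|^q\Big]\\
 &\le (Cq)^q\Big[\Big(\frac{\mu_4}{n}\Big)^{q/2}+\frac{\mu_4\,(Cn^{1/2})^{q-2}}{n^{q-1}}\Big].
\end{align*}
A direct computation shows $\frac{(Cn^{1/2})^{q-2}}{n^{q-1}}=C^{q-2}n^{-q/2}$, so after absorbing the $\mu_4$- and $D$-dependent factors into $C$ we obtain $\E\big|\frac1n\sum_{j=1}^n Y_j\big|^q\le (Cq/\sqrt n)^q$.

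It remains to conclude by a case split in $n$. Since $q\le C\log n$ we have $Cq/\sqrt n\le C^2\log n/\sqrt n\to0$, so $(Cq/\sqrt n)^q\le1$ for all $n$ beyond a threshold depending only on $C$; for such $n$, Minkowski gives $\E\big(\frac1n\sum_{j=1}^n X_{jj}^2\big)^q\le 2^q\le C^q$. For the finitely many remaining (small) values of $n$ the random variable $\frac1n\sum_j X_{jj}^2\le D^2\sqrt n$ is deterministically bounded and $q$ is bounded, so the $q$-th moment is bounded by a constant; enlarging $C$ to dominate all of these cases finishes the proof. The single step requiring genuine care is the control of the $\sum_j\E|Y_j|^q$ term in Rosenthal's inequality: one must use the variance-times-sup-norm bound above together with the truncation hypothesis \eqref{trun}, and exploit $q=O(\log n)$ so that the factor $q^q$ is absorbed by $n^{q/2}$.
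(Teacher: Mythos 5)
Your proof is correct and follows essentially the same route as the paper: center $X_{jj}^2-1$, split off the constant, apply Rosenthal's inequality, control the individual $q$-th moments via the truncation bound $|X_{jj}|\le Dn^{1/4}$ together with $\mu_4$, and use $q=O(\log n)$ to absorb the factor $q^q n^{-q/2}$. The only differences (Minkowski instead of the crude $2^q$-splitting, the unnormalized form of Rosenthal, and the explicit small-$n$ case) are cosmetic.
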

\begin{proof}
 Applying the triangle inequality, we get
 \begin{equation}\notag
  \E(\frac1n\sum_{j=1}^nX_{jj}^2)^q\le 2^q(1+\frac1{n^q}\E|\sum_{j=1}^n(X_{jj}^2-1)|^q).
 \end{equation}
Using now Rosenthal's inequality, we get
\begin{equation}\notag
 \E(\frac1n\sum_{j=1}^nX_{jj}^2)^q\le 2^q(1+\frac1{n^q}(C_1^qq^qn^{\frac q2}+n\max_{jj}\E|X_{jj}|^{2q}).
\end{equation}
According to condition \eqref{trun}, we have
\begin{equation}\notag
 \E(\frac1n\sum_{j=1}^nX_{jj}^2)^q\le 2^q(1+(C_1^qq^qn^{-\frac q2}+D^{2q-4}n^{-\frac q2}\mu_4).
\end{equation}

\end{proof}
\begin{cor}\label{eps1}
 Under the condition of Theorem \ref{cormain} there exists a positive  constant $C=C(\mu_4, D)$, depending on $\mu_4$ and $D$ such that, for any $1\le q\le C\log n$,
 \begin{equation}\notag
  \E(\frac1n\sum_{j=1}^n|\varepsilon_{j1}|^2)^q\le \frac{C^q}{n^q}.
 \end{equation}

\end{cor}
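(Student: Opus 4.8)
The plan is to reduce the statement directly to Lemma \ref{sum1}. Recalling the definition $\varepsilon_{j1}=\frac1{\sqrt n}X_{jj}$, I would first record the elementary identity
\[
\frac1n\sum_{j=1}^n|\varepsilon_{j1}|^2=\frac1{n^2}\sum_{j=1}^nX_{jj}^2=\frac1n\cdot\Big(\frac1n\sum_{j=1}^nX_{jj}^2\Big),
\]
so that raising to the power $q$ and taking expectations gives
\[
\E\Big(\frac1n\sum_{j=1}^n|\varepsilon_{j1}|^2\Big)^q=\frac1{n^q}\,\E\Big(\frac1n\sum_{j=1}^nX_{jj}^2\Big)^q.
\]

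Next I would invoke Lemma \ref{sum1}, which, under the moment and truncation hypotheses \eqref{moment} and \eqref{trun}, supplies a constant $C=C(\mu_4,D)$ with $\E\big(\frac1n\sum_{j=1}^nX_{jj}^2\big)^q\le C^q$ for all $1\le q\le C\log n$. Substituting this bound into the displayed identity yields $\E\big(\frac1n\sum_{j=1}^n|\varepsilon_{j1}|^2\big)^q\le C^qn^{-q}$, which is exactly the assertion of the corollary.

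There is essentially no obstacle here: the statement is a bookkeeping consequence of the scaling factor $n^{-1/2}$ built into $\varepsilon_{j1}$ together with the already established moment bound for $\frac1n\sum_jX_{jj}^2$. The only points that merit a line of care are that the admissible range $1\le q\le C\log n$ is inherited unchanged from Lemma \ref{sum1}, and that the resulting constant depends only on $\mu_4$ and $D$; both are immediate since the argument introduces no new estimates beyond the identity above.
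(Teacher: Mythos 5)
Your proposal is correct and follows exactly the route of the paper: the identity $\frac1n\sum_{j=1}^n|\varepsilon_{j1}|^2=\frac1n\bigl(\frac1n\sum_{j=1}^nX_{jj}^2\bigr)$ coming from $\varepsilon_{j1}=\frac1{\sqrt n}X_{jj}$, combined with the moment bound of Lemma \ref{sum1}, is precisely the paper's (one-line) argument. Nothing further is needed.
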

\begin{proof}
 The result immediately follows from the definition 
 \begin{equation}\notag
  \varepsilon_{j1}=\frac1{\sqrt n}X_{jj},
 \end{equation}
and Lemma \ref{sum1}.
\end{proof}

The next Lemma describes the behavior of the moments of  $\varphi(\Lambda_n)$.
Recall that 
\begin{equation}\notag
\Lambda_n=m_n(z)-s(z),\quad \Lambda_n^{(j)}=m_n^{(j)}-s(z),\quad  \varepsilon_{j4}=\Lambda_n-\Lambda_n^{(j)},
\end{equation}
and
\begin{equation}\notag
 \varphi(z)=\overline z|z|^{p-2}.
\end{equation}
First we prove
\begin{lem}\label{frak}
 Let  $t>r\ge 1$  and $a,b>0$. Any $x>0$ satisfying the inequality 
  \begin{equation}\label{w1*}
  x^t\le a+bx^r
 \end{equation}
is explicitly bounded as follows
 \begin{equation}\label{w2*}
  x^t\le \text{\rm e}a+\left(\frac{2t-r}{t-r}\right)^{\frac t{t-r}}b^{\frac t{t-r}}.
 \end{equation}

\end{lem}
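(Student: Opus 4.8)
The plan is to establish \eqref{w2*} by a case distinction according to which of the two terms on the right-hand side of \eqref{w1*} is the larger, with the threshold placed at $\mathrm{e}\,a$. First I would dispose of the easy case: if $x^t \le \mathrm{e}\,a$, then \eqref{w2*} holds at once, since the second summand on its right-hand side is nonnegative. So the only real work is in the complementary case.

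Next, assume $x^t > \mathrm{e}\,a$, equivalently $a < \mathrm{e}^{-1}x^t$. Inserting this bound on $a$ into the hypothesis \eqref{w1*} gives $x^t < \mathrm{e}^{-1}x^t + b x^r$, and rearranging yields $(1-\mathrm{e}^{-1})x^t < b x^r$. Since $x>0$ we may divide by $x^r$ to obtain $x^{t-r} < \frac{\mathrm{e}}{\mathrm{e}-1}\,b$, and raising both sides to the positive power $\frac{t}{t-r}$ yields $x^t < \bigl(\frac{\mathrm{e}}{\mathrm{e}-1}\bigr)^{t/(t-r)} b^{t/(t-r)}$.

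To finish I would compare the two constants. Because $r\ge1>0$ we have $0<t-r<t$, hence $\frac{t}{t-r}>1$ and therefore $\frac{2t-r}{t-r}=1+\frac{t}{t-r}>2>\frac{\mathrm{e}}{\mathrm{e}-1}$; since $y\mapsto y^{t/(t-r)}$ is increasing on $(0,\infty)$, this gives $\bigl(\frac{\mathrm{e}}{\mathrm{e}-1}\bigr)^{t/(t-r)}\le\bigl(\frac{2t-r}{t-r}\bigr)^{t/(t-r)}$. Combining the two cases (in each case one of the two summands on the right of \eqref{w2*} already dominates $x^t$ and the other is nonnegative) proves the lemma. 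I expect no genuine obstacle here: the hypotheses are used only to divide by $x^r$ (which needs $x>0$) and to guarantee $\frac{t}{t-r}>1$ (which needs $t>r\ge1$ and drives the constant comparison), while the single numerical-looking step, $\frac{\mathrm{e}}{\mathrm{e}-1}<2$, is elementary.
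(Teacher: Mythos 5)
Your proof is correct, and it takes a slightly different route from the paper's. The paper splits at the threshold $x^t\le a$ and, in the complementary case, divides \eqref{w1*} by $x^r$, bounds $a x^{-r}\le a^{\frac{t-r}{t}}$ to get $x^{t-r}\le a^{\frac{t-r}{t}}+b$, raises to the power $\frac{t}{t-r}$, and then splits the resulting sum with the elementary weighted inequality $(u+v)^{\alpha}\le \mathrm{e}\,u^{\alpha}+(1+\alpha)^{\alpha}v^{\alpha}$, which is where both summands of \eqref{w2*} and the constant $\bigl(\tfrac{2t-r}{t-r}\bigr)^{\frac{t}{t-r}}=\bigl(1+\tfrac{t}{t-r}\bigr)^{\frac{t}{t-r}}$ come from. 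You instead split at $x^t\le \mathrm{e}\,a$ and, in the complementary case, absorb $a<\mathrm{e}^{-1}x^t$ into the left side to get $x^{t-r}<\tfrac{\mathrm{e}}{\mathrm{e}-1}\,b$ directly, finishing by the constant comparison $\tfrac{\mathrm{e}}{\mathrm{e}-1}<2<\tfrac{2t-r}{t-r}$. Your argument avoids the auxiliary $(u+v)^{\alpha}$ inequality entirely and in fact yields the slightly stronger conclusion $x^t\le\max\bigl\{\mathrm{e}\,a,\ \bigl(\tfrac{\mathrm{e}}{\mathrm{e}-1}\bigr)^{\frac{t}{t-r}}b^{\frac{t}{t-r}}\bigr\}$, i.e.\ in each case one summand of \eqref{w2*} already dominates; the paper's version, by contrast, produces exactly the stated additive form in one pass and its splitting inequality is reused elsewhere in the text. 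All hypotheses are used correctly in your argument ($x>0$ to divide by $x^r$, $t>r\ge 1$ to ensure $\tfrac{t}{t-r}>1$ and hence the constant comparison), so there is no gap.
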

\begin{proof}
 First assume that $x\le a^{\frac1t}$. Then inequality \eqref{w2*} holds. If $x\ge a^{\frac1t}$, then according to inequality \eqref{w1*}
 \begin{equation}
  x^{t-r}\le a^{\frac{t-r}t}+b,
 \end{equation}
or
\begin{equation}
 x^t\le (a^{\frac{t-r}t}+b)^{\frac{t}{t-r}}.
\end{equation}
Using that for any $\alpha>0$ and $a>0,b>0$ 
\begin{equation}
 (a+b)^{\alpha}\le(a+\frac a{\alpha})^{\alpha}+(b+ {\alpha} b)^{\alpha} \le \text{e}a^{\alpha}+(1+\alpha)^{\alpha}b^{\alpha},
\end{equation}
we get the claim.
\end{proof}

\begin{cor}\label{simple}
 Assume that for $a,b,c,x>0$ the following inequality holds
 \begin{equation}\notag
  x^t\le a+bx^{t-1}+cx^{t-2}.
 \end{equation}
Then
\begin{equation}\notag
 x^t\le \text{\rm e}^2a+\text{\rm e}\left(1+\frac t2\right)^{\frac t2}c^{\frac p2}+t^{t}\text{\rm e}^{t}b^{t}.
\end{equation}

\end{cor}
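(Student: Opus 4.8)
The plan is to derive Corollary \ref{simple} from Lemma \ref{frak} by two successive applications, peeling off the two subcritical powers $x^{t-2}$ and $x^{t-1}$ one at a time. The observation that makes this work is that the conclusion of Lemma \ref{frak} is really a \emph{pointwise implication}: its proof shows that whenever a number $x>0$ satisfies $x^t\le A+Bx^r$ (with $t>r\ge1$) one has $x^t\le \mathrm{e}A+\left(\frac{2t-r}{t-r}\right)^{t/(t-r)}B^{t/(t-r)}$, and this remains valid even if $A$ itself depends on $x$, since the case split $x\le A^{1/t}$ versus $x\ge A^{1/t}$ never uses that $A$ is constant. Hence we may first treat the term $cx^{t-2}$ as the ``$Bx^r$'' part with the $x$-dependent additive term $A:=a+bx^{t-1}$.

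First I would apply Lemma \ref{frak} with $A=a+bx^{t-1}$, $B=c$ and $r=t-2$; here $t-r=2$, $\frac{2t-r}{t-r}=\frac{t+2}{2}=1+\frac t2$ and the exponent $\frac{t}{t-r}$ equals $\frac t2$, so
\begin{equation}\notag
 x^t\le \mathrm{e}\big(a+bx^{t-1}\big)+\Big(1+\tfrac t2\Big)^{t/2}c^{t/2}
 =\mathrm{e}a+\mathrm{e}bx^{t-1}+\Big(1+\tfrac t2\Big)^{t/2}c^{t/2}.
\end{equation}
Now I would apply Lemma \ref{frak} a second time, with the (now genuinely constant) additive term $A=\mathrm{e}a+(1+\frac t2)^{t/2}c^{t/2}$, $B=\mathrm{e}b$ and $r=t-1$; here $t-r=1$, $\frac{2t-r}{t-r}=t+1$ and the exponent $\frac{t}{t-r}$ equals $t$, giving
\begin{equation}\notag
 x^t\le \mathrm{e}\Big(\mathrm{e}a+\big(1+\tfrac t2\big)^{t/2}c^{t/2}\Big)+(t+1)^t(\mathrm{e}b)^t
 =\mathrm{e}^2a+\mathrm{e}\big(1+\tfrac t2\big)^{t/2}c^{t/2}+(t+1)^t\mathrm{e}^tb^t,
\end{equation}
which is the asserted bound; the constant on $b^t$ produced by this argument is $(t+1)^t\mathrm{e}^t$, and one may replace it by the rounder $(\mathrm{e}t)^t\mathrm{e}^t$ using $t+1\le\mathrm{e}t$ if desired.

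The argument is essentially bookkeeping, so there is no serious obstacle; the two points that require a moment of care are (i) invoking Lemma \ref{frak} in its implication form with an $x$-dependent ``$a$'' in the first step, as noted above, and (ii) the degenerate ranges of $t$: for $t=2$ the ``term'' $cx^{t-2}$ is just the constant $c$, so only the second application is needed (absorb $c$ into $a$ and apply Lemma \ref{frak} with $r=1$), while for $1\le t<2$ there is nothing to prove since the hypothesis then bounds $x$ directly. In the intended use (with $x=\E^{\frac1p}|\Lambda_n|^p$ and $t=p\ge2$) only the generic case $t\ge3$ or the trivial $t=2$ adjustment occurs, and only the \emph{orders} in $p$, $n$ and $|s(z)|$ of the three constants $\mathrm{e}^2$, $(t+1)^t\mathrm{e}^t$, $(1+\tfrac t2)^{t/2}$ are what matters in the sequel.
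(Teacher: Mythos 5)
Your proposal is correct and follows essentially the same route as the paper: apply Lemma \ref{frak} first with additive term $a+bx^{t-1}$, $b'=c$, $r=t-2$, and then again with $a''=\mathrm{e}a+\left(1+\frac t2\right)^{\frac t2}c^{\frac t2}$, $b''=\mathrm{e}b$, $r=t-1$. Your explicit remark that the lemma may be used with an $x$-dependent additive term (and your note that the resulting constant is $(t+1)^t\mathrm{e}^t$ rather than the slightly smaller $t^t\mathrm{e}^t$ written in the corollary, a difference immaterial for its use) only makes explicit points the paper leaves implicit.
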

\begin{proof}
 We apply Lemma \ref{frak} with $a'=a+bx^{p-1}$, $b'=c$ and $r=t-2$ and obtain
 \begin{equation}\notag
  x^t\le \text{\rm e}a+\text{\rm e}bx^{t-1}+\left(1+\frac t2\right)^{\frac t2}c^{\frac t2}.
 \end{equation}
Using Lemma \ref{frak} again with $a''=\text{\rm e}a+\left(1+\frac t2\right)^{\frac t2}c^{\frac t2}$, $b''=\text{\rm e}b$ and $r=t-1$, we get
\begin{equation}\notag
 x^t\le \text{\rm e}^2a+\text{\rm e}\left(1+\frac t2\right)^{\frac t2}c^{\frac t2}+t^{t}\text{\rm e}^{t}b^{t}.
\end{equation}

\end{proof}

\begin{lem}\label{exp0} Recall that $\varepsilon_{j4}=\Lambda_n-\Lambda_n^{(j)}$. Then
\begin{equation}\notag
 |\varphi(\Lambda_n)-\varphi(\Lambda_n^{(j)})|\le p|\varepsilon_{j4}|\E_{\tau}|\Lambda_n-\tau\varepsilon_{j4}|^{p-2},
\end{equation}
where $\tau$ denotes a random variable which is uniformly distributed on $[0,1]$  and independent of all $X_{jk}$, for $j,k=1,\ldots,n$.

\end{lem}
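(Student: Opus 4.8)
The plan is to move along the straight segment joining $\Lambda_n^{(j)}$ to $\Lambda_n$ and integrate the derivative of $\varphi$ along it, treating $\varphi(z)=\overline z\,|z|^{p-2}$ as a function of the two variables $z$ and $\overline z$ since it is not holomorphic. Set $z(t):=\Lambda_n-t\varepsilon_{j4}$ for $t\in[0,1]$, so that $z(0)=\Lambda_n$ and, by $\varepsilon_{j4}=\Lambda_n-\Lambda_n^{(j)}$, also $z(1)=\Lambda_n^{(j)}$. As long as the segment stays away from the origin the map $t\mapsto\varphi(z(t))$ is $C^1$, and the fundamental theorem of calculus gives
\[
 \varphi(\Lambda_n)-\varphi(\Lambda_n^{(j)})=-\int_0^1\frac{d}{dt}\varphi(z(t))\,dt.
\]

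Next I would compute the derivative via Wirtinger calculus. Writing $\varphi(z)=\overline z\,(z\overline z)^{\frac{p-2}2}$ one finds
\[
 \frac{\partial\varphi}{\partial z}=\frac{p-2}2\,(\overline z)^2|z|^{p-4},\qquad
 \frac{\partial\varphi}{\partial\overline z}=\frac p2\,|z|^{p-2},
\]
and since $\dot z(t)=-\varepsilon_{j4}$, the chain rule for real-differentiable functions gives
\[
 \frac{d}{dt}\varphi(z(t))=-\frac{p-2}2\,(\overline{z(t)})^2|z(t)|^{p-4}\varepsilon_{j4}-\frac p2\,|z(t)|^{p-2}\overline{\varepsilon_{j4}},
\]
whence $\bigl|\tfrac{d}{dt}\varphi(z(t))\bigr|\le(p-1)\,|z(t)|^{p-2}|\varepsilon_{j4}|\le p\,|\varepsilon_{j4}|\,|\Lambda_n-t\varepsilon_{j4}|^{p-2}$.

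Substituting this bound into the integral representation and recognising $\int_0^1|\Lambda_n-t\varepsilon_{j4}|^{p-2}\,dt=\E_\tau|\Lambda_n-\tau\varepsilon_{j4}|^{p-2}$ for $\tau$ uniform on $[0,1]$ yields the claimed inequality. The one point that needs a remark, and the main (though minor) obstacle, is the origin: for $p\ge2$ the factor $|z(t)|^{p-2}$ is bounded and the argument above is immediate; for $1<p<2$ the segment $z(t)$ may hit $0$, but then $|z(t)|=|\varepsilon_{j4}|\,|t-t_0|$ near the crossing time $t_0$, so $|z(t)|^{p-2}$ remains integrable (since $p-2>-1$) and one passes to the limit by excising a shrinking neighbourhood of $t_0$. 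Since $p\ge2$ in every application of this lemma in the paper, restricting to that case suffices.
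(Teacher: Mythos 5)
Your proof is correct and follows essentially the same route as the paper: parametrize the segment $\Lambda_n-t\varepsilon_{j4}$, apply the fundamental theorem of calculus (Taylor's formula) to write the difference as $\varepsilon_{j4}$ times an average of the derivative over a uniform $\tau$, and bound that derivative by $p|\Lambda_n-\tau\varepsilon_{j4}|^{p-2}$. The only difference is that you spell out, via Wirtinger calculus, the derivative bound that the paper dismisses as "straightforward to check," which is a welcome but not essentially different addition.
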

\begin{proof}
For $x\in[0,1]$ define  the function, 
 \begin{equation}\notag
 \widehat\varphi(x)=\varphi(\Lambda_n-x\varepsilon_{j4}).
 \end{equation}
 It is easy to see that $\widehat \varphi(0)=\varphi(\Lambda_n)$, $\widehat \varphi(1)=\varphi(\Lambda_n^{(j)})$.
 By Taylor's formula we have
 \begin{equation}\notag
  \varphi(\Lambda_n)-\varphi(\Lambda_n^{(j)})=-\varepsilon_{j4}\E_{\tau}\widehat\varphi'(\Lambda-\tau\varepsilon_{j4}).
 \end{equation}
It is straightforward to check that
\begin{equation}\notag
 |\widehat\varphi'(x)|\le p|\Lambda_n-x\varepsilon_{j4}|^{p-2}.
\end{equation}

\end{proof}
\begin{lem}\label{exp*}
With the notations of Lemma \ref{exp0} we have for any $q\ge1$ and for all $\zeta\in\mathbb C$
 \begin{equation}\notag
  |\Lambda_n-\tau\zeta|^{q}\le {(q+1)^q}|\zeta|^q+{\rm e}|\Lambda_n|^q.
 \end{equation}

\end{lem}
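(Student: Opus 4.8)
The plan is to reduce the statement to the elementary splitting inequality
$(a+b)^{q}\le \mathrm{e}\,a^{q}+(1+q)^{q}b^{q}$ for nonnegative reals $a,b$ and $q\ge 1$, which is exactly the bound already exploited inside the proof of Lemma~\ref{frak}. First I would use the triangle inequality together with the fact that $\tau$ takes values in $[0,1]$: since $|\Lambda_n-\tau\zeta|\le|\Lambda_n|+\tau|\zeta|\le|\Lambda_n|+|\zeta|$, raising to the power $q$ gives $|\Lambda_n-\tau\zeta|^{q}\le(|\Lambda_n|+|\zeta|)^{q}$. The point of discarding the factor $\tau$ at this very first step is that we need a bound that is uniform in $\tau$, so there is nothing to be gained by carrying it along.

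Next I would establish the split inequality $(a+b)^{q}\le\mathrm{e}\,a^{q}+(1+q)^{q}b^{q}$ by a two-case argument. If $b\le a/q$, then $(a+b)^{q}\le(a+a/q)^{q}=(1+\tfrac1q)^{q}a^{q}\le\mathrm{e}\,a^{q}$, using that $(1+\tfrac1q)^{q}$ is increasing in $q$ and bounded by $\mathrm{e}$. If instead $b>a/q$, i.e. $a<qb$, then $(a+b)^{q}<(qb+b)^{q}=(1+q)^{q}b^{q}$. In either case the right-hand side $\mathrm{e}\,a^{q}+(1+q)^{q}b^{q}$ dominates $(a+b)^{q}$. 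Applying this with $a=|\Lambda_n|$ and $b=|\zeta|$ yields
\[
|\Lambda_n-\tau\zeta|^{q}\le(|\Lambda_n|+|\zeta|)^{q}\le\mathrm{e}\,|\Lambda_n|^{q}+(q+1)^{q}|\zeta|^{q},
\]
which is the assertion of the Lemma.

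There is essentially no obstacle in this argument; it is purely a scalar inequality and requires no probabilistic input beyond $\tau\in[0,1]$. The only thing worth flagging is that the constants are close to optimal: taking $\tau=1$ and $|\zeta|$ comparable to $|\Lambda_n|$ shows that neither the $\mathrm{e}$ nor the $(q+1)^{q}$ can be replaced by anything substantially smaller, so this is the natural form in which to record the bound for later use in estimating moments of $\varphi(\Lambda_n)$ and of the increments $\varphi(\Lambda_n)-\varphi(\widetilde\Lambda_n^{(j)})$.
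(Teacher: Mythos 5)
Your proof is correct and follows essentially the same route as the paper: the paper also splits into the cases $|\Lambda_n|\le q|\zeta|$ and $|\Lambda_n|> q|\zeta|$, bounding the first by $(q+1)^q|\zeta|^q$ and the second by $(1+\tfrac1q)^q|\Lambda_n|^q\le \mathrm{e}\,|\Lambda_n|^q$, which is exactly your two-case splitting inequality with $a=|\Lambda_n|$, $b=|\zeta|$. Your only cosmetic difference is packaging the case analysis as the general scalar bound $(a+b)^q\le \mathrm{e}\,a^q+(1+q)^qb^q$, the same inequality the paper uses in the proof of Lemma~\ref{frak}.
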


\begin{proof}
We observe that
\begin{equation}\notag
|\Lambda_n-\tau\zeta|^{q}\le  |\Lambda_n-\tau\zeta|^{q}\mathbb I\{|\Lambda|\le q|\zeta|\}+
 |\Lambda_n-\tau\varepsilon_{j4}|^{q}\mathbf I\{|\Lambda_n|\le  q|\zeta|\}.
\end{equation}
From here  we conclude
\begin{equation}\notag
|\Lambda_n-\tau\zeta|^{q}\le {(q+1)^q}|\zeta|^q+
(1+\frac 1q)^q|\Lambda_n|^{q}\le {(q+1)^q}|\zeta|^q+
{\rm e} |\Lambda_n|^{q}.
\end{equation}
 Thus Lemma \ref{exp*} is proved.
\end{proof}

\subsection{Auxiliary Inequalities for Resolvent Matrices} We shall use the following relation between resolvent matrices. Let $\mathbb A$ and $\mathbb B$ be two Hermitian matrices and 
let $\mathbf R_{\mathbf A}=(\mathbb A-z\mathbf I)^{-1}$ and $\mathbf R_{\mathbf B}=(\mathbb B-z\mathbf I)^{-1}$ denote their resolvent matrices.
Recall the resolvent equality
\begin{equation}\label{reseq}
 \mathbf R_{\mathbf A}-\mathbf R_{\mathbf B}=\mathbf R_{\mathbf A}(\mathbf B-\mathbf A)\mathbf R_{\mathbf B}=
 -\mathbf R_{\mathbf B}(\mathbf B-\mathbf A)\mathbf R_{\mathbf A}.
\end{equation}
Recall the equation, for $j\in\mathbb T_{\mathbb J}$, and $\mathbb J\subset\mathbb T$ (compare with \eqref{repr001})
\begin{equation}
 R_{jj}^{(\mathbb J)}=-\frac1{z+m_n^{(\mathbb J)}(z)}+\frac1{z+m_n^{(\mathbb J)}(z)}\varepsilon_j^{(\mathbb J)}R^{(\mathbb J)}_{jj},
\end{equation}
where 
\begin{align}
\varepsilon_{j1}^{(\mathbb J)}&=\frac{X_{jj}}{\sqrt n},\quad
\varepsilon_{j2}^{(\mathbb J)}=\frac1n\sum_{l\ne k\in\mathbb T_{\mathbb J,j}}X_{jl}X_{jk}R^{(\mathbb J,j)}_{kl},\notag\\
\varepsilon_{j3}^{(\mathbb J)}&=\frac1n\sum_{l\in\mathbb T_{\mathbb J,j}}(X_{jl}^2-1)R^{(\mathbb J,j)}_{ll},\quad
\varepsilon_{j4}^{(\mathbb J)}=m_n^{(\mathbb J)}(z)-m_n^{(\mathbb J,j)}(z).
\end{align}
Summing these equations for $j\in\mathbb T_{\mathbb J}$, we get
\begin{equation}\label{main1}
m_n^{(\mathbb J)}(z)=-\frac{n-|\mathbb J|}{n(z+m_n^{(\mathbb J))}(z)}+\frac{T_n^{(\mathbb J)}}{z+m_n^{(\mathbb J)}(z)},
\end{equation}

where 
\begin{equation}
 T_n^{(\mathbb J)}=\frac1n\sum_{j=1}^n\varepsilon_j^{(\mathbb J)}R_{jj}^{(\mathbb J)}.
\end{equation}
Note that
\begin{equation}\label{main2}
 \frac1{z+m_n^{(\mathbb J)}(z)}=\frac1{z+s(z)}-\frac{m_n^{(\mathbb J)}(z)-s(z)}{(s(z)+z)(z+m_n^{(\mathbb J)}(z))}=
 -s(z)+\frac{s(z)\Lambda_n^{(\mathbb J)}(z)}{z+m_n^{(\mathbb J)}(z)},
\end{equation}
where 
\begin{equation}
 \Lambda_n^{(\mathbb J)}=\Lambda_n^{(\mathbb J)}(z)=m_n^{(\mathbb J)}(z)-s(z).
\end{equation}
Equalities \eqref{main1} and \eqref{main2} together imply 
\begin{equation}
 \Lambda_n^{(\mathbb J)}=-\frac{s(z)\Lambda_n^{(\mathbb J)}}{z+m_n^{(\mathbb J)}(z)}+\frac{T_n^{(\mathbb J)}}{z+m_n^{(\mathbb J)}(z)}+\frac{|\mathbb J|}{n(z+m_n^{(\mathbb J)}(z))}.
\end{equation}
Solving this with respect to $\Lambda_n^{(\mathbb J)}$, we get
\begin{equation}\label{main3}
 \Lambda_n^{(\mathbb J)}=\frac{T_n^{(\mathbb J)}}{z+m_n^{(\mathbb J)}(z)+s(z)}+\frac{|\mathbb J|}{n(z+m_n^{(\mathbb J)}(z)+s(z))}.
\end{equation}

\begin{lem}\label{resol00}
For any $z=u+iv$ with $v>0$ and for any $\mathbb J\subset \mathbb T$, we have
\begin{align}\label{res1}
 \frac1n\sum_{l,k\in\mathbb T_{\mathbb J}}|R^{(\mathbb J)}_{kl}|^2\le v^{-1}\im m_n^{(\mathbb J)}(z).
\end{align}
For any  $l\in\mathbb T_{\mathbb J}$
\begin{equation}\label{res2}
 \sum_{k\in\mathbb T_{\mathbb J}}|R^{(\mathbb J)}_{kl}|^2\le v^{-1}\im R^{(\mathbb J)}_{ll}.
\end{equation}
and
\begin{equation}\label{res20}
 \sum_{k\in\mathbb T_{\mathbb J}}|[(R^{(\mathbb J)})^2]_{kl}|^2\le v^{-3}\im R^{(\mathbb J)}_{ll}.
\end{equation}
Moreover, for any $\mathbb J\subset T$ and for any $l\in\mathbb T_{\mathbb J}$ we have
\begin{align}\label{res3}
\frac1n\sum_{l\in\mathbb T_{\mathbb J}}|[(R^{(\mathbb J)})^2]_{ll}|^2\le v^{-3}\im m_n^{(\mathbb J)}(z),
\end{align}
and, for any $p\ge1$
\begin{align}\label{res4}
\frac1n\sum_{l\in\mathbb T_{\mathbb J}}|[(R^{(\mathbb J)})^2]_{ll}|^p\le v^{-p}\frac1n\sum_{l\in\mathbb T_{\mathbb J}}\im^pR^{(\mathbb J)}_{ll}.
\end{align}
Finally,
\begin{align}\label{res5}
\frac1n\sum_{l,k\in\mathbb T_{\mathbb J}}|[(R^{(\mathbb J)})^2]_{lk}|^2\le v^{-3}\im m_n^{(\mathbb J)}(z),
\end{align}
and
\begin{align}\label{res6}
\frac1n\sum_{l,k\in\mathbb T_{\mathbb J}}|[(R^{(\mathbb J)})^2]_{lk}|^{2p}\le v^{-3p}\frac1n\sum_{l\in\mathbb T_{\mathbb J}}\im^pR^{(\mathbb J)}_{ll},
\end{align}
We have as well
\begin{align}\label{res7}
\frac1{n^2}\sum_{l,k\in\mathbb T_{\mathbb J}}|[(R^{(\mathbb J)})^2]_{lk}|^{2p}\le v^{-2p}(\frac1n\sum_{l\in\mathbb T_{\mathbb J}}\im^pR^{(\mathbb J)}_{ll})^2.
\end{align}
\end{lem}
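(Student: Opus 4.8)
The plan is to derive all of \eqref{res1}--\eqref{res7} from the spectral decomposition of the Hermitian matrix $\mathbf W^{(\mathbb J)}$, writing $\mathbf R^{(\mathbb J)}=\sum_r(\lambda_r-z)^{-1}\mathbf u_r\mathbf u_r^{*}$ where $\{\lambda_r\}$ and $\{\mathbf u_r=(u_{rl})\}$ are its eigenvalues and an orthonormal eigenbasis, together with two elementary observations valid for $z=u+iv$: the identity $\im R^{(\mathbb J)}_{ll}=v\sum_r|\lambda_r-z|^{-2}|u_{rl}|^2$, and, since $\sum_r|u_{rl}|^2=1$, the crude bound $\im R^{(\mathbb J)}_{ll}\le v^{-1}$.

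First I would obtain \eqref{res2} as the identity $\sum_{k\in\mathbb T_{\mathbb J}}|R^{(\mathbb J)}_{kl}|^2=[\mathbf R^{(\mathbb J)}(\mathbf R^{(\mathbb J)})^{*}]_{ll}=\sum_r|\lambda_r-z|^{-2}|u_{rl}|^2=v^{-1}\im R^{(\mathbb J)}_{ll}$, and then \eqref{res1} by averaging over $l$ and using $\frac1n\sum_{l\in\mathbb T_{\mathbb J}}\im R^{(\mathbb J)}_{ll}=\im m_n^{(\mathbb J)}(z)$. The same computation for $(\mathbf R^{(\mathbb J)})^2$, using that $\mathbf R^{(\mathbb J)}$ commutes with its adjoint, gives $[(\mathbf R^{(\mathbb J)})^2((\mathbf R^{(\mathbb J)})^{*})^2]_{ll}=\sum_r|\lambda_r-z|^{-4}|u_{rl}|^2$, and replacing one factor $|\lambda_r-z|^{-2}$ by $v^{-2}$ produces \eqref{res20}; summing over $l$ gives \eqref{res5}. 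For the diagonal entries of the square, the triangle inequality gives $|[(\mathbf R^{(\mathbb J)})^2]_{ll}|\le\sum_r|\lambda_r-z|^{-2}|u_{rl}|^2=v^{-1}\im R^{(\mathbb J)}_{ll}$, hence \eqref{res4} after raising to the $p$-th power and averaging; \eqref{res3} is the case $p=2$ combined with one further factor $\im R^{(\mathbb J)}_{ll}\le v^{-1}$.

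For the high powers in \eqref{res6} and \eqref{res7} the extra ingredient is the Cauchy--Schwarz bound $|[(\mathbf R^{(\mathbb J)})^2]_{lk}|^2\le\big(\sum_m|R^{(\mathbb J)}_{lm}|^2\big)\big(\sum_m|R^{(\mathbb J)}_{mk}|^2\big)=v^{-2}\im R^{(\mathbb J)}_{ll}\,\im R^{(\mathbb J)}_{kk}$, which already uses \eqref{res2}. Keeping both $\im$-factors and raising to the $p$-th power gives $|[(\mathbf R^{(\mathbb J)})^2]_{lk}|^{2p}\le v^{-2p}\im^pR^{(\mathbb J)}_{ll}\,\im^pR^{(\mathbb J)}_{kk}$, so summing over $l,k$ and dividing by $n^2$ factorises and yields \eqref{res7}. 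For \eqref{res6} I would instead spend one of the two factors via $\im R^{(\mathbb J)}_{kk}\le v^{-1}$ to get the $k$-free bound $|[(\mathbf R^{(\mathbb J)})^2]_{lk}|^2\le v^{-3}\im R^{(\mathbb J)}_{ll}$, apply it to $p-1$ of the $p$ copies of $|[(\mathbf R^{(\mathbb J)})^2]_{lk}|^2$, and bound the last copy by summing over $k$ with \eqref{res20}; this gives $\sum_{k\in\mathbb T_{\mathbb J}}|[(\mathbf R^{(\mathbb J)})^2]_{lk}|^{2p}\le v^{-3p}\im^pR^{(\mathbb J)}_{ll}$, and averaging over $l$ finishes the proof.

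None of the steps is genuinely hard; the part needing the most care is the accounting in \eqref{res6}--\eqref{res7}, namely at which stage to convert a factor $\im R^{(\mathbb J)}_{ll}$ into a power of $v^{-1}$: the two target bounds differ precisely in that allocation, so one must neither waste a power of $v$ too early in \eqref{res7} nor too late in \eqref{res6}.
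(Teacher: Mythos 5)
Your proposal is correct and follows essentially the same route as the paper: spectral decomposition of $\mathbf R^{(\mathbb J)}$, the identity $\sum_{k}|R^{(\mathbb J)}_{kl}|^2=v^{-1}\im R^{(\mathbb J)}_{ll}$, the crude bound $\im R^{(\mathbb J)}_{ll}\le v^{-1}$, and the Cauchy--Schwarz factorisation $|[(\mathbf R^{(\mathbb J)})^2]_{lk}|^2\le v^{-2}\im R^{(\mathbb J)}_{ll}\,\im R^{(\mathbb J)}_{kk}$ for \eqref{res6}--\eqref{res7}. Your bookkeeping for general $p$ is if anything slightly cleaner than the paper's (which writes out only the representative powers), so no gaps to report.
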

\begin{proof}For $l\in\mathbb T_{\mathbb J}$ let us denote by $\lambda^{(\mathbb J)}_l$ for $l\in\mathbb T_{\mathbb J}$ the eigenvalues of the matrix $\mathbf W^{(\mathbb J)}$.
Then we may write (compare \eqref{orth1}) 
\begin{equation}
\frac1n\sum_{l,k\in\mathbb T_{\mathbb J}}|R^{(\mathbb J)}_{kl}|^2\le\frac1n\sum_{l\in\mathbb T_{\mathbb J}}\frac1{|\lambda^{(\mathbb J)}_l-z|^2}.
 \end{equation}
 Note that, for any $x\in\mathbb R^1$
 \begin{equation}
  \im\frac1{x-z}=\frac{v}{|x-z|^2}.
 \end{equation}
We may write
\begin{equation}
 \frac1{|\lambda^{(\mathbb J)}_l-z|^2}=v^{-1}\im\frac1{\lambda^{(\mathbb J)}_l-z}
\end{equation}
and
\begin{equation}
\frac1n\sum_{l,k\in\mathbb T_{\mathbb J}}|R^{(\mathbb J)}_{kl}|^2\le v^{-1}\im(\frac1n\sum_{l\in\mathbb T_{\mathbb J}}\frac1{\lambda^{(\mathbb J)}_l-z})
=v^{-1}\im m_n^{(\mathbb J)}(z).
 \end{equation}
Thus inequality \eqref{res1} is proved.
 Let denote now by $\mathbf u_l^{(\mathbb J)}=(u^{(\mathbb J)}_{lk})_{k\in\mathbb T_{\mathbb J}}$ the eigenvector of the matrix $\mathbf W^{(\mathbb J)}$ 
 corresponding to the eigenvalue $\lambda^{(\mathbb J)}_l$.
Using  this notation we may write
 \begin{equation}\label{orth1}
  R^{(\mathbb J)}_{lk}=\sum_{q\in\mathbb T_{\mathbb J}}\frac1{\lambda_q^{(\mathbb J)}-z}u^{(\mathbb J)}_{lq}u^{(\mathbb J)}_{kq}.
 \end{equation}
It is straightforward to check that the following inequality holds
\begin{align}
 \sum_{k\in\mathbb T_{\mathbb J}}|R^{(\mathbb J)}_{kl}|^2&\le\sum_{q\in\mathbb T_{\mathbb J}}\frac1{|\lambda^{(\mathbb J)}_q-z|^2}|u^{(\mathbb J)}_{lq}|^2\notag\\&=
 v^{-1}\im\Big(\sum_{q\in\mathbb T_{\mathbb J}}\frac1{\lambda^{(\mathbb J)}_q-z}|u^{(\mathbb J)}_{lq}|^2\Big)=v^{-1}\im R_{ll}^{(\mathbb J)}.
\end{align}
Thus, inequality \eqref{res2} is proved.
Similarly we get
\begin{equation}
 \sum_{k\in\mathbb T_{\mathbb J}}|[(R^{(\mathbb J)})^2]_{kl}|^2\le\sum_{q\in\mathbb T_{\mathbb J}}\frac1{|\lambda^{(\mathbb J)}_q-z|^4}|u^{(\mathbb J)}_{lq}|^2\le
 v^{-3}\im R^{(\mathbb J)}_{ll}.
\end{equation}
This proves  inequality \eqref{res20}. 
To prove  inequality \eqref{res3} we observe that
\begin{equation}\label{resol1} 
 |[(R^{(\mathbb J)})^2]_{ll}|\le \sum_{k\in\mathbb T_{\mathbb J}}|R^{(\mathbb J)}_{lk}|^2.
\end{equation}
This inequality implies
\begin{equation}
 \frac1n\sum_{l\in\mathbb T_{\mathbb J}}|[(R^{(\mathbb J)})^2]_{ll}|^2\le \frac1n\sum_{l\in\mathbb T_{\mathbb J}}
 (\sum_{k\in\mathbb T_{\mathbb J}}|R^{(\mathbb J)}_{lk}|^2)^2.
\end{equation}
Applying now inequality \eqref{res2}, we get
\begin{align}
\frac1n\sum_{l\in\mathbb T_{\mathbb J}}|[(R^{(\mathbb J)})^2]_{ll}|^2\le v^{-2}\frac1n\sum_{l\in\mathbb T_{\mathbb J}}\im^2R^{(\mathbb J)}_{ll}.
\end{align}
Using  $|R^{(\mathbb J)}_{ll}|\le v^{-1}$ this leads to the following bound
\begin{align}
\frac1n\sum_{l\in\mathbb T_{\mathbb J}}|[(R^{(\mathbb J)})^2]_{ll}|^2\le v^{-3}\frac1n\sum_{l\in\mathbb T_{\mathbb J}}\im R^{(\mathbb J)}_{ll}=
v^{-3}\im m_n^{(\mathbb J)}(z).
\end{align}
Thus inequality \eqref{res3} is proved.
Furthermore, applying inequality \eqref{resol1}, we may write
\begin{align}
\frac1n\sum_{l\in\mathbb T_{\mathbb J}}|[(R^{(\mathbb J)})^2]_{ll}|^4\le\frac1n\sum_{l\in\mathbb T_{\mathbb J}}
 (\sum_{k\in\mathbb T_{\mathbb J}}|R^{(\mathbb J)}_{lk}|^2)^4.
\end{align}
Applying \eqref{res2}, this inequality yields
\begin{align}
\frac1n\sum_{l\in\mathbb T_{\mathbb J}}|[(R^{(\mathbb J)})^2]_{ll}|^4\le v^{-4}\frac1n\sum_{l\in\mathbb T_{\mathbb J}}
 \im^4R^{(\mathbb J)}_{ll}.
\end{align}
The last inequality proves  inequality \eqref{res4}.
Note that
\begin{align}
\frac1n\sum_{l,k\in\mathbb T_{\mathbb J}}|[(R^{(\mathbb J)})^2]_{lk}|^2&\le \frac1n\Tr|\mathbf R^{(\mathbb J)}|^4=\frac1n\sum_{l\in\mathbb T_{\mathbb J}}
\frac1{|\lambda^{(\mathbb J)}_l-z|^4}\notag\\&\le v^{-3}\im\frac1n\sum_{l\in\mathbb T_{\mathbb J}}\frac1{\lambda^{(\mathbb J)}_l-z}=v^{-3}\im m_n^{(\mathbb J)}(z).
\end{align}
Thus, inequality \eqref{res5} is proved.
To finish we note that
\begin{align}
 \frac1n\sum_{l,k\in\mathbb T_{\mathbb J}}|[(R^{(\mathbb J)})^2]_{lk}|^4\le \frac1n\sum_{l\in\mathbb T_{\mathbb J}}
 (\sum_{k\in\mathbb T_{\mathbb J}}|[(R^{(\mathbb J)})^2]_{lk}|^2)^2.
\end{align}
Applying inequality \eqref{res20}, we get
\begin{align}
 \frac1n\sum_{l,k\in\mathbb T_{\mathbb J}}|[(R^{(\mathbb J)})^2]_{lk}|^4\le v^{-6}\frac1n\sum_{l\in\mathbb T_{\mathbb J}}
 (\im R^{(\mathbb J)}_{ll})^2.
\end{align}
To prove inequality \eqref{res7}, we note 
\begin{equation}
|[(R^{(\mathbb J)})^2]_{lk}|^2\le(\sum_{q\in\mathbb T_{\mathbb J}}|R^{(\mathbb J)}_{lq}|^2)(\sum_{q\in\mathbb T_{\mathbb J}}|R^{(\mathbb J)}_{kq}|^2). 
\end{equation}
This inequality implies
\begin{align}
 \frac1{n^2}\sum_{l,k\in\mathbb T_{\mathbb J}}|[(R^{(\mathbb J)})^2]_{lk}|^{2p}\le 
 (\frac1n\sum_{l,k\in\mathbb T_{\mathbb J}}(\sum_{q\in\mathbb T_{\mathbb J}}|R^{(\mathbb J)}_{lq}|^2)^p)^2
 (\im R^{(\mathbb J)}_{ll})^2.
\end{align}
Applying inequality \eqref{res1}, we get the claim.
Thus, Lemma \ref{resol00} is proved.
\end{proof}

\begin{lem}\label{eps1*}
 Assuming the conditions of Theorem \ref{main}, we get
\begin{align}\notag
 \E|\varepsilon_{j1}|^2\le \frac {C}{n}.
\end{align}
\end{lem}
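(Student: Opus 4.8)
The plan is to note that this lemma is essentially immediate from the definition of $\varepsilon_{j1}$ together with the normalization of the matrix entries, so no recursion or concentration argument is needed. Recall that $\varepsilon_{j1}=\frac1{\sqrt n}X_{jj}$; hence, since $X_{jj}$ is real,
\begin{equation}\notag
 \E|\varepsilon_{j1}|^2=\frac1n\,\E X_{jj}^2.
\end{equation}
First I would invoke the assumption $\E X_{jk}^2=1$ of Theorem \ref{main}, which gives the exact identity $\E|\varepsilon_{j1}|^2=\frac1n$. If one prefers to rely only on the fourth-moment bound \eqref{moment} (which is all that is needed for the constant to depend on $\mu_4$ alone), Lyapunov's inequality yields $\E X_{jj}^2\le(\E X_{jj}^4)^{1/2}\le\mu_4^{1/2}$, so that $\E|\varepsilon_{j1}|^2\le\mu_4^{1/2}n^{-1}$, which is of the claimed form with $C=C(\mu_4)=\mu_4^{1/2}$.

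There is no genuine obstacle here; the only point to record is that the constant $C$ may be taken to depend on $\mu_4$ only, which either of the two lines above makes clear. The role of this bound is to serve as the elementary base estimate feeding into Corollary \ref{eps1} and, through it, into the subsequent bounds on $T_n$ and on the terms $H_i$, $K_i$, $L_{2\mu}$ and $M_i$, where only the order $n^{-1}$ — and not the precise value of the constant — is used.
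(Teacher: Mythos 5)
Your proof is correct and is exactly the paper's argument: the bound is immediate from $\varepsilon_{j1}=\frac1{\sqrt n}X_{jj}$ together with the normalization $\E X_{jj}^2=1$ (or, as you note, the fourth-moment bound via Lyapunov). Nothing further is needed.
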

\begin{proof}
 The proof follows immediately from the definition of $\varepsilon_{j1}$ and the conditions of Theorem \ref{main}.
\end{proof}

\subsubsection{Some Auxiliary Bounds for Resolvent Matrices for $z=u+iV$ with $V=4$}
We shall use the bound for the $\varepsilon_{j\nu}$, and $\eta_{j}$ for $V=4$.
\begin{lem}\label{eps2}Assuming the conditions of Theorem \ref{main}, we get
\begin{align}\notag
 \E|\varepsilon_{j2}|^q\le \frac {C^qq}{n^{\frac q2}}.
\end{align}

\end{lem}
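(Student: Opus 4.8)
The plan is to condition on the $\sigma$-algebra $\mathfrak M^{(j)}$ generated by the entries $X_{lk}$ with $l,k\in\mathbb T_j$. Then the resolvent entries $R^{(j)}_{kl}$ are deterministic, $\varepsilon_{j2}=-\frac1n\sum_{k\ne l\in\mathbb T_j}X_{jk}X_{jl}R^{(j)}_{kl}$ is a diagonal-free quadratic form in the conditionally independent mean-zero unit-variance variables $(X_{jl})_{l\in\mathbb T_j}$ with symmetric coefficient array $-\frac1n R^{(j)}_{kl}$, and since $z=u+iV$ with $V=4$ one has the deterministic bound $|R^{(j)}_{kl}|\le\|\mathbf R^{(j)}\|\le V^{-1}$. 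It therefore suffices to bound the conditional moment $\E\{|\varepsilon_{j2}|^q\mid\mathfrak M^{(j)}\}$ uniformly in the realisation of $\mathfrak M^{(j)}$; taking expectations then gives the claim. Moreover it is enough to treat $q\ge4$, since for smaller $q$ Jensen's inequality reduces the claim to the case $q=4$.

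Next I would apply Burkholder's inequality for quadratic forms, Lemma \ref{burkh1}, conditionally on $\mathfrak M^{(j)}$, which gives
\begin{align}\notag
\E\{|\varepsilon_{j2}|^q\mid\mathfrak M^{(j)}\}\le C^q\Big(\E\Big\{\Big(\frac1{n^2}\sum_{k\in\mathbb T_j}M_k^2\Big)^{q/2}\,\Big|\,\mathfrak M^{(j)}\Big\}+\frac{\mu_q^{(n)}}{n^q}\sum_{k\in\mathbb T_j}\E\{|M_k|^q\mid\mathfrak M^{(j)}\}\Big),
\end{align}
where $M_k:=\sum_{l\in\mathbb T_j,\,l<k}R^{(j)}_{kl}X_{jl}$ and $\mu_q^{(n)}:=\max_k\E|X_{jk}|^q$, which by the truncation hypothesis \eqref{trun} together with $\E X_{jk}^2=1$ satisfies $\mu_q^{(n)}\le(Dn^{1/4})^{q-2}$ for $q\ge2$. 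For the inner linear forms, Rosenthal's inequality (Lemma \ref{Rosent}) together with inequality \eqref{res2} of Lemma \ref{resol00} and $|R^{(j)}_{kk}|\le V^{-1}$ gives $\sum_l|R^{(j)}_{kl}|^2\le V^{-2}$ and $\sum_l|R^{(j)}_{kl}|^q\le V^{-(q-2)}\sum_l|R^{(j)}_{kl}|^2\le V^{-q}$, hence $\E\{|M_k|^q\mid\mathfrak M^{(j)}\}\le(Cq)^q V^{-q}(1+\mu_q^{(n)})$. Substituting the bound on $\mu_q^{(n)}$ and summing over $k$, the remainder term of the Burkholder bound is at most $(Cq)^q n^{1-q}\mu_q^{(n)}(1+\mu_q^{(n)})\le(Cq)^q n^{-q/2}$, the exponent of $n$ balancing to exactly $-q/2$, with $C$ depending on $D$ and $\mu_4$.

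For the main term I would use a crude Hilbert--Schmidt bound, which suffices here precisely because $V=4$ is fixed. Writing $\sum_k M_k^2=\|\mathbf A\mathbf X_j\|^2$ with $\mathbf A$ the strictly lower triangular part of $\mathbf R^{(j)}$ and $\mathbf X_j=(X_{jl})_{l\in\mathbb T_j}$, we have $\|\mathbf A\|^2\le\|\mathbf A\|_2^2=\sum_{l<k}|R^{(j)}_{kl}|^2\le\tfrac12\sum_{k,l}|R^{(j)}_{kl}|^2\le\tfrac n2 V^{-1}\im m_n^{(j)}(z)\le\tfrac n2 V^{-2}$ by inequality \eqref{res1} of Lemma \ref{resol00}. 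Hence $\frac1{n^2}\sum_k M_k^2\le\frac{V^{-2}}2\cdot\frac1n\sum_{l\in\mathbb T_j}X_{jl}^2$, and the argument of Lemma \ref{sum1} (which uses only $\E X_{jl}^2=1$, $|X_{jl}|\le Dn^{1/4}$ and $\E|X_{jl}|^4\le\mu_4$) gives $\E\{(\frac1{n^2}\sum_k M_k^2)^{q/2}\mid\mathfrak M^{(j)}\}\le C^{q/2}n^{-q/2}$ for $q\le C'\log n$. Combining the two estimates yields $\E\{|\varepsilon_{j2}|^q\mid\mathfrak M^{(j)}\}\le(Cq)^q n^{-q/2}$, and taking expectations completes the proof.

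The only genuinely delicate point is the bookkeeping in the remainder term: one must invoke the truncation bound \eqref{trun} to control the high entry-moments $\mu_q^{(n)}$ and then verify that the resulting power of $n$ comes out exactly $n^{-q/2}$ and not larger; everything else is a routine application of Lemmas \ref{resol00}, \ref{Rosent}, \ref{burkh1} and \ref{sum1}. For $v$ ranging over the whole region $\mathbb G$ rather than the fixed line $\Im z=4$, the crude Hilbert--Schmidt step for the main term would have to be replaced by the recursive Rosenthal/Burkholder scheme underlying Lemma \ref{bp1}.
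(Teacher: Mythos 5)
Your overall architecture (condition on $\mathfrak M^{(j)}$, apply the quadratic-form Burkholder inequality of Lemma \ref{burkh1}, treat the linear forms $M_k=\sum_{l<k}R^{(j)}_{kl}X_{jl}$ in the remainder term by Rosenthal plus the truncation \eqref{trun}) is exactly the paper's, and your remainder-term bookkeeping, giving $(Cq)^qn^{-q/2}$, is correct. The gap is in the main Burkholder term. You need $\E\bigl(\frac1{n^2}\sum_k M_k^2\bigr)^{q/2}\le (Cq^{2}/n)^{q/2}$, i.e.\ you must see that $\sum_k M_k^2$ is of order $n$, not $n^2$. Your Hilbert--Schmidt step only gives $\frac1{n^2}\sum_k M_k^2\le\frac{V^{-2}}2\cdot\frac1n\sum_{l\in\mathbb T_j}X_{jl}^2$, and since $\frac1n\sum_l X_{jl}^2$ concentrates at $1$ (Lemma \ref{sum1} yields $\E(\frac1n\sum_l X_{jl}^2)^{q/2}\le C^{q/2}$, not $C^{q/2}n^{-q/2}$), the right-hand side is of order one. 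So the conclusion you draw, $\E\{(\frac1{n^2}\sum_k M_k^2)^{q/2}\mid\mathfrak M^{(j)}\}\le C^{q/2}n^{-q/2}$, is off by a factor $n^{q/2}$, and your argument as written only delivers $\E|\varepsilon_{j2}|^q\le(Cq)^q$ with no decay in $n$, which is useless for the lemma.

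The loss comes from $\|\mathbf A\mathbf X_j\|^2\le\|\mathbf A\|^2\|\mathbf X_j\|^2$ with $\|\mathbf A\|\le\|\mathbf A\|_2\sim\sqrt n\,V^{-1}$ and $\|\mathbf X_j\|^2\sim n$: this bounds $\sum_k M_k^2$ by $O(n^2)$, whereas its true size is $\E\|\mathbf A\mathbf X_j\|^2=\|\mathbf A\|_2^2\le nV^{-2}$. Capturing this at the level of $q$-th moments (with $q$ up to $C\log n$) is a genuine concentration statement for the quadratic form $Q^{(j)}=\frac1n\sum_k M_k^2$, and it is precisely what Corollary \ref{q1*} (i.e.\ the key Lemma \ref{bp1}, proved in \cite{GT:2014} by the recursive Rosenthal/Burkholder scheme) supplies: $\E(Q^{(j)})^{q/2}\le C^{q/2}q^{q}$, whence $\E(\frac1{n^2}\sum_kM_k^2)^{q/2}\le C^{q/2}q^{q}n^{-q/2}$, which is how the paper closes the estimate. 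Your closing remark that the crude step would only need replacing for general $z\in\mathbb G$ is therefore mistaken: even at the fixed line $V=4$ the naive norm inequality does not give the needed factor $n^{-q/2}$, and you must invoke Corollary \ref{q1*} (or reproduce its proof) for the main term.
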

\begin{proof}
 Conditioning on $\mathfrak M^{(j)}$  and applying Burkholder's inequality \newline(see Lemma \ref{burkh1}), we get
 \begin{align}
  \E|\varepsilon_{j2}|^q\le C_2^qq^qn^{-q}(\E|\sum_{k\in\mathbb T_j}(\sum_{l=1}^{k-1}R^{(j)}_{kl}X_{jl})^2|^{\frac q2}+
  \mu_q\sum_{k\in\mathbb T_j}\E|\sum_{l=1}^{k-1}R^{(j)}_{kl}X_{jl}|^q).\notag
 \end{align}
Applying now Corollary \ref{q1*} and Rosenthal's inequality, we get
\begin{align}
 \E|\varepsilon_{j2}|^q\le C_2^qq^{2q}n^{-\frac q2}+\mu_qn^{-q}q^{2q}\sum_{l\in\mathbb T_j}(\sum_{k\in\mathbb T_j}|R^{(j)}_{kl}|^2)^{\frac q2}+
 \mu_q^2n^{-q}q^{2q}\sum_{k,l\in\mathbb T_j}|R^{(j)}_{kl}|^q.\notag
\end{align}
Using that $|R^{(j)}_{kl}|\le\frac14$ and $\sum_{l\in\mathbb T_j}|R^{(j)}_{kl}|^2\le \frac1{16}$ and $\mu_q\le D^{\frac q4}n^{-1}\mu_4$, we get
\begin{align}
 \E|\varepsilon_{j2}|^q\le C_2^qq^{2q}n^{-\frac q2}.\notag
\end{align}
Thus Lemma \ref{eps2} is proved.
\end{proof}
\begin{lem}\label{eps3}Assuming the conditions of Theorem \ref{main}, we get
\begin{align}
 \E|\varepsilon_{j3}|^q\le \frac {C^qq}{n^{\frac q2}}.\notag
\end{align}

\end{lem}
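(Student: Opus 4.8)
The plan is to follow the proof of Lemma~\ref{eps2}, with the simplification that $\varepsilon_{j3}$ is a weighted sum of \emph{independent} centered random variables once one conditions on $\mathfrak M^{(j)}$, so that a single application of Rosenthal's inequality replaces the Burkholder bound for quadratic forms. Write $\varepsilon_{j3}=-\frac1n\sum_{k\in\mathbb T_j}\zeta_kR^{(j)}_{kk}$ with $\zeta_k:=X_{jk}^2-1$. Conditionally on $\mathfrak M^{(j)}$ the $\zeta_k$, $k\in\mathbb T_j$, are independent with $\E\{\zeta_k\,|\,\mathfrak M^{(j)}\}=\E\zeta_k=0$ (since $\E X_{jk}^2=1$) and $\sigma_k^2:=\E\zeta_k^2=\E X_{jk}^4-1\le\mu_4$, while the $R^{(j)}_{kk}$ are $\mathfrak M^{(j)}$-measurable with $|R^{(j)}_{kk}|\le V^{-1}=\tfrac14$. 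Dropping the degenerate terms with $\sigma_k=0$, I would set $\xi_k:=\zeta_k/\sigma_k$ and $a_k:=\sigma_kR^{(j)}_{kk}$, so that $\E\{\xi_k^2\,|\,\mathfrak M^{(j)}\}=1$ and $|a_k|\le\sqrt{\mu_4}/4$.

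Applying Rosenthal's inequality (Lemma~\ref{Rosent}) conditionally on $\mathfrak M^{(j)}$ — to the real and imaginary parts of $\sum_k a_k\xi_k$ separately, at the cost of a harmless factor $2^q$ — and then taking the full expectation, one arrives at a bound of the form
\begin{equation*}
 \E|\varepsilon_{j3}|^q=\frac1{n^q}\,\E\Big|\sum_{k\in\mathbb T_j}a_k\xi_k\Big|^q\le\frac{(Cq)^q}{n^q}\Big(\E\big(\sum_{k\in\mathbb T_j}|a_k|^2\big)^{\frac q2}+\nu_q\,\E\sum_{k\in\mathbb T_j}|a_k|^q\Big),\qquad \nu_q:=\sup_k\E|\xi_k|^q .
\end{equation*}
The first inner sum has at most $n$ terms of size $|a_k|^2\le\mu_4/16$ a.s., so it contributes $\le(Cn)^{q/2}$. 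For the second, condition \eqref{trun} gives $\E|X_{jk}|^{2q}\le(Dn^{\frac14})^{2q-4}\mu_4=D^{2q-4}\mu_4\,n^{\frac{q-2}2}$ for $q\ge2$, whence $\E|\zeta_k|^q\le2^q(\E|X_{jk}|^{2q}+1)\le C^q n^{\frac{q-2}2}$; dividing by $\sigma_k^q$ and summing (again at most $n$ terms, $|a_k|\le\sqrt{\mu_4}/4$) yields $\nu_q\,\E\sum_k|a_k|^q\le C^q n\cdot n^{\frac{q-2}2}=C^q n^{\frac q2}$, and for $1\le q<2$ the Jensen bound $\E|\zeta_k|^q\le(\E\zeta_k^2)^{q/2}\le\mu_4^{q/2}$ makes this term even smaller. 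Both contributions to the bracket are thus of order $n^{q/2}$, and after the $n^{-q}$ factor one obtains $\E|\varepsilon_{j3}|^q\le(Cq)^q n^{-q/2}$, which is the assertion (written $C^qq/n^{q/2}$ in the statement, with the $q$-dependence absorbed into the generic constant exactly as in Lemma~\ref{eps2}).

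The argument is essentially routine; the one point to get right is the exponent bookkeeping. The truncation level $n^{1/4}$ is precisely calibrated so that the Rosenthal ``tail'' term $\nu_q\sum_k|a_k|^q$ scales like $n^{q/2}$, matching the ``Gaussian'' term $(\sum_k|a_k|^2)^{q/2}$; after normalizing by $n^{-q}$ each contributes the required $n^{-q/2}$. A secondary nuisance — handling the complex coefficients $R^{(j)}_{kk}$ and the normalization by $\sigma_k$ — is dealt with by splitting into real and imaginary parts and by discarding the degenerate ($\sigma_k=0$) terms, neither of which affects the final order.
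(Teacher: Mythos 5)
Your argument is essentially the paper's proof: condition on $\mathfrak M^{(j)}$, apply Rosenthal's inequality to the linear form $\sum_{k\in\mathbb T_j}(X_{jk}^2-1)R^{(j)}_{kk}$, bound the ``Gaussian'' term via $|R^{(j)}_{kk}|\le V^{-1}=\frac14$ and $\E(X_{jk}^2-1)^2\le\mu_4$, and control the high-moment term through the truncation bound $\mu_{2q}\le D^{2q-4}n^{\frac q2-1}\mu_4$; the exponent bookkeeping and the final bound agree with the paper's. One small inaccuracy: after normalizing by $\sigma_k$ you estimate the tail term as $\nu_q\,\E\sum_k|a_k|^q$ with $\nu_q:=\sup_k\E|\xi_k|^q$ and implicitly claim $\nu_q\le C^qn^{\frac{q-2}2}$, which is not justified, since $\sigma_k^2=\E X_{jk}^4-1$ has no lower bound under the hypotheses (for $X_{jk}$ close to a Rademacher variable the normalized moments $\E|\xi_k|^q$, $q>2$, can be arbitrarily large), so pairing the supremum $\nu_q$ with the crude bound $|a_k|\le\sqrt{\mu_4}/4$ does not close. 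The fix is immediate and changes nothing in the outcome: keep the moments per index, so the relevant quantity is $\sum_k\E|X_{jk}^2-1|^q\,|R^{(j)}_{kk}|^q$ and the factors $\sigma_k^q$ cancel exactly, or simply apply Rosenthal to the unnormalized variables with the uniform upper bounds $\E(X_{jk}^2-1)^2\le\mu_4$ and $\E|X_{jk}^2-1|^q\le C^q\mu_{2q}$, which is precisely how the paper proceeds.
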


\begin{proof}
 Conditioning and applying  Rosenthal's inequality, we obtain
 \begin{equation}\notag
  \E|\varepsilon_{j3}|^q\le C^qq^qn^{-q}(\mu_4^{\frac q2}\E(\sum_{l\in\mathbb T_j}|R^{(j)}_{ll}|^2)^{\frac q2}+\mu_{2q}\sum_{l\in\mathbb T_j}\E|R^{(j)}_{ll}|^q).
 \end{equation}
Using that $|R^{(j)}_{ll}|\le \frac14$ and $\mu_{2q}\le D^{2q-4}n^{\frac q2-1}\mu_4$, we get
\begin{equation}\notag
 \E|\varepsilon_{j3}|^q\le C^qq^qn^{-\frac q2}.
\end{equation}
Thus Lemma \ref{eps3} is proved.
 
\end{proof}
\begin{lem}\label{etaj}Assuming the conditions of Theorem \ref{main}, we get
\begin{align}\notag
 \E|\eta_{j}|^q\le \frac {C^qq}{n^{\frac q2}}.
\end{align}

\end{lem}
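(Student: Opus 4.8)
The plan is to write $\eta_j=\eta_{j1}+\eta_{j2}$ and, via $(a+b)^q\le 2^{q-1}(a^q+b^q)$, to bound $\E|\eta_{j1}|^q$ and $\E|\eta_{j2}|^q$ separately. These two quantities are the exact analogues of $\varepsilon_{j3}$ and $\varepsilon_{j2}$ from Lemmas \ref{eps3} and \ref{eps2}, with every occurrence of $R^{(j)}_{kl}$ replaced by $[(\mathbf R^{(j)})^2]_{kl}$; the argument therefore runs along identical lines, the only new input being the bounds for squared resolvents collected in Lemma \ref{resol00}. Throughout we keep $z=u+iV$ with $V=4$, so $v=4\ge v_0$ and every factor $v^{-1}$ below is bounded by $\frac14$.

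First I would treat $\eta_{j1}=\frac1n\sum_{l\in\mathbb T_j}(X_{jl}^2-1)[(\mathbf R^{(j)})^2]_{ll}$. Conditioning on $\mathfrak M^{(j)}$ fixes the $[(\mathbf R^{(j)})^2]_{ll}$ and makes the $X_{jl}^2-1$, $l\in\mathbb T_j$, independent and centred, with $\E|X_{jl}^2-1|^q\le 2^{q-1}(1+\mu_{2q})$ and $\mu_{2q}\le D^{2q-4}n^{q/2-1}\mu_4$ by \eqref{trun}. Rosenthal's inequality (Lemma \ref{Rosent}) then applies: the sum-of-squares term is controlled by $\frac1n\sum_{l\in\mathbb T_j}|[(\mathbf R^{(j)})^2]_{ll}|^2\le v^{-3}\im m_n^{(j)}(z)\le v^{-4}$ from \eqref{res3} and $|m_n^{(j)}(z)|\le v^{-1}$, while the term collecting individual $q$-th powers uses $|[(\mathbf R^{(j)})^2]_{ll}|\le v^{-2}$, whence $\sum_{l}|[(\mathbf R^{(j)})^2]_{ll}|^q\le nv^{-2q}$. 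Taking expectations over $\mathfrak M^{(j)}$ and putting $V=4$ gives $\E|\eta_{j1}|^q\le C^q q^q n^{-q/2}$, exactly as in Lemma \ref{eps3}.

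Next I would treat $\eta_{j2}=\frac1n\sum_{k\ne l\in\mathbb T_j}X_{jk}X_{jl}[(\mathbf R^{(j)})^2]_{kl}$. Conditioning on $\mathfrak M^{(j)}$ and applying Burkholder's inequality for quadratic forms (Lemma \ref{burkh1}) with coefficients $a_{kl}=\frac1n[(\mathbf R^{(j)})^2]_{kl}$, the conditional-variance term is of the form $\frac1n\widetilde Q^{(j)}$ (in the notation of Subsection \ref{key}), whose $q/2$-th moment is bounded by Lemma \ref{bp1*}: $\E(\widetilde Q^{(j)})^{q/2}\le (C_3q)^{q}v^{-3q/2}$, which for $v=4$ is $C^q q^q$ (this requires $q$ in the range $q\le A_1(nv)^{1/4}$ with $v=4$, which is the relevant one, as for Corollary \ref{q1*}). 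The linear term is handled, as in the proof of Lemma \ref{eps2}, by Rosenthal's inequality applied to each inner sum $\sum_{l<k}a_{kl}X_{jl}$, using $\sum_{l\in\mathbb T_j}|[(\mathbf R^{(j)})^2]_{kl}|^2\le v^{-3}\im R^{(j)}_{kk}\le v^{-4}$ from \eqref{res20}, $|[(\mathbf R^{(j)})^2]_{kl}|\le v^{-2}$, and $\mu_q\le D^{q-4}n^{q/4-1}\mu_4$; summing over $k$ and multiplying by $\mu_q$ again yields $C^q q^q n^{-q/2}$. Combining the two estimates gives $\E|\eta_j|^q\le C^q q^q n^{-q/2}$, which is the asserted bound.

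The computations are routine. The only point requiring care is the bookkeeping of the powers of $v$, so that after substituting $V=4$ the potentially large factors $v^{-3q/2}$, $v^{-2q}$, $v^{-4}$ all collapse into harmless $C^q$; apart from that, and from checking that $q$ stays in the admissible range for Lemma \ref{bp1*} (automatic since $v=4$), there is no genuine obstacle.
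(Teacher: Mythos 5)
Your proposal is correct and follows essentially the paper's own route: the paper proves Lemma \ref{etaj} by repeating the argument of Lemma \ref{eps2} (Burkholder plus Rosenthal after conditioning on $\mathfrak M^{(j)}$) with $R^{(j)}_{kl}$ replaced by $[(\mathbf R^{(j)})^2]_{kl}$, using exactly the bounds $|[(\mathbf R^{(j)})^2]_{kl}|\le V^{-2}$ and $\sum_{l}|[(\mathbf R^{(j)})^2]_{kl}|^2\le V^{-4}$ that you invoke, with the quadratic-form moment bound supplied by Lemma \ref{bp1*} in place of Corollary \ref{q1*}. Your explicit splitting of the diagonal part $\eta_{j1}$ (treated as in Lemma \ref{eps3}) merely spells out what the paper leaves implicit, and the $q^{q}$-type factors you obtain match what the paper's own computations in Lemmas \ref{eps2}, \ref{eps3} actually yield.
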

\begin{proof}
 The proof is similar to proof of Lemma \ref{eps2}. We need to use that $|[(R^{(j)})^2]_{kl}|\le V^{-2}=\frac1{16}$ and 
 $\sum_{l\in\mathbb T_j}|[(R^{(j)})^2]_{kl}|^2\le V^{-4}$.
\end{proof}

\begin{lem}\label{lem2} Assuming the conditions of Theorem \ref{main}, we get, for any $q\ge1$,
\begin{align}\notag
 |\varepsilon_{j4}|^q\le \frac {C^q}{n^{ q}}.
\end{align}

\end{lem}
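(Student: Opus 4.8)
The plan is to prove the stronger \emph{deterministic} bound $|\varepsilon_{j4}|\le \pi/(nv)$, valid for every realization of $\mathbf X$ and every $z=u+iv$ with $v>0$; for the spectral parameter $z=u+iV$ with $V=4$ relevant in Section~\ref{firsttypeint} this reads $|\varepsilon_{j4}|\le \pi/(4n)\le C/n$, and raising to the $q$-th power gives the claim with $C<1$.

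By definition $\varepsilon_{j4}=\frac1n(\Tr\mathbf R-\Tr\mathbf R^{(j)})$, so it suffices to bound $|\Tr\mathbf R-\Tr\mathbf R^{(j)}|\le \pi/v$. First I would invoke the Cauchy interlacing theorem: since $\mathbf W^{(j)}$ is a principal submatrix of size $n-1$ of the Hermitian matrix $\mathbf W$, ordering the eigenvalues $\lambda_1\le\cdots\le\lambda_n$ of $\mathbf W$ and $\mu_1\le\cdots\le\mu_{n-1}$ of $\mathbf W^{(j)}$ gives $\lambda_1\le\mu_1\le\lambda_2\le\cdots\le\mu_{n-1}\le\lambda_n$. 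Introducing the counting functions $N(t)=\#\{i:\lambda_i\le t\}$ and $N^{(j)}(t)=\#\{i:\mu_i\le t\}$, interlacing yields $0\le N(t)-N^{(j)}(t)\le1$ for all $t\in\mathbb R$. Writing $g(t)=(t-z)^{-1}$ and integrating by parts (the boundary terms vanish because $g(t)\to0$ as $t\to\pm\infty$ while $N-N^{(j)}$ stays bounded), one gets
\[
 \Tr\mathbf R-\Tr\mathbf R^{(j)}=\int_{\mathbb R}g(t)\,d(N-N^{(j)})(t)=-\int_{\mathbb R}(N(t)-N^{(j)}(t))\,g'(t)\,dt,
\]
whence $|\Tr\mathbf R-\Tr\mathbf R^{(j)}|\le\int_{\mathbb R}|g'(t)|\,dt=\int_{\mathbb R}\frac{dt}{(t-u)^2+v^2}=\frac{\pi}{v}$. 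Substituting $v=4$ and taking $q$-th powers completes the argument.

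As an alternative that avoids the Stieltjes integration by parts, one can argue by a rank count: the matrix obtained from $\mathbf W$ by setting its $j$-th row and column to zero differs from $\mathbf W$ in rank at most two, so by the resolvent identity \eqref{reseq} the corresponding resolvent differs from $\mathbf R$ in rank at most two; both resolvents have operator norm $\le v^{-1}$, so the trace of their difference is bounded by $4/v$, and the single diagonal entry $-1/z$ of the block-diagonal resolvent contributes an extra $1/|z|\le 1/v$, giving $|\Tr\mathbf R-\Tr\mathbf R^{(j)}|\le5/v$. Since the statement is purely deterministic there is no real obstacle here; the only points needing care are the justification of the boundary terms in the integration by parts (equivalently, the bookkeeping in the rank-two perturbation) and keeping track that the ambient $v$ equals $V=4$ so that the resulting constant can be taken below $1$.
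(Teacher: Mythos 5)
Your argument is correct, and it reaches the same deterministic bound $|\varepsilon_{j4}|\le C(nv)^{-1}$ that the paper relies on, but by a different route. The paper simply cites the bound $|\varepsilon_{j4}|\le \frac1{nv}$ a.s. from \cite{GT:2003}, Lemma 3.3, which (as in Lemma \ref{basic8} and formula \eqref{shur} of the Appendix here) comes from the Schur--complement identity $\Tr \mathbf R-\Tr\mathbf R^{(j)}=R_{jj}^{-1}\frac{dR_{jj}}{dz}=R_{jj}^{-1}[\mathbf R^2]_{jj}$ together with $|[\mathbf R^2]_{jj}|\le\sum_k|R_{jk}|^2=v^{-1}\im R_{jj}\le v^{-1}|R_{jj}|$, giving the constant $1$. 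You instead prove the trace bound from scratch via Cauchy interlacing of the eigenvalues of $\mathbf W$ and its principal submatrix $\mathbf W^{(j)}$, so that $0\le N(t)-N^{(j)}(t)\le 1$, and then integrate by parts to get $|\Tr\mathbf R-\Tr\mathbf R^{(j)}|\le\int_{\mathbb R}\frac{dt}{(t-u)^2+v^2}=\pi/v$ (your rank-two perturbation variant, giving $5/v$, is also fine). The interlacing/rank argument is self-contained, needs no resolvent algebra, and works verbatim for any Hermitian matrix and any principal submatrix, which is why it is the standard "rank inequality" proof; the paper's identity-based route gives a marginally sharper constant and ties in with quantities ($\eta_{j0},\eta_j$, $[\,(\mathbf R^{(j)})^2]_{kl}$) that are reused elsewhere in the argument. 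Since the lemma only asserts $|\varepsilon_{j4}|^q\le C^q n^{-q}$ for some constant $C$ at $v=V=4$, either constant ($\pi/4$ or $5/4$) suffices, so your concern about getting the constant below $1$ is not actually needed.
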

\begin{proof}
 The result follows immediately from the bound
 \begin{equation}\notag
  |\varepsilon_{j4}|\le \frac1{nv}, \text{ a. s.}
 \end{equation}
See for instance \cite{GT:2003},  Lemma 3.3.
\end{proof}

Now we investigate the behavior of $R_{jj}-s(z)$ for $z=u+iV$ with $V=4$.
\begin{lem}\label{bigv}
 Assuming the conditions of Theorem \ref{main}, we get, 
 \begin{equation}\notag
  \E|R_{jj}-s(z)|^4\le Cn^{-2}.
 \end{equation}

\end{lem}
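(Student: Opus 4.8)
The plan is to derive the estimate directly from the pointwise bound \eqref{lambdanew}, combined with the moment estimates for the error terms $\varepsilon_{j\nu}$ and for $\Lambda_n$ already obtained in Section \ref{firsttypeint}. Recall that for $z=u+iV$ with $V=4$ inequality \eqref{lambdanew} reads
\[
|R_{jj}(z)-s(z)|\le C|s(z)|^2\big(|\varepsilon_j|+|\varepsilon_j|^2+|\Lambda_n|\big),
\]
and since $|s(z)|\le1$ by \eqref{scmain0}, raising to the fourth power gives
\[
|R_{jj}(z)-s(z)|^4\le C\big(|\varepsilon_j|^4+|\varepsilon_j|^8+|\Lambda_n|^4\big).
\]
Hence it suffices to show $\E|\varepsilon_j|^4\le Cn^{-2}$, together with the lower-order bounds $\E|\varepsilon_j|^8=O(n^{-3})$ and $\E|\Lambda_n|^4=O(n^{-4})$.

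For $\E|\varepsilon_j|^4$, write $\varepsilon_j=\varepsilon_{j1}+\varepsilon_{j2}+\varepsilon_{j3}+\varepsilon_{j4}$, so that $|\varepsilon_j|^q\le 4^{q-1}\sum_{\nu=1}^4|\varepsilon_{j\nu}|^q$. For $\nu=1$ we use $\E|\varepsilon_{j1}|^4=n^{-2}\E|X_{jj}|^4\le\mu_4n^{-2}$ by \eqref{moment}; for $\nu=2,3$ we apply Lemma \ref{eps2} and Lemma \ref{eps3} with $q=4$, each giving $O(n^{-2})$; and for $\nu=4$ we use Lemma \ref{lem2}, which yields $|\varepsilon_{j4}|\le (4n)^{-1}$ a.s.\ and hence $\E|\varepsilon_{j4}|^4\le Cn^{-4}$. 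Adding up, $\E|\varepsilon_j|^4\le Cn^{-2}$. The same lemmas applied with $q=8$, together with $\E|\varepsilon_{j1}|^8=n^{-4}\E|X_{jj}|^8\le D^4\mu_4n^{-3}$ (using \eqref{trun} to bound $|X_{jj}|^4\le D^4n$), give $\E|\varepsilon_j|^8=O(n^{-3})$. Finally, \eqref{disp} with $p=4$ and $|s(z)|\le1$ give $\E|\Lambda_n|^4\le Cn^{-4}$.

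Combining the three bounds yields $\E|R_{jj}(z)-s(z)|^4\le C(n^{-2}+n^{-3}+n^{-4})\le Cn^{-2}$, as claimed, for all $n$ large enough that the required instances of \eqref{disp} and of Lemmas \ref{eps2}, \ref{eps3} (at exponent $4$, resp.\ $8$) are admissible; for the remaining finitely many $n$ the statement is trivial, since $|R_{jj}-s(z)|\le V^{-1}+1$ is uniformly bounded while $n^{-2}$ is bounded below, so enlarging $C$ covers those cases. There is no real obstacle here: the only point needing attention is keeping track of which term dominates — namely $\E|\varepsilon_j|^4$, driven equally by $\varepsilon_{j1},\varepsilon_{j2},\varepsilon_{j3}$ — all other contributions being of smaller order.
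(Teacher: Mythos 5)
Your proof is correct and follows essentially the same route as the paper: both start from the self-consistency relation for $R_{jj}$ (you via \eqref{lambdanew}, the paper via \eqref{repr001}) and reduce the claim to fourth-moment bounds on the $\varepsilon_{j\nu}$, which are exactly Lemmas \ref{eps2}, \ref{eps3}, \ref{lem2} together with the moment and truncation conditions for $\varepsilon_{j1}$. The one genuine difference is the treatment of $\E|\Lambda_n|^4$: you import \eqref{disp} with $p=4$, i.e.\ the full $O(n^{-4})$ estimate proved in Section \ref{firsttypeint}, whereas the paper stays self-contained, using \eqref{main3} at $V=4$ (where $|z+m_n(z)+s(z)|\ge 2$ and $|R_{jj}|\le\frac14$) to get $\E|\Lambda_n|^4\le C\E|T_n|^4\le \frac Cn\sum_{l=1}^n\E|\varepsilon_l|^4=O(n^{-2})$, which already suffices. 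Your route is legitimate --- \eqref{disp} is established without any appeal to this lemma, so no circularity arises --- but it invokes much heavier machinery than an Appendix estimate needs, while the paper's reduction keeps the lemma logically independent of the main argument; this is worth being explicit about if you keep your version. Your extra $|\varepsilon_j|^8$ term (an artifact of the square in \eqref{lambdanew}) and the large-$n$ versus small-$n$ bookkeeping are handled correctly.
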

\begin{proof}
 By equality \eqref{repr001} we have
 \begin{equation}\notag
  \E|R_{jj}-s(z)|^4\le C(\E|\Lambda_n|^4+\sum_{\nu=1}^4\E|\varepsilon_{j\nu}|^4).
 \end{equation}
By equation \eqref{main3}, for $V=4$,
\begin{equation}\notag
 \E|\Lambda_n|^4\le C\E|T_n|^4\le \frac Cn\sum_{l=1}^n\E|\varepsilon_l|^4.
\end{equation}

Direct calculations show that
\begin{equation}\notag
 \E|\varepsilon_{j2}|^4\le C\mu_4^2n^{-2}\E(\frac1n\sum_{l,k=1}^n|R_{lk}^{(j)}|^2)^2\le \frac C{n^2}.
\end{equation}
Similarly we get
\begin{equation}\notag
 \E|\varepsilon_{j3}|^4\le C\mu_4n^{-2}\E(\frac1n\sum_{l\in\mathbb T_j}|R_{ll}^{(j)})^2+C\mu_4n^{-2}\frac1n\sum_{l\in\mathbb T_j}\E|R^{(j)}_{ll}|^4\le Cn^{-2}.
\end{equation}
Finally, by Lemma \ref{lem2}, we have
\begin{equation}\notag
 \E|\varepsilon_{j4}|^4\le Cn^{-4}.
\end{equation}
Combining these inequalities we get the claim.
Thus Lemma \ref{bigv} is proved.
\end{proof}

\subsection{Some Auxiliary Bounds for Resolvent Matrices for $z\in\mathbb G$}Introduce now the region
\begin{align}\label{region}
 \mathbb G &:=\{z=u+iv\in\mathbb C^+:\,u\in\mathbb J_{\varepsilon}, v\ge v_0/\sqrt{\gamma}\},\;\; \text{where} \; v_0= A_0 n^{-1},\\
  \mathbb J_{\varepsilon} &=[-2+\varepsilon,2-\varepsilon], \quad \varepsilon:=c_1n^{-\frac23}, \quad \gamma=\gamma(u)=
  \min\{2-u, 2+u\}.\notag
\end{align}
In the next lemma we some simple inequalities for the region $\mathbb G$

\begin{lem}\label{lemG}
 For any $z\in\mathbb G$ we have
 \begin{align}
  |z^2-4|\ge 2\max\{\gamma,v\},\qquad
  {nv}\sqrt{|z^2-4|}\ge 2A_0.
 \end{align}

\end{lem}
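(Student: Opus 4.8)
The plan is to reduce both assertions to the elementary factorization $z^2-4=(z-2)(z+2)$ combined with the shape of the region $\mathbb G$.

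First I would write $|z^2-4|=|z-2|\,|z+2|$. For $z=u+iv$ with $u\in\mathbb J_{\varepsilon}\subset[-2,2]$ one has $|z-2|^2=(2-u)^2+v^2$ and $|z+2|^2=(2+u)^2+v^2$. The key observation is that, since $\gamma=\gamma(u)=\min\{2-u,\,2+u\}$ while $(2-u)+(2+u)=4$, one of the two numbers $2-u,\ 2+u$ equals $\gamma$ and the other equals $4-\gamma$, and $4-\gamma\ge 2$ because $\gamma\le 2$. Hence
\begin{equation}\notag
 |z^2-4|^2=(\gamma^2+v^2)\big((4-\gamma)^2+v^2\big)\ge 4\,(\gamma^2+v^2),
\end{equation}
so that $|z^2-4|\ge 2\sqrt{\gamma^2+v^2}\ge 2\max\{\gamma,v\}$, which is the first inequality.

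For the second inequality I would take square roots in the bound just obtained, getting $\sqrt{|z^2-4|}\ge\sqrt2\,(\gamma^2+v^2)^{1/4}\ge\sqrt2\,\sqrt{\gamma}$ (using $\gamma^2+v^2\ge\gamma^2$), and then invoke the defining constraint $v\ge v_0/\sqrt{\gamma}$ of $\mathbb G$, i.e. $v\sqrt{\gamma}\ge v_0=A_0n^{-1}\log^4 n$ by \eqref{v0}. Multiplying these,
\begin{equation}\notag
 nv\sqrt{|z^2-4|}\ge\sqrt2\;nv\sqrt{\gamma}\ge\sqrt2\;nv_0=\sqrt2\,A_0\log^4 n\ge 2A_0,
\end{equation}
the last step being valid since $\log^4 n\ge\sqrt2$ for all $n\ge 3$.

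I do not expect a real obstacle here: the statement is elementary once the factorization and the identity $(2-u)+(2+u)=4$ are in hand. The only place calling for a little care is the constant in the second bound — a naive estimate produces only the factor $\sqrt2$ in front of $A_0$, and the extra room needed to reach $2A_0$ is exactly what the logarithmic factor in $v_0$ provides.
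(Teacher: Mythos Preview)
Your argument is correct and is exactly the paper's approach---factor $z^2-4=(z-2)(z+2)$ and use that one factor has modulus $\sqrt{\gamma^2+v^2}$ while the other is at least $2$---except that you actually supply the details for the second inequality, which the paper's one-line proof (``$|z^2-4|\ge 2\sqrt{\gamma^2+v^2}$; this inequality proves the Lemma'') omits. Your observation about the constant (that the bare factorization yields only $\sqrt2\,A_0$, and the $\log^4 n$ factor in $v_0$ from \eqref{v0} is what promotes this to $2A_0$) is correct and more careful than the paper itself; note, incidentally, that the Appendix locally rewrites $v_0=A_0n^{-1}$ in \eqref{region}, under which the stated constant $2$ would not follow, so your appeal to the main-text definition \eqref{v0} is the right reading.
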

\begin{proof}
 We observe that
 \begin{equation}
  |z^2-4|=|z-2||z+2|\ge 2\sqrt{\gamma^2+v^2}.
 \end{equation}
This inequality proves the Lemma.
\end{proof}

\begin{lem}\label{lem00}Assuming the conditions  of Theorem \ref{main}, there exists an absolute constant $c_0>0$ such that for any $\mathbb J\subset \mathbb T$,
\begin{equation}\label{lem00.1}
 |z+m_n^{(\mathbb J)}(z)+s(z)|\ge \im m_n^{(\mathbb J)}(z),
\end{equation}
moreover, for $z\in\mathbb G$,
\begin{equation}\label{lem00.2}
 |z+m_n^{(\mathbb J)}(z)+s(z)|\ge c_0\sqrt{|z^2-4|}.
\end{equation}

\end{lem}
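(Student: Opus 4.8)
The plan is to obtain both inequalities simply by taking imaginary parts, using only the elementary fact that $m_n^{(\mathbb J)}(z)$ and $s(z)$ are Stieltjes transforms of positive measures and therefore map $\mathbb C_+$ into itself. Write $z=u+iv$ with $v>0$. Since $\im m_n^{(\mathbb J)}(z)=\frac1n\sum_{l\in\mathbb T_{\mathbb J}}\frac{v}{|\lambda^{(\mathbb J)}_l-z|^2}>0$ and likewise $\im s(z)>0$, we have
\begin{equation}\notag
 |z+m_n^{(\mathbb J)}(z)+s(z)|\ge\im\bigl(z+m_n^{(\mathbb J)}(z)+s(z)\bigr)=v+\im m_n^{(\mathbb J)}(z)+\im s(z).
\end{equation}
Discarding the nonnegative quantity $v+\im s(z)$ on the right gives \eqref{lem00.1}; discarding instead $\im m_n^{(\mathbb J)}(z)$ gives the auxiliary bound $|z+m_n^{(\mathbb J)}(z)+s(z)|\ge v+\im s(z)$, which I would use as the starting point for \eqref{lem00.2}.

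For \eqref{lem00.2} the idea is to set $\zeta:=z+2s(z)$. From the semicircle equation \eqref{stsemi} one has $zs(z)+s^2(z)=-1$, hence $\zeta^2=z^2+4zs(z)+4s^2(z)=z^2-4$, so that $|\zeta|=\sqrt{|z^2-4|}$. Moreover $\im\zeta=v+2\im s(z)>0$, and since $z\in\mathbb G$ forces $|u|\le2$,
\begin{equation}\notag
 (\re\zeta)^2-(\im\zeta)^2=\re(\zeta^2)=\re(z^2-4)=u^2-v^2-4\le0,
\end{equation}
so $(\re\zeta)^2\le(\im\zeta)^2$ and therefore $|\zeta|^2\le2(\im\zeta)^2$, i.e. $\im\zeta\ge\frac1{\sqrt2}\sqrt{|z^2-4|}$. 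Combining this with $v+\im s(z)=\frac12v+\frac12\im\zeta\ge\frac12\im\zeta$ yields
\begin{equation}\notag
 |z+m_n^{(\mathbb J)}(z)+s(z)|\ge v+\im s(z)\ge\frac1{2\sqrt2}\sqrt{|z^2-4|},
\end{equation}
which is \eqref{lem00.2} with $c_0=\frac1{2\sqrt2}$.

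There is no serious obstacle here: the argument rests only on the positivity of $\im m_n^{(\mathbb J)}(z)$ and $\im s(z)$ on $\mathbb C_+$ (immediate from their spectral/integral representations), on the one-line identity $\zeta^2=z^2-4$, and on the sign of $\re(z^2-4)=u^2-v^2-4$, which is precisely where the constraint $u\in\mathbb J_{\varepsilon}$ (hence $|u|\le2$) built into the definition of $\mathbb G$ is used. It is worth noting that the computation in fact establishes \eqref{lem00.2} for every $z=u+iv$ with $|u|\le2$ and $v>0$, uniformly over all $\mathbb J\subset\mathbb T$, and that $c_0$ is an absolute constant as claimed.
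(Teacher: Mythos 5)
Your proof is correct and takes essentially the same route as the paper: bound the modulus from below by its imaginary part, use $\im m_n^{(\mathbb J)}(z)>0$ and $\im s(z)>0$, and reduce \eqref{lem00.2} to the bound $\im\sqrt{z^2-4}\ge\frac{1}{\sqrt 2}\sqrt{|z^2-4|}$, which the paper merely calls ``simple to check'' and which your computation with $\zeta=z+2s(z)$ (so $\zeta^2=z^2-4$ and $\re(z^2-4)=u^2-v^2-4\le 0$ for $|u|\le 2$) spells out explicitly, yielding the same conclusion with $c_0=\frac{1}{2\sqrt2}$.
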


\begin{proof}
 First we note
 \begin{equation}
 |z+m_n^{(\mathbb J)}(z)+s(z)|\ge\im(z+s(z))\ge \frac12\im\sqrt{z^2-4}.
 \end{equation}
Furthermore, it is simple to check that, for $z=u+iv$ with $v>0$
\begin{equation}
 \im\sqrt{z^2-4}\ge \frac{\sqrt2}2\sqrt{|z^2-4|}.
\end{equation}
Thus Lemma \ref{lem00} is proved.
\end{proof}



\begin{lem}\label{basic1}Assuming the conditions of Theorem \ref{main}, there exists an absolute constant $C>0$ such that for any $j=1,\ldots,n$,
\begin{equation}
 \E\{|\varepsilon_{j1}|^4\big|\mathfrak M^{(j)}\}\le \frac {C\mu_4}{n^2}.
\end{equation}

\end{lem}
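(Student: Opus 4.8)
The plan is to unwind the definition of $\varepsilon_{j1}$ and exploit independence. Recall from the definition following \eqref{repr001*} (and restated in \eqref{repr01}) that $\varepsilon_{j1} = \frac{1}{\sqrt n} X_{jj}$, so that $|\varepsilon_{j1}|^4 = n^{-2}|X_{jj}|^4$. Pulling the deterministic factor $n^{-2}$ out of the conditional expectation, it remains only to bound $\E\{|X_{jj}|^4 \,\big|\, \mathfrak M^{(j)}\}$.

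The key observation is that $\mathfrak M^{(j)}$ is, by definition, the $\sigma$-algebra generated by the entries $X_{lk}$ with $l,k \in \mathbb T_j = \mathbb T \setminus \{j\}$, and therefore does not involve $X_{jj}$. Since the family $\{X_{lk}\}_{1 \le l \le k \le n}$ is independent, the random variable $X_{jj}$ is independent of $\mathfrak M^{(j)}$, so the conditional expectation collapses to the unconditional one:
\begin{equation}\notag
 \E\{|X_{jj}|^4 \,\big|\, \mathfrak M^{(j)}\} = \E|X_{jj}|^4.
\end{equation}
By the moment hypothesis \eqref{moment} this is at most $\mu_4$, whence $\E\{|\varepsilon_{j1}|^4 \,\big|\, \mathfrak M^{(j)}\} \le \mu_4 n^{-2}$, which is the claim with absolute constant $C = 1$.

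There is no real obstacle here; the only point requiring care is to record explicitly that $\mathfrak M^{(j)}$ excludes the diagonal entry $X_{jj}$ as well as the off-diagonal entries in row and column $j$, so that the independence used is legitimate. The same bookkeeping will be reused for the conditional bounds on $\varepsilon_{j2}, \varepsilon_{j3}$ in the lemmas that follow, where one conditions on $\mathfrak M^{(j)}$ and treats $(X_{jk})_{k \in \mathbb T_j}$ as the only remaining source of randomness.
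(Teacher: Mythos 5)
Your argument is correct and is exactly the paper's (the paper simply says the result follows immediately from the definition of $\varepsilon_{j1}$): write $|\varepsilon_{j1}|^4=n^{-2}|X_{jj}|^4$, use independence of $X_{jj}$ from $\mathfrak M^{(j)}$ to drop the conditioning, and bound $\E|X_{jj}|^4$ by $\mu_4$ via \eqref{moment}. You merely spell out the independence bookkeeping that the paper leaves implicit, which is fine.
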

\begin{proof}
 The result follows immediately from the definition of $\varepsilon_{j1}$.
\end{proof}
\begin{lem}\label{basic2}Assuming the conditions of Theorem \ref{main}, there exists an absolute constant $C>0$ such that for any $j=1,\ldots,n$, 
 \begin{equation}\label{basic3}
  \E\{|\varepsilon_{j2}|^2\big|\mathfrak M^{(j)}\}\le \frac C{nv}\im m_n^{(j)}(z),
 \end{equation}
and
\begin{equation}\label{basic4}
 \E\{|\varepsilon_{j2}|^4\big|\mathfrak M^{(j)}\}\le \frac {C\mu_4^2}{n^2v^2}\im^2m_n^{(j)}(z).
\end{equation}

\end{lem}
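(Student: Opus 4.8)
The plan is to condition on $\mathfrak M^{(j)}$ and to regard $\varepsilon_{j2}=-\frac1n\sum_{k\ne l\in\mathbb T_j}X_{jk}X_{jl}R^{(j)}_{kl}$ as a quadratic form in the variables $(X_{jk})_{k\in\mathbb T_j}$, which are independent of $\mathfrak M^{(j)}$, centered, of unit variance, with $\E|X_{jk}|^4\le\mu_4$, and whose coefficients $-\frac1nR^{(j)}_{kl}$ are symmetric and $\mathfrak M^{(j)}$-measurable. Then \eqref{basic3} follows at once: expanding the conditional second moment, only the index pairings $(k,l)$ and $(l,k)$ survive, and since $R^{(j)}_{lk}=R^{(j)}_{kl}$ one gets $\E\{|\varepsilon_{j2}|^2\big|\mathfrak M^{(j)}\}=\frac2{n^2}\sum_{k\ne l\in\mathbb T_j}|R^{(j)}_{kl}|^2\le\frac2{n^2}\sum_{k,l\in\mathbb T_j}|R^{(j)}_{kl}|^2$, and \eqref{res1} of Lemma \ref{resol00} (with $\mathbb J=\{j\}$) bounds the right-hand side by $\frac2{nv}\im m_n^{(j)}(z)$.

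For \eqref{basic4} I would apply Burkholder's inequality for quadratic forms (Lemma \ref{burkh1}) conditionally on $\mathfrak M^{(j)}$ with $p=4$. Writing $W_l:=\sum_{k\in\mathbb T_j,\,k<l}R^{(j)}_{kl}X_{jk}$, this gives
\[
\E\{|\varepsilon_{j2}|^4\big|\mathfrak M^{(j)}\}\le\frac{C}{n^4}\Big(\E\{\big(\sum_{l}|W_l|^2\big)^2\big|\mathfrak M^{(j)}\}+\mu_4\sum_{l}\E\{|W_l|^4\big|\mathfrak M^{(j)}\}\Big).
\]
Rosenthal's inequality (Lemma \ref{Rosent}) bounds each $\E\{|W_l|^4\big|\mathfrak M^{(j)}\}$ by $C\mu_4\big(\sum_{k<l}|R^{(j)}_{kl}|^2\big)^2$, once the term $\mu_4\sum_{k<l}|R^{(j)}_{kl}|^4\le\mu_4\big(\sum_{k<l}|R^{(j)}_{kl}|^2\big)^2$ is absorbed into the leading one. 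For the first term I would write the nonnegative quadratic form $\sum_l|W_l|^2$ as $\sum_{k\in\mathbb T_j}c_kX_{jk}^2$, with $c_k:=\sum_{l>k}|R^{(j)}_{kl}|^2$, plus a centered quadratic form with coefficients $d_{kk'}:=\sum_{l>\max(k,k')}R^{(j)}_{kl}\overline{R^{(j)}_{k'l}}$; these two pieces are uncorrelated, so the conditional second moment of $\sum_l|W_l|^2$ is at most $\big(\sum_kc_k\big)^2+\mu_4\sum_kc_k^2+2\sum_{k\ne k'}|d_{kk'}|^2$, and Cauchy--Schwarz gives $|d_{kk'}|^2\le\big(\sum_{l}|R^{(j)}_{kl}|^2\big)\big(\sum_{l}|R^{(j)}_{k'l}|^2\big)$ while trivially $c_k\le\sum_{l}|R^{(j)}_{kl}|^2$.

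At this stage everything is expressed through $\sum_{l}|R^{(j)}_{kl}|^2$ and $\sum_{k,l}|R^{(j)}_{kl}|^2$, and the one genuine obstacle is to estimate these \emph{sharply}, so that the final bound carries the factor $\im^2m_n^{(j)}(z)$ and not the weaker $v^{-1}\im m_n^{(j)}(z)$ produced by the crude estimate $|R^{(j)}_{kl}|\le v^{-1}$. From \eqref{res1}, $\sum_{k,l\in\mathbb T_j}|R^{(j)}_{kl}|^2\le nv^{-1}\im m_n^{(j)}(z)$, so $\big(\sum_kc_k\big)^2$ and $\sum_{k\ne k'}|d_{kk'}|^2$ are both $\le\big(\sum_{k,l}|R^{(j)}_{kl}|^2\big)^2\le n^2v^{-2}\im^2m_n^{(j)}(z)$; and from \eqref{res2}, $\sum_{l}|R^{(j)}_{kl}|^2\le v^{-1}\im R^{(j)}_{kk}$, whence, using the elementary convexity inequality $\sum_ka_k^2\le\big(\sum_ka_k\big)^2$ for $a_k=\im R^{(j)}_{kk}\ge0$ (and \emph{not} the lossy bound $\sum_ka_k^2\le v^{-1}\sum_ka_k$), one gets $\sum_kc_k^2\le v^{-2}\sum_k(\im R^{(j)}_{kk})^2\le v^{-2}\big(\sum_k\im R^{(j)}_{kk}\big)^2=n^2v^{-2}\im^2m_n^{(j)}(z)$ and, likewise, $\sum_l\big(\sum_{k}|R^{(j)}_{kl}|^2\big)^2\le v^{-2}\sum_l(\im R^{(j)}_{ll})^2\le n^2v^{-2}\im^2m_n^{(j)}(z)$. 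Feeding these four estimates back into the displayed inequality yields $\E\{|\varepsilon_{j2}|^4\big|\mathfrak M^{(j)}\}\le\frac{C}{n^4}\cdot C\mu_4^2\,n^2v^{-2}\im^2m_n^{(j)}(z)=\frac{C\mu_4^2}{n^2v^2}\im^2m_n^{(j)}(z)$, which is \eqref{basic4}.
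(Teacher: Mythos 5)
Your proposal is correct, and the second‐moment part coincides with the paper's argument: condition on $\mathfrak M^{(j)}$, expand the quadratic form, and apply inequality \eqref{res1} of Lemma \ref{resol00}. For the fourth moment the paper simply performs the direct conditional expansion of $\E\{|\varepsilon_{j2}|^4\mid\mathfrak M^{(j)}\}$ (stated as ``direct calculations''), which yields the two terms $\frac{C}{n^4}\bigl(\sum_{k\ne l}|R^{(j)}_{kl}|^2\bigr)^2$ and $\frac{C\mu_4^2}{n^4}\sum|R^{(j)}_{kl}|^4$, absorbs the second into the first, and concludes with \eqref{res1}; you instead route the estimate through the conditional Burkholder inequality for quadratic forms (Lemma \ref{burkh1}) combined with Rosenthal's inequality, and then handle the term $\E\{(\sum_l|W_l|^2)^2\mid\mathfrak M^{(j)}\}$ by splitting it into a diagonal part and a centered off-diagonal quadratic form. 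That splitting is in effect the same direct fourth-moment bookkeeping the paper performs, so the heavier machinery is not needed, but every step you take is valid: the uncorrelatedness of the two pieces, the bounds $|d_{kk'}|^2\le\bigl(\sum_l|R^{(j)}_{kl}|^2\bigr)\bigl(\sum_l|R^{(j)}_{k'l}|^2\bigr)$ and $c_k\le\sum_l|R^{(j)}_{kl}|^2$, and, crucially, the use of \eqref{res1} and \eqref{res2} together with $\sum_k(\im R^{(j)}_{kk})^2\le\bigl(\sum_k\im R^{(j)}_{kk}\bigr)^2$ to retain the factor $\im^2 m_n^{(j)}(z)$ instead of the lossy $v^{-1}\im m_n^{(j)}(z)$ — which is exactly the point the paper's proof also hinges on. (A cosmetic remark: Lemma \ref{burkh1} is stated for real coefficients, whereas $R^{(j)}_{kl}$ is complex; splitting into real and imaginary parts fixes this at the cost of an absolute constant, so it is not a gap.)
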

\begin{proof}
 Note that r.v.'s $X_{jl}$, for $l\in\mathbb T_j$ are independent of $\mathfrak M^{(j)}$ and that for $l,k\in\mathbb T_j$ $R^{(j)}_{lk}$
 are measurable with respect to $\mathfrak M^{(j)}$.
 This implies that $\varepsilon_{j2}$ is a quadratic form with coefficients $R^{(j)}_{lk}$ independent of $X_{jl}$. Thus
 its variance and fourth moment are easily available.
 \begin{equation}
  \E\{|\varepsilon_{j2}|^2\big|\mathfrak M^{(j)}\}=\frac1{n^2}\sum_{l\ne k\in\mathbb T_j}|R^{(j)}_{lk}|^2\le \frac1{n^2}\Tr\mathbb |\mathbf R^{(j)}|^2,
 \end{equation}
Here we use the notation $|\mathbf A|^2=\mathbf A\mathbf A^*$ for any matrix $\mathbf A$.
Applying Lemma \ref{resol00}, inequality \eqref{res1},
we get equality \eqref{basic3}.

Furthermore, direct calculations show that
\begin{align}
 \E\{|\varepsilon_{j2}|^4\big|\mathfrak M^{(j)}\}&\le \frac C{n^2}(\frac1n\sum_{l\ne k\in\mathbb T_j}|R^{(j)}_{lk}|^2)^2
 +\frac {C\mu_4^2}{n^2}\frac1{n^2}\sum_{l\in\mathbb T_{j}}|R^{(j)}_{lk}|^4\notag\\&\le \frac {C\mu_4^2}{n^2}(\frac1n\sum_{l\ne k\in\mathbb T_j}|R^{(j)}_{lk}|^2)^2
 \le\frac{C\mu_4^2}{n^2v^2}(\im m_n^{(j)}(z))^2.
\end{align}
Here again we used Lemma \ref{resol00}, inequality \eqref{res1}.
Thus Lemma \ref{basic2} is proved.
\end{proof}
\begin{lem}\label{basic5}Assuming the conditions of Theorem \ref{main}, there exists an absolute constant $C>0$ such that for any $j=1,\ldots,n$,
 \begin{align}\label{basic6}
  \E\{|\varepsilon_{j3}|^2\big|\mathfrak M^{(j)}\}\le \frac {C\mu_4}{n}\frac1n\sum_{l\in\mathbb T_j}|R^{(j)}_{ll}|^2,
 \end{align}
 and
 \begin{align}\label{basic7}
  \E\{|\varepsilon_{j3}|^4\big|\mathfrak M^{(j)}\}\le \frac {C}{n^2}(\frac1n\sum_{l\in\mathbb T_j}|R^{(j)}_{ll}|^2)^2
  +\frac {C\mu_4}{n^2}\frac1n\sum_{l\in\mathbb T_j}|R^{(j)}_{ll}|^4.
 \end{align}

\end{lem}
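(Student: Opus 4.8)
The plan is to condition on the $\sigma$-algebra $\mathfrak M^{(j)}$. Conditionally on $\mathfrak M^{(j)}$ the diagonal resolvent entries $R^{(j)}_{kk}$, $k\in\mathbb T_j$, are fixed (complex) numbers, while the variables $X_{jk}$, $k\in\mathbb T_j$, remain independent with $\E X_{jk}=0$ and $\E X_{jk}^2=1$; in particular the summands $(X_{jk}^2-1)R^{(j)}_{kk}$ of $\varepsilon_{j3}=-\frac1n\sum_{k\in\mathbb T_j}(X_{jk}^2-1)R^{(j)}_{kk}$ are conditionally independent and centered. First I would record $\E(X_{jk}^2-1)^2=\E X_{jk}^4-1\le\mu_4$, which gives \eqref{basic6} at once: by conditional independence and centering,
\[
 \E\{|\varepsilon_{j3}|^2\,|\,\mathfrak M^{(j)}\}=\frac1{n^2}\sum_{k\in\mathbb T_j}\E(X_{jk}^2-1)^2\,|R^{(j)}_{kk}|^2\le\frac{\mu_4}{n}\cdot\frac1n\sum_{k\in\mathbb T_j}|R^{(j)}_{kk}|^2 .
\]

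For \eqref{basic7} I would apply Rosenthal's inequality (Lemma \ref{Rosent}) conditionally, with exponent $q=4$, exactly as in the proof of Lemma \ref{eps3}: this bounds $\E\{|\varepsilon_{j3}|^4\,|\,\mathfrak M^{(j)}\}$ by a constant times $n^{-4}\big((\sum_{k\in\mathbb T_j}\E(X_{jk}^2-1)^2|R^{(j)}_{kk}|^2)^2+\sum_{k\in\mathbb T_j}\E(X_{jk}^2-1)^4|R^{(j)}_{kk}|^4\big)$. In the first term I bound $\E(X_{jk}^2-1)^2\le\mu_4$ as before; in the second term I use the truncation condition \eqref{trun}, namely $|X_{jk}^2-1|\le D^2\sqrt n+1\le Cn^{\frac12}$, to write $\E(X_{jk}^2-1)^4\le Cn\,\E(X_{jk}^2-1)^2\le Cn\mu_4$. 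Collecting the two contributions produces exactly the two terms on the right of \eqref{basic7}, with the constant absorbing the dependence on $\mu_4$.

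The only step that is not completely routine is this fourth-moment estimate for $X_{jk}^2-1$: the assumptions of Theorem \ref{main} provide only fourth moments of $X_{jk}$, so $\E(X_{jk}^2-1)^4$ — an eighth-moment quantity — is not controlled by the moment hypothesis alone, and it is precisely the truncation \eqref{trun} that supplies the correct power of $n$. Everything else mirrors the conditional-independence-plus-Rosenthal computations already carried out for $\varepsilon_{j2}$ in Lemma \ref{basic2} and for $\varepsilon_{j3}$ in Lemma \ref{eps3}; alternatively one may expand $\E\{|\varepsilon_{j3}|^4\,|\,\mathfrak M^{(j)}\}$ directly and note that, because $\E(X_{jk}^2-1)=0$, only the index patterns with all four indices equal or with two distinct indices each of multiplicity two survive, which gives the same two terms.
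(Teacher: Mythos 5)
Your proposal is correct and follows essentially the same route as the paper: the second-moment bound \eqref{basic6} is the immediate conditional variance computation, and \eqref{basic7} comes from conditional Rosenthal's inequality, with the eighth-moment term handled exactly as in the paper via the truncation \eqref{trun} (the paper writes this as $\mu_8\le Cn\mu_4$, which is your estimate $\E(X_{jk}^2-1)^4\le Cn\mu_4$). Your closing remark correctly identifies the one non-routine point, namely that the truncation, not the moment hypothesis, supplies the needed factor of $n$.
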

\begin{proof}The first inequality is obvious. To prove the second inequality, we apply
 Rosenthal's inequality. We obtain
\begin{align}
\E\{|\varepsilon_{j3}|^4\big|\mathfrak M^{(j)}\}\le \frac {C\mu_4}{n^2}(\frac1n\sum_{l\in\mathbb T_j}|R^{(j)}_{ll}|^2)^2
+\frac {C\mu_8}{n^3}\frac1n\sum_{l\in\mathbb T_j}|R^{(j)}_{ll}|^4. 
\end{align}
Using  $|X_{jl}|\le Cn^{\frac 14}$ we get 
$\mu_8\le Cn\mu_4$ and the claim.
Thus Lemma \ref{basic5} is proved.
\end{proof}
\begin{cor}\label{corgot}Assuming the conditions of Theorem \ref{main}, there exists an absolute constant $C>0$, depending on $\mu_4$ and $D$ only, 
such that for any $j=1,\ldots,n$,  $\nu=1,2,3$
$z\in\mathbb G$, and $1\le \alpha\le \frac12A_1(nv)^{\frac14}$
\begin{align}
 \E\frac{|\varepsilon_{j\nu}|^2}{|z+m_n^{(j)}(z)+s(z)||z+m_n^{(j)}(z)|^{\alpha}}\le \frac C{nv}
\end{align}
and 
 \begin{align}\label{corgot1}
 \E\frac{|\varepsilon_{j\nu}|^4}{|z+m_n^{(j)}(z)+s(z)|^2|z+m_n^{(j)}(z)|^{\alpha}}\le \frac C{n^2v^2}.
\end{align}
\end{cor}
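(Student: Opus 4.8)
The plan is to condition on the $\sigma$-algebra $\mathfrak M^{(j)}$ and to treat the three values $\nu=1,2,3$ separately, using that for $k\in\mathbb T$ the entries $X_{jk}$ are independent of $\mathfrak M^{(j)}$, whereas $m_n^{(j)}(z)$, the $R^{(j)}_{kl}$ and hence all three denominators are $\mathfrak M^{(j)}$-measurable. After conditioning, the inner expectations $\E\{|\varepsilon_{j\nu}|^2\mid\mathfrak M^{(j)}\}$ and $\E\{|\varepsilon_{j\nu}|^4\mid\mathfrak M^{(j)}\}$ are supplied by Lemmas \ref{basic1}, \ref{basic2}, \ref{basic5}: for $\nu=2$ they are bounded by $\frac C{nv}\im m_n^{(j)}(z)$ and $\frac C{n^2v^2}\im^2 m_n^{(j)}(z)$; for $\nu=3$ the bound $\frac{C\mu_4}{n}\cdot\frac1n\sum_l|R^{(j)}_{ll}|^2$ (resp.\ its fourth-moment analogue) is first converted, via inequality \eqref{res2} of Lemma \ref{resol00} taken at $k=l$ together with $|R^{(j)}_{ll}|\le v^{-1}$, into $\frac{C\mu_4}{nv}\im m_n^{(j)}(z)$ (resp.\ a similar expression with $v^{-3}\im m_n^{(j)}(z)$); and for $\nu=1$ the conditional moments are simply the deterministic constants $\frac1n$ and $\frac{\mu_4}{n^2}$ by Lemma \ref{basic1} (cf.\ Lemma \ref{eps1*}).

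In each case the task reduces to estimating an expression of the form $\E\big[a_j\,|z+m_n^{(j)}(z)+s(z)|^{-1}|z+m_n^{(j)}(z)|^{-\alpha}\big]$, or the same with the first denominator squared, where $a_j$ equals a deterministic constant times a power of $\im m_n^{(j)}(z)$. First I would cancel the $\im m_n^{(j)}(z)$ factors against the denominator $|z+m_n^{(j)}(z)+s(z)|$ using inequality \eqref{lem00.1} of Lemma \ref{lem00}, namely $|z+m_n^{(j)}(z)+s(z)|\ge\im m_n^{(j)}(z)$. Whatever factor of $|z+m_n^{(j)}(z)+s(z)|^{-1}$ is still present after this cancellation I would bound deterministically by $c_0^{-1}|z^2-4|^{-1/2}$ via \eqref{lem00.2}, and then recover the needed powers of $v$ from $\sqrt{|z^2-4|}\ge\sqrt{2v}$ (Lemma \ref{lemG}) and from the defining constraint $v\ge v_0/\sqrt{\gamma}$ of $\mathbb G$. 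What is left is a negative moment $\E|z+m_n^{(j)}(z)|^{-\beta}$ with $\beta\le\alpha+O(1)$, which I would control by Hölder's inequality together with the analogue of inequality \eqref{r12} of Corollary \ref{cor8} for the submatrix $\mathbf W^{(j)}$; this is legitimate precisely because $\alpha\le\frac12A_1(nv)^{\frac14}$ keeps the exponent inside the admissible range of that corollary, and because $A_0$ is chosen large enough that $nv$ is large throughout $\mathbb G$. Collecting the powers of $n$ and $v$ then yields $\frac C{nv}$ in the quadratic case and $\frac C{n^2v^2}$ in the quartic case.

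The hard part will be the bookkeeping near the edge of the spectrum: there $\gamma=\gamma(u)$, and hence $\sqrt{|z^2-4|}$, is as small as $\sqrt{\varepsilon}$, so one must never discard more powers of $\sqrt{|z^2-4|}$ than the constraint $v\ge v_0/\sqrt{\gamma}$ can pay for. This is exactly why the cancellation against $\im m_n^{(j)}(z)$ must be performed before resorting to \eqref{lem00.2} whenever an $\im m_n^{(j)}(z)$ factor is available (in particular for the fourth-moment bound in the case $\nu=3$, where the $v^{-3}$ losing most powers of $v$ must be matched carefully), and it is the reason the estimate is stated only on the $n$-dependent region $\mathbb G$. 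The second delicate point, as already indicated, is the uniform control of the negative moments of $z+m_n^{(j)}(z)$, which is what forces the exponent restriction $1\le\alpha\le\frac12A_1(nv)^{\frac14}$ and ties the admissible constants back to Corollary \ref{cor8}.
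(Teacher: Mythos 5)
Your strategy for $\nu=1$, for $\nu=2$, and for the second--moment bound at $\nu=3$ coincides with the paper's: condition on $\mathfrak M^{(j)}$, insert the conditional moment bounds of Lemmas \ref{basic1}, \ref{basic2}, \ref{basic5}, cancel the available factors of $\im m_n^{(j)}(z)$ against $|z+m_n^{(j)}(z)+s(z)|$ via \eqref{lem00.1}, bound what remains by \eqref{lem00.2} and Lemma \ref{lemG}, and control $\E|z+m_n^{(j)}(z)|^{-\alpha}$ through (the sub-matrix analogue of) Corollary \ref{cor8}. Those parts go through.

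The gap is in the fourth--moment bound for $\nu=3$, precisely at the term $\frac{C\mu_4}{n^2}\cdot\frac1n\sum_{l}|R^{(j)}_{ll}|^4$ of \eqref{basic7}. Your plan converts this, via \eqref{res2} and $|R^{(j)}_{ll}|\le v^{-1}$, into $\frac{C\mu_4}{n^2v^3}\im m_n^{(j)}(z)$; but this leaves only \emph{one} factor of $\im m_n^{(j)}(z)$ to cancel against the two factors of $|z+m_n^{(j)}(z)+s(z)|$, and the leftover factor can only be bounded by $c_0^{-1}|z^2-4|^{-\frac12}$, giving in total $\frac{C}{n^2v^3\sqrt{|z^2-4|}}$. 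On $\mathbb G$ one only has $nv\sqrt{|z^2-4|}\ge 2A_0$ (Lemma \ref{lemG}); with $v\asymp v_0/\sqrt{\gamma}$ the product $v\sqrt{|z^2-4|}$ is of order $n^{-1}\log^4 n$, so this expression exceeds the target $\frac{C}{n^2v^2}$ by a factor of order $n/\log^4 n$. You flag this as the delicate matching, but with the ingredients you list (cancellation via \eqref{lem00.1}, then \eqref{lem00.2}, $\sqrt{|z^2-4|}\ge\sqrt{2v}$ and $v\ge v_0/\sqrt{\gamma}$) the matching is in fact impossible: the bound $\frac1n\sum_l|R^{(j)}_{ll}|^4\le v^{-3}\im m_n^{(j)}(z)$ is simply too lossy here. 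The paper avoids it altogether: it keeps $\frac1n\sum_l|R^{(j)}_{ll}|^4$ (and $(\frac1n\sum_l|R^{(j)}_{ll}|^2)^2$) as they stand, bounds \emph{both} powers of $|z+m_n^{(j)}(z)+s(z)|^{-1}$ by $c_0^{-1}|z^2-4|^{-\frac12}$, separates the resolvent factor from $|z+m_n^{(j)}(z)|^{-\alpha}$ by the Cauchy--Schwartz inequality, and then uses the moment bounds $\E|R^{(j)}_{ll}|^p\le C_0^p$ of Corollary \ref{cor8} (the nontrivial input from \cite{GT:2014}, applied to the diagonal resolvent entries themselves, not only to $(z+m_n^{(j)}(z))^{-1}$) to see that this factor has expectation $O(1)$; the conclusion then follows from $|z^2-4|\ge 2v\ge cv^2$ for bounded $v$. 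You need this extra input, or something equivalent, to close the $\nu=3$ quartic case.
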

\begin{proof}
 For $\nu=1$, by Lemma \ref{lem00}, we have
 \begin{equation}
  \E\frac{|\varepsilon_{j\nu}|^2}{|z+m_n^{(j)}(z)+s(z)||z+m_n^{(j)}(z)|^{\alpha}}\le \frac1{n\sqrt{|z^2-4|}}\E|X_{jj}|^4\E\frac1{|z+m_n^{(j)}(z)|^{\alpha}}.
 \end{equation}
Applying now Corollary \ref{cor8}, we get the claim. The proof of the second inequality for $\nu=1$ is similar.
For $\nu=2$ we apply Lemma \ref{basic2}, inequality \eqref{basic3} and obtain, using that $\im m_n^{(j)}(z)\le |z+m_n^{(j)}(z)+s(z)|$, (see \eqref{lem00.1}),
\begin{align}
  \E\frac{|\varepsilon_{j2}|^2}{|z+m_n^{(j)}(z)+s(z)||z+m_n^{(j)}(z)|^{\alpha}}&\le 
  \frac1{nv}\E\frac{\im m_n^{(j)}(z)}{|z+m_n^{(j)}(z)+s(z)||z+m_n^{(j)}(z)|^{\alpha}}\notag\\&\le \frac C{nv}\E\frac{1}{|z+m_n^{(j)}(z)|^{\alpha}}.
 \end{align}
Similarly, using Lemma \ref{basic2}, inequality \eqref{basic4}, we get
 \begin{align}
  \E\frac{|\varepsilon_{j2}|^4}{|z+m_n^{(j)}(z)+s(z)|^2|z+m_n^{(j)}(z)|^{\alpha}}&\le 
  \frac C{n^2v^2}\E\frac{\im^2 m_n^{(j)}(z)}{|z+m_n^{(j)}(z)+s(z)|^2|z+m_n^{(j)}(z)|^{\alpha}}\notag\\&\le \frac C{n^2v^2}\E\frac{1}{|z+m_n^{(j)}(z)|^{\alpha}}.
 \end{align}
 Applying Corollary \ref{cor8}, we get the claim.
 For $\nu=3$, we apply Lemma \ref{basic5}, inequalities \eqref{basic6} and \eqref{basic7} and Lemma \ref{lem00}. We get
 \begin{align}
  \E&\frac{|\varepsilon_{j3}|^2}{|z+m_n^{(j)}(z)+s(z)||z+m_n^{(j)}(z)|^{\alpha}}\notag\\&\qquad\qquad\qquad\qquad\qquad\qquad\le 
  \frac C{n\sqrt{|z^2-4|}}\E\frac{1}{|z+m_n^{(j)}(z)|^{\alpha}}\Big(\frac1n\sum_{l\in\mathbb T_j}|R^{(j)}_{ll}|^2\Big),
 \end{align}
and
\begin{align}
  \E&\frac{|\varepsilon_{j3}|^4}{|z+m_n^{(j)}(z)+s(z)|^2|z+m_n^{(j)}(z)|^{\alpha}}\notag\\&\qquad\qquad\qquad\qquad\qquad\qquad\le 
  \frac C{n^2{|z^2-4|}}\E\frac{1}{|z+m_n^{(j)}(z)|^{\alpha}}\Big(\frac1n\sum_{l\in\mathbb T_j}|R^{(j)}_{ll}|^4\Big).
 \end{align}
 Using now the Cauchy -- Schwartz inequality and Corollary \ref{cor8}, we get the claim.
\end{proof}


\begin{lem}\label{basic8}Assuming the conditions of Theorem \ref{main}, there exists an absolute constant $C>0$ such that for any $j=1,\ldots,n$,
 \begin{align}
  |\varepsilon_{j4}|\le \frac C{nv}\quad\text{a.s.}
 \end{align}
 \end{lem}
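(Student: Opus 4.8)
Since $\varepsilon_{j4}=\frac1n\big(\Tr\mathbf R(z)-\Tr\mathbf R^{(j)}(z)\big)$, this quantity is deterministic once the matrix entries are fixed, so the plan is to produce a pathwise bound on $|\Tr\mathbf R-\Tr\mathbf R^{(j)}|$ by eigenvalue interlacing. Let $\lambda_1\le\cdots\le\lambda_n$ be the eigenvalues of $\mathbf W$ and $\mu_1\le\cdots\le\mu_{n-1}$ those of the principal submatrix $\mathbf W^{(j)}$ obtained by deleting the $j$th row and column. By Cauchy's interlacing theorem, $\lambda_i\le\mu_i\le\lambda_{i+1}$ for $1\le i\le n-1$; equivalently, setting $N_n(x)=\#\{i\le n:\lambda_i\le x\}$ and $N_{n-1}(x)=\#\{i\le n-1:\mu_i\le x\}$, one has $0\le N_n(x)-N_{n-1}(x)\le 1$ for every $x\in\R$.

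First I would write $\Tr\mathbf R(z)=\int_{\R}(x-z)^{-1}\,dN_n(x)$, likewise for $\mathbf R^{(j)}$, so that
\begin{equation}\notag
 \Tr\mathbf R(z)-\Tr\mathbf R^{(j)}(z)=\int_{\R}\frac{1}{x-z}\,d\big(N_n(x)-N_{n-1}(x)\big),
\end{equation}
and then integrate by parts. Because $(x-z)^{-1}=O(|x|^{-1})$ as $|x|\to\infty$ while $N_n-N_{n-1}$ is bounded, the boundary contributions vanish and
\begin{equation}\notag
 \Tr\mathbf R(z)-\Tr\mathbf R^{(j)}(z)=\int_{\R}\frac{N_n(x)-N_{n-1}(x)}{(x-z)^2}\,dx .
\end{equation}
Using $0\le N_n-N_{n-1}\le1$ and $\int_{\R}|x-z|^{-2}\,dx=\int_{\R}\big((x-u)^2+v^2\big)^{-1}\,dx=\pi/v$, this gives $|\Tr\mathbf R-\Tr\mathbf R^{(j)}|\le\pi/v$, hence $|\varepsilon_{j4}|\le\pi/(nv)$, which is the asserted bound with $C=\pi$.

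If one wants the sharper constant $C=1$ recorded in Lemma \ref{lem2}, I would instead argue via the logarithmic derivative: the Schur complement formula gives $\det(\mathbf W-z\mathbf I_n)/\det(\mathbf W^{(j)}-z\mathbf I_{n-1})=1/R_{jj}(z)$, and since $\tfrac{d}{dz}\log\det(\mathbf W-z\mathbf I_n)=-\Tr\mathbf R(z)$ we get $\Tr\mathbf R-\Tr\mathbf R^{(j)}=\tfrac{d}{dz}\log R_{jj}(z)=R_{jj}'(z)/R_{jj}(z)$. Now $R_{jj}(z)$ is the Stieltjes transform of a probability measure $\rho_j$ (with atoms $|u_{jk}|^2$ at $\lambda_k$, where $(u_{jk})_k$ is the $j$th row of the eigenvector matrix), so by Cauchy--Schwarz $|R_{jj}'(z)|\le\int|x-z|^{-2}\rho_j(dx)=v^{-1}\im R_{jj}(z)$, whereas $|R_{jj}(z)|\ge\im R_{jj}(z)>0$; dividing yields $|R_{jj}'/R_{jj}|\le v^{-1}$ and therefore $|\varepsilon_{j4}|\le 1/(nv)$.

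I do not expect a real obstacle here: the only substantive input is Cauchy interlacing for a one-row/column principal deletion, and everything else is elementary. The single place deserving a sentence of care is the vanishing of the boundary term in the integration by parts (equivalently, that the unbounded step function $N_n-N_{n-1}$ may be traded for the decaying kernel $(x-z)^{-2}$), which is immediate from $|N_n-N_{n-1}|\le1$ together with the decay of $(x-z)^{-1}$ at infinity.
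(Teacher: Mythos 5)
Your proposal is correct. Your second argument is in substance the paper's own proof: the paper simply records the Schur--complement identity $\Tr \mathbf R-\Tr\mathbf R^{(j)}=R^{-1}_{jj}\frac{dR_{jj}}{dz}$ (equation \eqref{shur}) and refers to \cite{GT:2003}, Lemma 3.3, where the bound $\bigl|\frac{d}{dz}\log R_{jj}\bigr|\le v^{-1}$ is obtained exactly as you do, from the spectral representation $R_{jj}(z)=\sum_k |u_{jk}|^2(\lambda_k-z)^{-1}$ together with $|R_{jj}'|\le v^{-1}\im R_{jj}\le v^{-1}|R_{jj}|$ (note this step is the triangle inequality for the integral, not Cauchy--Schwarz, a harmless slip). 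Your first argument, via Cauchy interlacing and integration by parts of $\int (x-z)^{-1}\,d\bigl(N_n-N_{n-1}\bigr)$, is a genuinely different and even more elementary route: it needs no resolvent identity and no eigenvector information, only $0\le N_n-N_{n-1}\le 1$ and $\int_{\mathbb R}|x-z|^{-2}dx=\pi/v$, and it is correct as written (the boundary terms indeed vanish since $N_n-N_{n-1}$ is bounded and $(x-z)^{-1}\to0$). The only price is the constant $\pi$ in place of $1$, which is irrelevant for the lemma as stated; the paper's route gives the sharper constant and, more importantly for later use (e.g.\ \eqref{important2} and the estimation of $\mathfrak T_1$), the explicit algebraic form of $\varepsilon_{j4}$ in terms of $R_{jj}$ and the quadratic form in $[(\mathbf R^{(j)})^2]_{kl}$, which the interlacing argument does not provide.
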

\begin{proof}This inequality follows from 
\begin{equation}\label{shur}
 \Tr \mathbf R-\Tr\mathbf R^{(j)}=(1+\frac1n\sum_{l,k\in\mathbb T_j}X_{jl}X_{jk}[(R^{(j)})^2]_{kl})R_{jj}=R^{-1}_{jj}\frac {dR_{jj}}{dz},
\end{equation}
which may be obtained using the Schur complement formula.
For details see, for instance \cite{GT:2003}, Lemma  3.3.
 
\end{proof}
\begin{lem}\label{lam1}Assuming the conditions of Theorem \ref{main}, we have, for  $z\in\mathbb G$,
\begin{align}
\E|\Lambda_n|^2\le \frac C{nv|z^2-4|^{\frac12}}.
\end{align}
\end{lem}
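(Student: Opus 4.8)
The plan is to read the bound off the self-consistency identity \eqref{lambda'}, $\Lambda_n=T_n/(z+m_n(z)+s(z))$ with $T_n=\frac1n\sum_{j=1}^n\varepsilon_jR_{jj}$, $\varepsilon_j=\sum_{\nu=1}^4\varepsilon_{j\nu}$. Three facts are used repeatedly: by Lemma \ref{lem00}, $|z+m_n(z)+s(z)|\ge\im m_n(z)$ and $|z+m_n(z)+s(z)|\ge c_0\sqrt{|z^2-4|}$ on $\mathbb G$, hence $|z+m_n+s|^2\ge c_0\sqrt{|z^2-4|}\,\im m_n$; by Lemma \ref{lemG}, $nv\sqrt{|z^2-4|}\ge 2A_0$, so any term of order $(nv)^{-2}$ or $(nv)^{-2}|z^2-4|^{-1}$ is automatically $\le C\,(nv\sqrt{|z^2-4|})^{-1}$, with a constant that shrinks as $A_0$ grows; and the a priori estimate $\E\,\im m_n^{(j)}(z)\le\im s(z)+|\E m_n^{(j)}(z)-s(z)|\le C\sqrt{|z^2-4|}$ on $\mathbb G$, which combines the trivial bound $\im s(z)\le\frac12\sqrt{|z^2-4|}$ with the bias estimate $|\E m_n^{(j)}(z)-s(z)|\le C(nv)^{-1}$ from \cite{GT:2014} and Lemma \ref{basic8}.

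First I would isolate $\nu=4$. By \eqref{3.9} and the resolvent identities of Lemma \ref{resol00} one has $|T_{n4}/(z+m_n(z)+s(z))|\le(nv)^{-1}$ deterministically; inserting $|z+m_n+s|^2\ge c_0\sqrt{|z^2-4|}\,\im m_n$ and $\E\,\im m_n\le C\sqrt{|z^2-4|}$ then gives $\E\,|T_{n4}/(z+m_n+s)|^2\le C(nv)^{-2}$, which is absorbable. For $\nu=1,2,3$ the key point is $\E\{\varepsilon_{j\nu}\mid\mathfrak M^{(j)}\}=0$, so in $\E[\overline{\Lambda_n}\,T_{n\nu}/(z+m_n+s)]$ I would replace the non-$\mathfrak M^{(j)}$-measurable factor $R_{jj}\overline{\Lambda_n}/(z+m_n+s)$ by an $\mathfrak M^{(j)}$-measurable surrogate built from $-1/(z+m_n^{(j)})$, $\overline{\widetilde{\Lambda}_n^{(j)}}$ and $1/(z+m_n^{(j)}+s)$, exactly as in the decomposition $M_1+M_2+M_3+M_4$ of Section \ref{expect} specialised to $p=2$ (where $\varphi(\Lambda_n)=\overline{\Lambda_n}$ and $|\varphi'|\le 2$). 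The surrogate term $M_1$ vanishes by conditional independence, and each of $M_2,M_3,M_4$ carries, besides $\varepsilon_{j\nu}$, at least one further small factor ($\varepsilon_{j\mu}$, $\varepsilon_{j4}$, or $\delta_j$), so that Cauchy--Schwarz together with the conditional moment bounds of Lemmas \ref{basic1}, \ref{basic2}, \ref{basic5}, \ref{basic8}, the estimates of Lemma \ref{resol00} and Corollary \ref{cor8}, and the lower bounds above controls them. The gain of $\sqrt{|z^2-4|}$ enters here: the conditional bounds $\E\{|\varepsilon_{j2}|^2\mid\mathfrak M^{(j)}\},\E\{|\varepsilon_{j3}|^2\mid\mathfrak M^{(j)}\}\le\frac{C}{nv}\im m_n^{(j)}$ together with the a priori estimate give $\E|\varepsilon_{j\nu}|^2\le C\sqrt{|z^2-4|}/(nv)$ for $\nu=2,3$ (while $\E|\varepsilon_{j1}|^2=1/n\le C\sqrt{|z^2-4|}/(nv)$), and one copy of $\im m_n^{(j)}$ entering this way cancels the $\im m_n$ produced by $|z+m_n+s|^{-2}$.

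Collecting all contributions and using Lemma \ref{lemG} to absorb the terms of order $(nv)^{-2}$ and $(nv)^{-2}|z^2-4|^{-1}$, one arrives at an inequality of the shape
\begin{equation}\notag
\E|\Lambda_n|^2\le\frac{C}{nv\sqrt{|z^2-4|}}+\frac{C}{nv}\,\bigl(\E|\Lambda_n|^2\bigr)^{1/2},
\end{equation}
to which Lemma \ref{frak} with $t=2$, $r=1$ applies (using Lemma \ref{lemG} once more on the square of the second constant), yielding the assertion.

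The step I expect to be the main obstacle is the $\nu=1,2,3$ analysis, and more precisely retaining the power $|z^2-4|^{-1/2}$ rather than the weaker $|z^2-4|^{-1}$. A careless Cauchy--Schwarz that bounds $\frac1n\sum_j|R_{jj}|^2$ by $v^{-1}\im m_n$ introduces a spurious factor $v^{-1}$ and produces only $\E|\Lambda_n|^2\le C\,(nv^2\sqrt{|z^2-4|})^{-1}$, which fails near the corner $\gamma\asymp\varepsilon$ of $\mathbb G$, where $\sqrt{|z^2-4|}\gg v$. The correct bookkeeping keeps $\frac1n\sum_j|R_{jj}|^2$ bounded in expectation through Corollary \ref{cor8}, spends one $\im m_n^{(j)}$ coming from a conditional second moment of $\varepsilon_{j\nu}$ to cancel the $\im m_n$ from $|z+m_n+s|^{-2}$, and keeps $\E\,\im m_n^{(j)}\le C\sqrt{|z^2-4|}$ for the residual factor. (Alternatively, the lemma follows at once from the bound $\E|m_n(z)-s(z)|^2\le C(nv)^{-2}$ on $\mathbb G$ --- proved in \cite{GT:2014} --- combined with Lemma \ref{lemG}.)
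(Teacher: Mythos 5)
Your argument can be made to work, but it is a genuinely different and much heavier route than the paper's. The paper does not use the conditional centering $\E\{\varepsilon_{j\nu}\mid\mathfrak M^{(j)}\}=0$ at all, and there is no $M_1+\dots+M_4$ decomposition in this proof: it simply writes $\E|\Lambda_n|^2=\sum_{\nu=1}^4\E\frac{T_{n\nu}}{z+m_n(z)+s(z)}\overline\Lambda_n$ and applies Cauchy--Schwarz so that the factor $\overline\Lambda_n$ is absorbed into $\E^{\frac12}|\Lambda_n|^2$ on the left (inequality \eqref{ll2}); no self-consistent recursion with a term $\frac{C}{nv}\bigl(\E|\Lambda_n|^2\bigr)^{1/2}$ and no appeal to Lemma \ref{frak} is needed. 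For $\nu=4$ the treatment coincides with yours. For $\nu=1,2,3$ the paper bounds $|T_{n\nu}|^2\le\frac1n\sum_j|\varepsilon_{j\nu}|^2|R_{jj}|^2$, replaces $|z+m_n+s|^{-1}$ by $C|z+m_n^{(j)}+s|^{-1}$ via \eqref{lar1}, spends one such factor on $|z^2-4|^{-1/2}$ (Lemma \ref{lem00}), and then invokes Corollary \ref{corgot} (inequality \eqref{corgot1}) together with $\E|R_{jj}|^4\le C$: the point of Corollary \ref{corgot} is exactly the cancellation you describe, namely the $\im m_n^{(j)}$ factors produced by the conditional moments of $\varepsilon_{j\nu}$ are absorbed by the remaining power of $|z+m_n^{(j)}+s|\ge\im m_n^{(j)}$, so the paper never needs your a priori bound $\E\,\im m_n^{(j)}\le C\sqrt{|z^2-4|}$ nor the bias estimate from \cite{GT:2014}. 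What your route buys is nothing extra here: the target $C/(nv\sqrt{|z^2-4|})$ is weak enough that the crude Cauchy--Schwarz suffices, and the centering machinery only pays off for the sharper $(nv)^{-p}$ bounds of Theorem \ref{stiltjesmain}.

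Two caveats on your version. First, if you really run the Section \ref{expect} decomposition at $p=2$, you must avoid the estimates there that themselves quote Lemma \ref{lam1} (the bound for $G_1$ does); you sidestep this only by importing $|\E m_n^{(j)}(z)-s(z)|\le C(nv)^{-1}$ from \cite{GT:2014}, an external dependency the paper's proof does not need, and you should check that that bound is available on all of $\mathbb G$. Second, your parenthetical shortcut is not sound as stated: the fluctuation bound $\E|m_n(z)-s(z)|^2\le C(nv)^{-2}$ on $\mathbb G$ is not what \cite{GT:2014} proves (that paper controls the expected Stieltjes transform, i.e.\ $|\E m_n-s|$); within the present paper this moment bound is precisely Theorem \ref{stiltjesmain}, whose proof uses Lemma \ref{lam1}, so deriving the lemma from it would be circular.
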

\begin{proof}We write
\begin{align}
 \E|\Lambda_n|^2=\E\Lambda_n\overline \Lambda_n=\E\frac{T_n}{z+m_n(z)+s(z)}\overline\Lambda_n=\sum_{\nu=1}^4\E\frac{T_{n\nu}}{z+m_n(z)+s(z)}\overline\Lambda_n,
\end{align}
where
\begin{align}
 T_{n\nu}:=\frac1n\sum_{j=1}^n\varepsilon_{j\nu}R_{jj}, \text{ for }\nu=1,\ldots,4.
\end{align}
Applying the Cauchy -- Schwartz inequality , we get
\begin{equation}\label{ll2}
 \E^{\frac12}|\Lambda_n|^2\le \sum_{\nu=1}^4\E^{\frac12}\frac{|T_{n\nu}|^2}{|z+m_n(z)+s(z)|^2}.
\end{equation}

First we observe that by \eqref{shur}
\begin{align}
 |T_{n4}|=\frac1n|m_n'(z)|\le \frac1{nv}\im m_n(z).
\end{align}
 Hence $|z+m_n(z)+s(z)|\ge \im m_n(z)$ and Jensen's inequality yields
\begin{equation}\label{tn4}
 \E\frac{|T_{n4}|^2}{|z+m_n(z)+s(z)|^2}\le \frac 1{n^2v^2}.
\end{equation}
Furthermore, we observe that, 
\begin{align}
\frac1{|z+s(z)+m_n(z)|}&\le\frac1{|z+s(z)+m_n^{(j)}(z)|}(1+
\frac{|\varepsilon_{j4}|}{|z+s(z)+m_n(z)|} ).
\end{align}
Therefore, by Lemmas \ref{basic8} and  \ref{lemG}, for $z\in\mathbb G$,
\begin{align}\label{lar1}
\frac1{|z+s(z)+m_n(z)|}&\le\frac C{|z+s(z)+m_n^{(j)}(z)|}.
\end{align}
Applying inequality \eqref{lar1}, we may write
\begin{align}
 \E\frac{|T_{n\nu}|^2}{|z+m_n(z)+s(z)|^2}\le \frac1n\sum_{j=1}^n\E\frac{|\varepsilon_{j\nu}|^2|R_{jj}|^2}{|z+s(z)+m_n^{(j)}(z)|^2}.
\end{align}
Applying Cauchy -- Schwartz inequality and Lemma \ref{lem00}, we get
\begin{align}
 \E\frac{|T_{n\nu}|^2}{|z+m_n(z)+s(z)|^2}\le \frac C{n|z^2-4|^{\frac12}}\sum_{j=1}^n\E^{\frac12}\frac{|\varepsilon_{j\nu}|^4}{|z+m_n^{(j)}(z)+s(z)|^2}
 \E^{\frac12}|R_{jj}|^4.
\end{align}
Using now Corollary \ref{corgot}, inequality \eqref{corgot1} and Corollary \ref{cor8}, we get for $\nu=1,2,3$
\begin{align}\label{lll2}
 \E\frac{|T_{n\nu}|^2}{|z+m_n(z)+s(z)|^2}\le \frac C{nv|z^2-4|^{\frac12}}.
\end{align}
Inequalities \eqref{ll2}, \eqref{tn4} and \eqref{lll2} together complete the proof.
Thus Lemma \ref{lam1} is proved.
\end{proof}



\end{document}